\begin{document}
	\begin{frontmatter}
	\title{Spatially Adaptive Online Prediction of Piecewise Regular Functions}              
	\runtitle{Online Prediction of Piecewise Regular Signals}  
	\runtitle{Online Prediction of Piecewise Regular Signals\;\;\;\;\;\;\;}

	\begin{aug}
		\author{\fnms{Sabyasachi} \snm{Chatterjee}\ead[label=e1]{sc1706@illinois.edu}}\footnote{{Supported by NSF Grant DMS-1916375}}
		\and
		\author{\fnms{Subhajit} \snm{Goswami}\ead[label=e2]{goswami@math.tifr.res.in}}\footnote{{Supported by the SERB grant SRG/2021/000032 and a grant from the Infosys Foundation }}
		\blfootnote{Author names are sorted alphabetically.}
		
		\runauthor{Chatterjee, S. and Goswami S.}
		
		\affiliation{University of Illinois at Urbana-Champaign and Tata Institute of Fundamental Research 
		}

		\address{117 Illini Hall\\
			Champaign, IL 61820 \\
			\href{mailto:sc1706@illinois.edu}{sc1706@illinois.edu}\\
			\phantom{as}}
		\address{1, Homi Bhabha Road\\
			Colaba, Mumbai 400005, India\\
			\href{mailto:goswami@math.tifr.res.in}{goswami@math.tifr.res.in}\\
			\phantom{ac}}
		
	\end{aug}

\begin{abstract}
	We consider the problem of estimating piecewise regular functions in an online setting, i.e., the data arrive 
	sequentially and at any round our task is to predict the value of the true function at the next revealed point 
	using the available data from past predictions. We propose a suitably modified version of a recently 
	developed online learning algorithm called the sleeping experts aggregation algorithm. We show that this 
	estimator satisfies oracle risk bounds simultaneously for all local regions of the domain. 
	As concrete instantiations of the expert aggregation algorithm proposed here, we study an online mean 
	aggregation and an online linear regression aggregation algorithm where experts correspond to the set of 
	dyadic subrectangles of the domain. The resulting algorithms are near linear time computable in the 
	sample size. We specifically focus on the performance of these online algorithms in the context of 
	estimating piecewise polynomial and bounded variation function classes in the fixed design setup. The 
	simultaneous oracle risk bounds we obtain for these estimators in this context provide new and improved 
	(in certain aspects) guarantees even in the batch setting and are not available for the state of the art batch 
	learning estimators. 
\end{abstract}

\begin{keyword}
	Online Prediction, Spatial/Local Adaptivity, Adaptive Regret, Piecewise Polynomial Fitting, Bounded Variation Function Estimation, Oracle Risk Bounds.
\end{keyword}
\end{frontmatter}

\section{Introduction}\label{sec:intro}

In this paper we revisit the classical problem of estimating piecewise regular functions from noisy 
evaluations. The theory discussed here is potentially useful for a general notion of piecewise regularity 
although we will specifically give attention to the problem of estimating 
piecewise constant or piecewise polynomial functions of a given degree $m \geq 0$ and bounded variation 
functions which are known to be well approximable by such piecewise constant/polynomial functions. A 
classical aim is to design adaptive estimators which adapt optimally to the (unknown) number of pieces of 
the underlying piecewise constant/polynomial function. For example, if the true signal is (exactly or close to) 
a piecewise constant function with unknown number of pieces $k$, then it 
is desirable that the estimator attains a near parametric $\tilde{O}(k/N)$ rate of convergence ($N$ is the 
sample size, $\tilde{O}(\cdot)$ indicates ``upto some {\em fixed} power of $\log N$''), for all possible values of $k.$ This desired notion is often shown by demonstrating that an 
estimator satisfies the so called oracle risk bound trading off a squared error approximation term and a 
complexity term.  Several nonparametric regression estimators, such as Wavelet Shrinkage 
\cite{donoho1994ideal},~\cite{donoho1998minimax}, Trend 
Filtering~\cite{mammen1997locally},~\cite{tibshirani2014adaptive},~\cite{tibshirani2020divided}, Dyadic 
CART~\cite{donoho1997cart}, Optimal Regression Tree~\cite{chatterjee2019adaptive}, are known to attain 
such an oracle risk bound in the context of estimating piecewise constant/polynomial functions. In this 
context, two natural questions arise which we address in this paper.

\begin{enumerate}
\item \textbf{Q1}: Consider the online version of the problem of estimating piecewise constant or piecewise polynomial functions, i.e.,	 the data arrive sequentially and at any round our task is to predict the value of the true function at the next revealed point using the available data from past predictions. \textit{Does there exist an estimator which attains an oracle risk bound (similar to what is known in the batch learning setting) in the online setting}? This seems a natural and perhaps an important question given applications of online learning to forecasting trends.

\medskip

\item  \textbf{Q2}: The oracle risk bounds available for batch learning estimators in the literature imply a 
notion of adaptivity of the estimator. This adaptivity can be thought of as a global notion of adaptivity as the 
risk bound is for the entire mean sum of squared error of the estimator. If it can be shown that an estimator 
satisfies an oracle risk bound locally, simultaneously over several subregions of the domain, then this will 
imply a local/spatial notion of adaptivity. We explain this more in Section~\ref{sec:adap}. This gives rise to 
our second question.~\textit{Does there exist an estimator which attains an oracle risk bound simultaneously 
	over several subregions of the domain in the online setting?}. Even in the batch learning setting, it is not 
known whether state of the art estimators such as Wavelet Shrinkage, Trend Filtering, Dyadic CART, Optimal 
Regression Tree satisfy such a simultaneous oracle risk bound. 
\end{enumerate}

The main purpose of this paper is to recognize, prove and point out that by using a suitably modified version of an online aggregation algorithm developed in the online learning community, it is possible to answer both the above questions in the affirmative. 


\subsection{\textbf{Problem Setting}}

Throguhout this paper, we will work with regression or signal denoising in the fixed lattice design setup 
where the underlying domain is  a $d$ dimensional grid $L_{d,n} = [n]^d \eqqcolon \{1, \ldots, n\}^d$. Here $n$ can 
be thought of as the sample size per dimension and the total sample size can be thought of as $N = n^d.$ All 
of what we do here is meaningful in the regime where $d$ is moderately low and fixed but $n$ is large. The 
specific dimensions of interest are $d = 1, 2$ or $3$ which are relevant for sequence, image or video denoising or forecasting respectively.

We will focus on the problem of noisy online signal prediction. Let $K$ denote the $d$-dimensional grid 
$L_{d, n}$ and we abbreviate $|K| = N$. Suppose $\theta^* \in \R^K$ is the true underlying 
signal and
\begin{equation}\label{eq:model}
y = \theta^* + \sigma \epsilon
\end{equation}
where $\sigma > 0$ is unknown and $\epsilon$ consists of independent, mean zero sub-Gaussian 
entries with unit dispersion factor.

Consider the following online prediction protocol. At round $t \in [1:N]$,
\begin{itemize}

\item An adversary reveals a  index $\rho(t) \in K$ such that $\rho(t) \notin \{\rho(1),\dots,\rho(t - 1)\}.$

\item Learner predicts $\hat{\theta}_{\rho(t)}$.

\item Adversary reveals $y_{\rho(t)} = \theta^*_{\rho(t)} + \epsilon_{\rho(t)}$, a noisy version of $\theta^*_{\rho(t)}.$
\end{itemize}

Note that $\rho$ turns out to be a possibly adversarially chosen permutation of the entries of $K$ (see the 
paragraph preceding Theorem~\ref{thm:generic_online}). At the end of $N$ rounds, the predictions of the learner are measured with the usual expected mean squared loss criterion given by 
$${\rm MSE}(\hat{\theta},\theta^*) \coloneqq \frac{1}{N} \, \E \|\hat{\theta} - \theta^*\|^2.$$

\begin{remark}
Clearly, this online setting is a harder problem than batch learning where we get to observe the whole array $y$ at once and we need to estimate $\theta^*$ by denoising $y.$ Therefore, any online learning algorithm can also be used in the batch learning setup as well.
\end{remark}

\subsection{\textbf{A Definition of Spatial Adaptivity}}\label{sec:adap}
In nonparametric function estimation, the notion of spatial/local adaptivity for an estimator is a highly desirable property. Intuitively, an estimator is  spatially/locally adaptive if it adapts to a notion of complexity of the underlying true function locally on every part of the spatial domain on which the true function is defined. It is accepted that wavelet shrinkage based estimators, trend filtering estimators or optimal decision trees are spatially adaptive in some sense or the other. However, the meaning of spatial adaptivity varies quite a bit in the literature. It seems that there is no universally agreed upon definition of spatial adaptivity. In this section, we put forward one way to give a precise definition of spatial adaptivity which is inspired from the literature on \textit{strongly adaptive online algorithms} (see, e.g., ~\cite{daniely2015strongly},~\cite{ adamskiy2012closer},~\cite{hazan2007adaptive}) developed in the online learning community. 
One of the goals of this paper is to convince the reader that with a fairly simple analysis of the proposed online learning algorithm it is possible to establish this notion of spatial adaptivity (precisely defined below) in a very general setting. 

Several batch learning estimators $\hat{\theta}$ satisfy the so called oracle risk bounds of the following form:
$${\rm MSE}(\hat{\theta},\theta^*) \leq \frac{1}{N} \inf_{\theta \in \R^N} \left(\|\theta - \theta^*\|^2 +\sigma^2 k_{{\rm comp}}(\theta) p(\log n)\right)$$
where $k_{{\rm comp}}$ denotes a complexity function defined on the vectors in $\R^m$ and $p(\log n)$ is some power of $\log n$. This is a risk bound which implies that the estimator $\hat{\theta}$ adapts to 
the complexity of the true signal $k_{{\rm comp}}(\theta).$

For example, optimal decision trees such as the Dyadic CART and the ORT estimator satisfy an oracle risk 
bound (see Section $7$ in~\cite{donoho1997cart} and Theorem $2.1$ in~\cite{chatterjee2019adaptive}),  so does the Trend Filtering estimator (Remark $3.1$ in~\cite{guntuboyina2020adaptive}) and more classically, the wavelet shrinkage based estimators (see Section $1.5$ in~\cite{donoho1994ideal}). Here, the complexity function $k_{{\rm comp}}$ is directly proportional to the number of constant/polynomial pieces for univariate functions. For multivariate functions, $k_{{\rm comp}}$ is still proportional to the number of constant/polynomial pieces; measured with respect to an appropriate class of rectangular partitions of the domain, see~\cite{chatterjee2019adaptive}. 


However, from our point of view, this type of oracle risk bound, while being highly desirable and guaranteeing adaptivity against  the complexity function $k_{{\rm comp}}$, is still a global adaptivity bound as the bound is for the mean squared error of the whole signal $\theta^*.$ A good notion of local/spatial adaptivity should reveal the adaptivity of the estimator to the local complexity of the underlying signal. This naturally motivates us to make the following definition of spatial adaptivity.

\textit{We say that an estimator $\hat{\theta}$ is spatially adaptive with respect to the complexity parameter $k_{{\rm comp}}$ and with respect to a class $\mathcal{S}$ of subregions or subsets of the domain $L_{d,n}$ if the following risk bound holds simultaneously for every $S \in \mathcal{S}$}, 
$${\rm MSE}(\hat{\theta}_{S},\theta^*_{S}) \leq \frac{1}{|S|} \inf_{\theta \in \R^{S}} \left(\|\theta - \theta^*_{S}\|^2 +\sigma^2 k_{{\rm comp}}(\theta) \:p(\log n)\right).$$

The above definition of spatial adaptivity makes sense because if the above holds simultaneously for every $S \in \mathcal{S}$, then the estimator $\hat{\theta}$ estimates at $\theta^*$ locally on $S$ with a rate of convergence that depends on the local complexity $k_{{\rm comp}}(\theta^*_{S}).$ We will prove that our proposed online learning estimator is spatially adaptive in the sense described above with respect to a large class of subregions $S.$

\subsection{\textbf{Summary of Our Results}}

\begin{enumerate}
\item We formulate a slightly modified version of the so called sleeping experts aggregation algorithm for a general class of experts and a general class of comparator signals. We then state and prove a general simultaneous oracle risk bound for the proposed online prediction algorithm; see Theorem~\ref{thm:generic_online}. This is the main result of this paper and is potentially applicable to several canonical estimation/prediction settings. 
\smallskip

\item We specifically study an online mean aggregation algorithm as a special instance of our general 
algorithm and show that it satisfies our notion of spatial adaptivity (see Theorem~\ref{thm:pcconst}) with 
respect to the complexity parameter that counts the size of the minimal rectangular partition of the domain 
$L_{d,n}$ on which the true signal $\theta^*$ is piecewise constant. Even in the easier offline setting, natural 
competitor estimators like Dyadic CART and ORT are not known to satisfy our notion of spatial adaptivity. 
Equipped with the spatially adaptive guarantee we proceed to demonstrate that this online mean aggregation 
algorithm  also attains spatially adaptive minimax rate optimal bounds (see 
Theorem~\ref{thm:TV_slow_rate}) for the bounded variation function class in general dimensions. This is 
achieved by combining Theorem~\ref{thm:pcconst} with known approximation theoretic results. Such 
a spatially adaptive guarantee as in Theorem~\ref{thm:TV_slow_rate} is not known to hold for the TV 
Denosing estimator, the canonical estimator used for estimating bounded variation functions.

\smallskip

\item We then study an online linear regression aggregation algorithm based on the Vovk-Azoury-Warmouth (VAW) forecaster (see~\cite{vovk1998competitive},~\cite{azoury2001relative}) as another instantiation of our 
general algorithm. We show that this algorithm satisfies our notion of spatial adaptivity (see 
Theorem~\ref{thm:pcpoly}) with respect to the complexity parameter which counts the size of the minimal 
rectangular partition of the domain $L_{d,n}$ on which the true signal $\theta^*$ is piecewise polynomial of 
any given fixed degree $ \geq 1$. Like in the case with piecewise constant signals discussed above, natural competitor estimators such as Trend Filtering or higher order versions of Dyadic CART are not known to satisfy our notion of spatial adaptivity even in the easier offline setting. We then demonstrate that this online 
linear regression aggregation algorithm also attains spatially adaptive minimax rate optimal bounds (see 
Theorem~\ref{thm:trendfilter_slow_rate}) for univariate higher order bounded variation functions. This is 
again achieved by combining Theorem~\ref{thm:pcpoly} with known approximation theoretic results. Such a 
spatially adaptive guarantee as in Theorem~\ref{thm:trendfilter_slow_rate} is not known to hold for the state 
of the art Trend Filtering estimator.
\end{enumerate}

\subsection{\textbf{Closely Related Works}}
In a series of recent papers~\cite{baby2019online},~\cite{baby2020adaptive},~\cite{baby2021optimal},~\cite{baby2021optimal2}, the authors there have studied online estimation of univariate bounded variation and piecewise polynomial signals. In particular, the paper~\cite{baby2021optimal} 
brought forward sleeping experts aggregation algorithms~\cite{daniely2015strongly} in the context of predicting univariate bounded variation functions. These works have been a source of inspiration for this current paper.

In a previous paper~\cite{chatterjee2019adaptive} of the current authors, offline estimation of piecewise 
polynomial and bounded variation functions were studied with a particular attention on obtaining adaptive 
oracle risk bounds. The estimators considered in that paper were optimal decision trees such as Dyadic 
CART~(\cite{donoho1997cart}) and related variants. After coming across the paper~\cite{baby2021optimal} 
we realized that by using sleeping experts aggregation algorithms, one can obtain oracle risk bounds 
in the online setting which would then be applicable to online estimation of piecewise polynomial and 
bounded variation functions in general dimensions. In this sense, this work focussing on the online problem is a natural follow up of 
our previous work in the offline setting.

The main point of difference of this work with the 	papers~\cite{baby2019online},~\cite{baby2020adaptive},~\cite{baby2021optimal} is that here we formulate a 
general oracle risk bound that works simultaneously over a collection of subsets of the underlying domain 
(see Theorem~\ref{thm:generic_online}).  We then show that this result can be used in conjunction with some approximation theoretic results (proved 
in~\cite{chatterjee2019adaptive}) to obtain 
spatially adaptive near optimal oracle risk bounds for piecewise constant/polynomial and bounded variation functions in general dimensions. To the best of our 
understanding, the papers ~\cite{baby2019online},~\cite{baby2020adaptive},~\cite{baby2021optimal} have 
not addressed function classes beyond the univariate case, nor do they address the oracle risk bounds for 
piecewise constant/polynomial functions. But {\em most importantly}, it appears that our work is the first, in 
the online setting, to formulate a simultaneous oracle risk bound as in Theorem~\ref{thm:generic_online} and realize that one can deduce from this near optimal risk bounds for several function classes of recent interest. 
We hope that Theorem~\ref{thm:generic_online} will find applications for several other function classes (see~Section~\ref{sec:otherfunctons} below).


\section{Aggregation of Experts Algorithm}\label{sec:algo}
In this section, we describe our main prediction algorithm. Our algorithm is a slight modification of the so called {\em Strongly Adaptive} online algorithms 
discussed in~\cite{hazan2007adaptive},~\cite{adamskiy2012closer},~\cite{daniely2015strongly}.

In this section $K$ could be any general finite domain like $L_{d,n}$. An \textit{expert} will stand for a set $S \subset K$ 
equipped with an online rule defined on $S$ where, by an {\em online rule} $r^{(S)}$ 
corresponding to $S$, we mean a collection of (measurable) maps $r^{(S)}_{U,s}: \R^{U} 
\rightarrow \R$ indexed by $U \subset S$ and $s \in S \setminus U$. 
Operationally, the expert corresponding to a subset $S \in \mathcal{S}$ containing $\rho(t)$ predicts at the 
revealed point $\rho(t)$ the number given by 
\begin{equation}\label{eq:expertrule}
	\hat{y}^{(S)}_{\rho(t)} = r^{(S)}_{\rho[1:(t - 1)] \cap S,\, \rho(t)} \left(y_{\rho[1:(t - 1)] \cap S}\right).
\end{equation} The display \eqref{eq:expertrule} defines a vector $\hat{y}^{(S)} \in \R^{S}$ containing the 
predictions of the expert corresponding to the subset $S$. A {\em family} of experts 
corresponds to a sub-collection $\mathcal{S}$ of subsets of $K$. For any choice of online rules for every $S 
\in \mathcal S$, we refer to them collectively as an online rule $\mb r$ associated to $\mathcal S$.

As per the protocol described in Section~\ref{sec:intro}, at the beginning of any round $t \in 
[N]$ where $|K| = N$, the data index $\rho(t)$ is revealed. At this point, the experts corresponding to 
subsets $S \in \mathcal{S}$ either not containing $\rho(t)$ or not having any data index revealed previously, 
become inactive. All other experts provide a prediction of their own. 

We are now ready to describe our aggregation algorithm $\mathcal A$ for a set of experts 
$\mathcal S.$ The input to this algorithm is the data vector $y \in \R^{K}$ which is revealed 
sequentially in the order given by the permutation $\rho$. We denote the output of this 
algorithm here by $\hat{y} \in \R^{K}$. At each round $t \in [N]$, the algorithm outputs a 
prediction $\hat{y}_{\rho(t)}$. Below and in the rest of the article, we use $T_{a}(x)$ to 
denote the truncation map $T_{a}(x) = \min\{\max\{x,-a\},a\}$.

\begin{tabular}{p{15cm}l}
	\hline
	{\bf Aggregation algorithm.} Parameters - subset of experts $\mathcal S$, online rule $\mathbf r = \{r^{(S)} : S \in \mathcal S\}$
	and truncation parameter $\lambda > 1$\\ 
	\hline
	\\
	Initialize $w_{S, 1} = \frac{1}{|\mathcal S|}$ for all $S \in \mathcal S$.\\
	For $t = 1, \ldots, N$:
	\begin{enumerate}
		\item Adversary reveals $\rho(t)$.
		
		\item  Choose a set of {\em active experts} $A_t$ as $$\{S \in \mathcal S: \rho(t) \in S\mbox{ and }\rho(t') \in S \mbox{ for some }t' < t\}$$
		if $t > 1$ and $\{S \in \mathcal S: \rho(t) \in S\}$ if $t = 1$.
		
		\item Predict $\hat{y}_{\rho(t)} = \sum_{S \in A_t}\hat w_{S, t}\, T_{\lambda}(\hat{y}^{(S)}_{\rho(t)})$ where $\hat w_{S, t} \coloneqq \frac{w_{S, t}}{\sum_{S\in A_t} w_{S, t}}$. 
		\item  Update $w_{S, t}$'s so that $w_{S, t+1} = w_{S, t}$ for $S\notin  A_t$ and 
		$$w_{S, t+1} = \frac{w_{S, t}\,\e^{-\alpha\, \ell_{S, t}}}{\sum_{S \in A_t}\hat w_{S, t}\, \e^{-\alpha\, \ell_{S, t}}} = \frac{w_{S, t}\,\e^{-\alpha\, \ell_{S, t}}}{\sum_{S \in A_t} w_{S, t}\, \e^{-\alpha\, \ell_{S, t}}} \, \sum_{S \in A_t} w_{S, t}$$
		otherwise, where $\ell_{S, t} \coloneqq \big(T_{\lambda}(y_{\rho(t)}) - T_{\lambda}(\hat{y}^{(S)}_{\rho(t)})\big)^2$ and $\alpha \coloneqq \frac{1}{8 \lambda^2}$.
	\end{enumerate}\\
	\hline
\end{tabular}

\begin{remark}
This algorithm is similar to the sleeping experts aggregation algorithm discussed 
in~\cite{daniely2015strongly} except that we apply this algorithm after truncating the data by the map 
$T_{\lambda}.$ Since, we are interested in (sub-Gaussian) unbounded  errors, our data vector $y$ need not 
be bounded which necessitates this modification. See Remark~\ref{remark:aggregrate_regret_trun1} below.
\end{remark}

In the sequel we will refer to our aggregation algorithm as $\mathcal A(\mathbf r, \mathcal S, \lambda)$. The 
following proposition guarantees that the performance of the above algorithm is not much worse as 
compared to the performance of any expert $S \in \mathcal{S}$; for any possible input data $y$.

\begin{proposition}[error comparison against individual experts for arbitrary data]\label{prop:aggregate_regret} 
For any ordering $\rho$ of $K$ and $S \in \mathcal S$, we have
	\begin{equation}\label{eq:aggregate_regret}
	\sum_{s \in S} \big(y_s - \hat{y}_s\big)^2\, \le \sum_{s \in S}
	\big(y_{s} - \hat y^{(S)}_{s} \big)^2 + 8 \lambda^2 \log \e|\mathcal S| + 2\|y_S - 
	\Pi_{\lambda} y_S\|^2 + 4 \lambda^2 \sum_{\substack{s \in S}} \mathrm{1}\left(|y_{s}| > \lambda\right),
\end{equation}
	where $\Pi_{\lambda} z$, for any vector $z \in \R^{A}$ with $|A| < \infty$, denotes the 
$\ell^2$-projection of $z$ onto the $\ell^\infty$-ball of radius $\lambda$, i.e., $(\Pi_{\lambda} z)_a = T_{\lambda}(z_a)$ for all $a \in A$.
\end{proposition}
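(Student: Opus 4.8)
The plan is to run the classical exponential-weights (aggregating-algorithm) analysis for exp-concave losses, adapted to the sleeping-experts bookkeeping of the update rule, first obtaining a regret bound phrased entirely in terms of the \emph{truncated} data and predictions, and then to remove the truncations at the cost of the last two terms on the right-hand side of \eqref{eq:aggregate_regret}.

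First, I would record the exp-concavity fact underlying the choice $\alpha = \frac{1}{8\lambda^2}$: for any fixed $u \in [-\lambda,\lambda]$, the function $x \mapsto \e^{-\alpha (u - x)^2}$ is concave on $[-\lambda,\lambda]$, which is a one-line second-derivative computation using $|u - x| \le 2\lambda$. Since $T_\lambda(y_{\rho(t)}) \in [-\lambda,\lambda]$ and the prediction $\hat y_{\rho(t)} = \sum_{S \in A_t} \hat w_{S,t}\, T_\lambda(\hat y^{(S)}_{\rho(t)})$ is a $\hat w_{\cdot,t}$-convex combination of points $T_\lambda(\hat y^{(S)}_{\rho(t)}) \in [-\lambda,\lambda]$, Jensen's inequality gives, with $\hat \ell_t \coloneqq (T_\lambda(y_{\rho(t)}) - \hat y_{\rho(t)})^2$,
$$\e^{-\alpha \hat \ell_t} \ \ge\ \sum_{S \in A_t} \hat w_{S,t}\, \e^{-\alpha \ell_{S,t}}.$$
Next, I would track the individual weights. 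Combining the display above with the update rule shows $w_{S,t+1} \ge w_{S,t}\, \e^{\alpha(\hat\ell_t - \ell_{S,t})}$ whenever $S \in A_t$, while $w_{S,t+1} = w_{S,t}$ otherwise; one also checks that the (slightly unusual) renormalization in the update preserves the total mass, $\sum_{S \in \mathcal S} w_{S,t} \equiv 1$, so that $0 \le w_{S,N+1} \le 1$ for each $S$. Telescoping the multiplicative inequality over the rounds at which $S$ is active and using $w_{S,1} = 1/|\mathcal S|$ then yields
$$\sum_{t \,:\, S \in A_t} \hat\ell_t \ \le\ \sum_{t \,:\, S \in A_t} \ell_{S,t} \ +\ \frac{\log |\mathcal S|}{\alpha} \ =\ \sum_{t \,:\, S \in A_t} \ell_{S,t} \ +\ 8\lambda^2 \log |\mathcal S|.$$

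The set $\{t : S \in A_t\}$ equals $\{t : \rho(t) \in S\}$ with the first such round removed, except when that round is $t = 1$; at that single possibly-missing round one has $\hat\ell_t \le 4\lambda^2$ because both $T_\lambda(y_{\rho(t)})$ and $\hat y_{\rho(t)}$ lie in $[-\lambda,\lambda]$, and adding this term back turns $8\lambda^2 \log|\mathcal S|$ into $8\lambda^2 \log(\e |\mathcal S|)$ and lets the sum run over all of $S$, giving $\sum_{s \in S} (T_\lambda(y_s) - \hat y_s)^2 \le \sum_{s \in S} (T_\lambda(y_s) - T_\lambda(\hat y^{(S)}_s))^2 + 8\lambda^2\log(\e|\mathcal S|)$. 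Finally I would remove the truncations: on the right-hand side this is free, since $T_\lambda$ is $1$-Lipschitz and hence $(T_\lambda(y_s) - T_\lambda(\hat y^{(S)}_s))^2 \le (y_s - \hat y^{(S)}_s)^2$; on the left-hand side, using $|\hat y_s| \le \lambda$ and the elementary inequality $(y_s - \hat y_s)^2 \le (T_\lambda(y_s) - \hat y_s)^2 + 2(y_s - T_\lambda(y_s))^2 + 4\lambda^2$, valid whenever $|y_s| > \lambda$ (it reduces to $(|y_s| - 3\lambda)^2 \ge 0$), and summing the excess over $\{s \in S : |y_s| > \lambda\}$ produces exactly the terms $2\|y_S - \Pi_\lambda y_S\|^2$ and $4\lambda^2 \sum_{s \in S} \mathrm{1}(|y_s| > \lambda)$. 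Chaining the three inequalities gives \eqref{eq:aggregate_regret}.

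The routine parts here are the exp-concavity computation and the exponential-weights telescoping. The points that need care — and where I would expect the only real friction — are the sleeping-experts bookkeeping (verifying that the renormalized update simultaneously preserves total mass and still telescopes cleanly, and correctly accounting for the at-most-one round on which $S$ sits out), and the de-truncation on the algorithm's side: a naive application of $(a+b)^2 \le 2a^2 + 2b^2$ there would inflate the leading term by a factor of $2$, so one must use the sharp bound coming from $(|y_s| - 3\lambda)^2 \ge 0$ to keep the coefficient of $\sum_{s\in S}(y_s - \hat y^{(S)}_s)^2$ equal to one.
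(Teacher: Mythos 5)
Your proof is correct and follows essentially the same route as the paper's: the exp-concavity/Jensen step with $\alpha = 1/(8\lambda^2)$, the telescoping of the renormalized sleeping-experts weights using $w_{S,1}=1/|\mathcal S|$ and $w_{S,N+1}\le 1$, the $4\lambda^2$ charge for the single inactive round, and the identical de-truncation inequalities $(x-z)^2 \le (T_\lambda(x)-z)^2 + 2(T_\lambda(x)-x)^2 + 4\lambda^2\,\mathrm{1}(|x|>\lambda)$ and $(T_\lambda(x)-T_\lambda(z))^2\le(x-z)^2$. The only (cosmetic) difference is that you telescope multiplicative weight inequalities rather than log-ratios and explicitly absorb the $4\lambda^2$ into the $\e$ of $\log \e|\mathcal S|$, which if anything makes the constant accounting slightly cleaner than the paper's.
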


\begin{remark}\label{remark:aggregrate_regret_trun0}
	A remarkable aspect of Proposition~\ref{prop:aggregate_regret} is that it holds for any input data $y \in 
	\R^{L_{d,n}}$. In particular, no probabilistic assumption is necessary. In the terminology of online learning, 
	this is said to be a prediction bound for individual sequences. Usually, such an individual sequence prediction 
	bound is stated for bounded data; see, e.g., \cite{hazan2007adaptive}, ~\cite{daniely2015strongly}. On the other hand, Proposition~\ref{prop:aggregate_regret} holds for any data because we have introduced a truncation parameter in our aggregation algorithm.
\end{remark}

\begin{remark}\label{remark:aggregrate_regret_trun1}
The effect of truncation is clearly reflected  in the last two terms of \eqref{eq:aggregate_regret}. The 
issue of unbounded data points in the \textbf{noisy} setup was dealt earlier in the literature --- see, e.g., 
\cite[Theorem~5]{2021optimal} --- by choosing a value of $\lambda$ so that \textbf{all} the 
datapoints lie within the interval $[-\lambda, \lambda]$ with some prescribed (high) probability $1 - \delta$. 
The comparison bounds analogous to \eqref{eq:aggregate_regret} (without the last two terms) then hold on this 
high probability event. However, the issue of how to choose $\lambda$ such that \textbf{all} the 
datapoints lie within the interval $[-\lambda, \lambda]$ is not trivial to resolve unless one knows something about the data generating mechanism. There is a simple way to get around this problem in the offline version by 
setting $\lambda = \max_{j \in [N]} |y_j|$ (see \cite[Remark~8]{2021optimal}) which is obviously \textbf{not} 
possible in the online setting.  Our version of the algorithm and the accompanying 
Proposition~\ref{prop:aggregate_regret} provide an explicit bound on the error due to truncation for arbitrary 
$y \in \R^K$ in the online problem. To the best of our knowledge, such a bound was not available in the literature in the current setup. An operational implication of Proposition~\ref{prop:aggregate_regret} is that 
even if a few data points exceed $\lambda$ in absolute value by not too great a margin, we still get effective 
risk bounds. 
\end{remark}


\begin{proof}
The proof is similar to the proof of regret bounds for exponentially weighted average forecasters with 
exp-concave loss functions (see, e.g., \cite{hazan2007adaptive}, \cite{cesa2006prediction}). However, we 
need to take some extra care in order to deal with our particular activation rule (see step~2) and obtain the 
error terms as in \eqref{eq:aggregate_regret}.

	Let us begin with the observation that the function $\e^{-\eta (x - z)^2}$, where 
	$\eta > 0$, is concave in $x$ for all $x, z \in [- 1/\sqrt{8\eta}, 1 / \sqrt{8\eta}]$. Clearly, 
	this condition is satisfied for $\eta = \alpha$ and all $x, z \in [-\lambda, \lambda]$. 
	Therefore, since $\hat y_{\rho(t)}$ is an average of $T_{\lambda}(\hat y^{(S)}_{\rho(t)})$'s (which by definition lie in $[-\lambda, \lambda]$) 
	with respective weights $\hat w_{S, t}$ (see step~$3$ in the algorithm), we can write using the Jensen's inequality,
	\begin{align*}
		\exp\left(-\alpha\left(\hat y_{\rho(t)} - T_{\lambda}(y_{\rho(t)})\right)^2\,\right) \ge \sum_{S\in A_t} \hat w_{S, t}\,\e^{-\alpha \ell_{S, t}}
	\end{align*}
	where we recall that $\ell_{S, t} \coloneqq \big(T_{\lambda}(y_{\rho(t)}) - T_{\lambda}(\hat y^{(S)}_{\rho(t)})\big)^2.$

	Fix a subset $S \in \mathcal S$ such that $S \in A_t$. Taking logarithm on both sides and using the 
	particular definition of updates in step~4 of 
	$\mathcal A$, we get
	\begin{equation*}
		\left(\hat y_{\rho(t)} - T_{\lambda}(y_{\rho(t)})\right)^2  - \ell_{S, t} \, \le  \, \alpha^{-1} \log \left( \frac{\e^{-\alpha \ell_{S, t}}}{\sum_{S' \in \mathcal S} \hat w_{S', t} \e^{-\alpha \ell_{S', 
					t}}}
				\right) = \alpha^{-1}\log \left(\frac{w_{S, t + 1}}{w_{S, t}}\right).
	\end{equation*}
However, since $w_{S, t + 1} = w_{S, t}$ and hence the logarithm is 0 whenever $S \not \in A_t$ (see step~4 
in the algorithm), we can add the previous bound over all $t$ such that $S \in A_t$ to deduce:
	\begin{equation*}
		\sum_{t: S  \in A_t} \left(\hat y_{\rho(t)} - T_{\lambda}(y_{\rho(t)})\right)^2 \le \sum_{t: S \in A_t} 
		\ell_{S, t} \, + \, \alpha^{-1} \sum_{t \in [N]} \log \left(\frac{w_{S, t + 1}}{w_{S, 
				t}}\right) \le \sum_{t: S \in A_t} \ell_{S, t} \, + \, \alpha^{-1} \log \e |\mathcal S|
	\end{equation*}
where in the final step we used the fact that $w_{S, 1} = \frac{1}{|\mathcal S|}$ and $w_{S, N+1} \le 1$. Now it 
follows from our activation rule in step~2 that $S \setminus \{t: S \in A_t\}$ is at most a singleton and hence
\begin{equation*}
\sum_{t: S \ni \rho(t)} \left(\hat y_{\rho(t)} - T_{\lambda}(y_{\rho(t)})\right)^2	\le \sum_{t: S \in A_t}  
\ell_{S, t} + 4 \lambda^2 + \alpha^{-1} \log \e |\mathcal S|
\end{equation*}
where we used the fact that both $\hat y_{\rho(t)}$ and $T_{\lambda}(y_{\rho(t)})$ lie in $[-\lambda, \lambda]$. We can now conclude the proof from the above display by plugging $x = y_{\rho(t)}$ and $z = 
\hat y_{\rho(t)}$ into 
	$$(x - z)^2 \le (T_{\lambda}(x) - z)^2 + 2 (T_{\lambda}(x) - x)^2 + 4 \lambda^2 \mathrm1({|x| > \lambda}), \,\, \mbox{ when } |z| \le \lambda$$ and also $x = y_s$ and $z = \hat y_s^{(S)}$ into $(T_{\lambda}(x) - T_{\lambda}(z))^2 \le (x - z)^2$ upon recalling  the fact that $|\hat y_{s}| \leq \lambda$ for all $s \in K$.
\end{proof}

\section{A General Simultaneous Oracle Risk Bound} \label{sec:main_result}
In this section we will state a general simultaneous oracle risk bound for online prediction of noisy signals. As in the last section, in this section also $K$ refers to any arbitrary finite domain. Recall from our setting laid out in the introduction that we observe a data vector $y \in \R^{K}$ in some order where we can write
$$y = \theta^*+ \sigma \ep$$ 
where $\sigma > 0$ is unknown and $\epsilon_t$'s are independent, mean zero sub-Gaussian variables with 
unit dispersion factor. For specificity, we assume in the rest of the paper that
\begin{equation*}
\max(\P[\ep_t \ge x ], 	\P[\ep_t \le -x ]) \le  2 \e^{-x^2 /2 },\: \:   \mbox{for all $x \ge 0$ and $t \in K$.}
\end{equation*}
Let us emphasize that the constant $2$ is arbitrary and changing the constant would only impact the 
absolute constants in our main result, i.e., Theorem~\ref{thm:generic_online} below . Our focus here is on 
estimating signals $\theta^*$ that are piecewise regular on certain sets as we now explain. Let $\mathcal S$ 
be a family of subsets of $K$ (cf. the family of experts $\mathcal S$ in our aggregation algorithm) and for 
each $S \in \mathcal S$, let $F_S \subset \R^S$ denote a class of functions defined on $S$.


We now define $\mathcal{P}$ to be the set of all partitions of $K$ all of whose constituent sets are elements of 
$\mathcal S$. For any partition $P \in \mathcal{P}$, define the class of signals 
\begin{equation}
\Theta_{P} = \Theta_{P}\big((F_S: S \in \mathcal S) \big) = \{\theta \in \R^K: \theta_{S} \in F_S \:\:\forall S \in P\}.
\end{equation}
In this section, when we mention a piecewise regular function, we mean a member of the set $\Theta_{P}$ for a partition $P \in \mathcal{P}$ with not too many constituent sets.

For example, in this paper we will be specifically analyzing the case when $K = L_{d,n} = [n]^d$ is 
the $d$ dimensional lattice or grid, $\mathcal S$ is the set of all (dyadic) rectangles of 
$L_{d,n}$ and $F_{S}$ is the set of polynomial functions of a given degree $m \geq 0$ on the 
rectangular domain $S$. Then, $\mathcal{P}$ becomes the set of all (dyadic) rectangular 
partitions of $L_{d,n}$ and $\Theta_{P}$ becomes the set of piecewise polynomial functions 
on the partition $P$.

Coming back to the general setting, to describe our main result, we need to define an additional quantity which is a property of the set of online rules $\{r^{(S)}: S \in \mathcal{S}\}.$ For any partition $P \in \mathcal{P}$ and any $\theta \in \Theta_{P}$, let  $\mathcal{R}(y, \theta, P) = \mathcal{R}(\mb r, y, 
\theta, P) > 0$ be defined as,
\begin{equation}\label{eq:regret_bnd} 
\mathcal{R}(y, \theta, P) = \sup_{\rho}\,\,\sum_{S \in P}\,\Big(\sum_{t: \rho(t) \in S}(y_{\rho(t)} - 
\hat{y}_{\rho(t)}^{(S)})^2  - \|y_S - \theta_{S} \|^2\Big)
\end{equation}
(recall the definition of $\hat{y}_{\rho(t)}^{(S)}$ from~\eqref{eq:expertrule}). Clearly $\mathcal{R}(y, \theta, P)$ is a random variable and we denote its 
expected value by $\overline{\mathcal{R}}(\theta,P)$.

Here is how we can interpret $\mathcal{R}(y, \theta, P).$ Given a partition $P \in \mathcal{P}$ consider the following prediction rule $r_{P}.$ For concreteness, let the partition $P = (S_1,\dots,S_k)$. At round $t$, there will be only one index $i \in [k]$ such that $\rho(t)$ is in $S_i$. Then the prediction rule $r_{P}$ predicts a value $\hat{y}^{(S_i)}_{\rho(t)}.$ In other words, $r_P$ uses the prediction of the expert corresponding to the subset $S_i$ in this round. Also, consider the prediction rule $r_{\theta}$ which at round $t$ predicts by $\theta_{\rho(t)}$ for any fixed vector $\theta \in \Theta_{P}.$ If we have the extra knowledge that the true signal $\theta^*$ indeed lies in $\Theta_{P}$ then it may be natural to use the above online learning rule $r_{P}$ if the experts are good at predicting signals (locally on the domain $S$) which lie in $F_{S}.$
\textit{We can now interpret $\mathcal{R}(y, \theta, P)$ as the excess squared loss or regret (when the array revealed sequentially is $y$) of the online rule $r_{P}$ as compared to the online rule $r_{\theta}.$}

In the sequel we use $\| \theta \|_{\infty}$ to denote the $\ell_\infty$-norm of the vector  $\theta$. We also 
extend the definitions of $\mathcal{P}$ and $\Theta_P$ (see around \eqref{def:ThetaP}) to include 
partitions of {\em subsets} of $K$ comprising only sets from $\mathcal S$. In particular, for any  $\mathsf T \subset K$, 
define $\mathcal{P}_{\mathsf T}$ to be the set of all partitions $P$ of $\mathsf T$ all of whose constituent 
sets are elements of $\mathcal S$. For any partition $P \in \mathcal{P}_{\mathsf T}$, define the class of 
signals 
\begin{equation}\label{def:ThetaP}
\Theta_{P} = \Theta_{P}\big((F_S: S \in \mathcal S, S \subset \mathsf T) \big) = \{\theta \in \R^{\mathsf T}: \theta_{S} \in F_S \:\:\forall S \in P\}.
\end{equation}

Let us now say a few words about the choice of ordering $\rho$ which we can generally think of as a 
stochastic process taking values in $K$. 
We call $\rho$ as {\em non-anticipating}  if, conditionally on $(\rho[1:t], y_{\rho[1:t-1]})$, $\ep_{\rho(t)}$ is distributed as $\ep_s$ on the event $\{\rho(t) = s\}$ for any $t \in [N]$. Such orderings include deterministic orderings and orderings that are independent of the data. But more generally, any ordering where $\rho(t)$ is allowed to depend on the data only through $y_{\rho[1: t-1]}$ is non-anticipating. 
In particular, an adversary can choose to reveal the next index after observing all of the past data and our actions.

We are now ready to state our general result. 
\begin{theorem}[General Simultaneous Oracle risk bound]\label{thm:generic_online}
	Let $K$ be a finite set. Fix a set of experts $\mathcal{S}$ equipped with online 
	learning rule $\textbf{r}.$ For each $S \in \mathcal S$, fix $F_S \subset \R^S$ to be a 
	class of functions defined on $S$. Suppose $y$ is generated from the 
	model~\eqref{eq:model} and is input to the algorithm 
	$\mathcal A(\mathbf r, \mathcal S, \lambda)$. Let us denote the output of the $\mathcal A(\mathbf r, 
	\mathcal S, \lambda)$ by $\hat{\theta}.$ Let $\mathsf T$ be any subset of $K.$ There exist absolute constants $c 
	\in (0, 1)$ and $C  > 1$ such that for any non-anticipating ordering $\rho$ of $K$, 
	\begin{align}\label{eq:generic_online}
		\E \|\hat y_{\mathsf T} - \theta_{\mathsf T}^* \|^2  \le \inf_{\substack{P \in {\mathcal P}_{\mathsf T},\\ \theta \in \Theta_P}} &\big( \|\theta_{\mathsf T}^* - \theta \|^2 + C \lambda^2 \,|P|\log 
		\e |\mathcal S| + \overline{\mathcal{R}}(\theta,P) \big) + C \lambda^2|\{s \in \mathsf T: 
		|\theta_s| > \lambda\}| \nonumber \\
		&
		+ C \|\theta_{\mathsf T}^* - \Pi_{\lambda}\theta_{\mathsf 
			T}^*\|^2 + C (\sigma^2 + \lambda^2) \sum_{s \in \mathsf T} \e^{-c \min\big(\frac{|\theta_s^* - \lambda|^2}{\sigma^2}, \frac{|\theta_s^{*} + \lambda|^2}{\sigma^2}\big)}.
	\end{align}
	In particular, there exists an absolute constant $C > 1$ such that for $\lambda \ge C (\sigma \sqrt{\log |\mathsf T|} \vee\|\theta^*\|_{\infty})$, one has
	\begin{align}\label{eq:generic_online2}
		{\rm MSE}(\hat \theta_{\mathsf T}, \theta_{\mathsf T}^*) \le \inf_{P \in {\mathcal P}_{\mathsf T},\, \theta \in 
			\Theta_P} \frac{1}{|\mathsf T|}&\big(\|\theta_{\mathsf T}^* - \theta \|^2 + C\, \lambda^2 \,|P|\log \e 	|\mathcal S| +
		\overline{\mathcal{R}}(\theta,P)\big) + 
		\frac{\sigma^2 + \lambda^2}{|\mathsf T|^2}\,.
\end{align}\end{theorem}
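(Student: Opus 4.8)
\emph{Proof strategy.} The plan is to obtain \eqref{eq:generic_online} by localizing Proposition~\ref{prop:aggregate_regret} to an arbitrary partition of $\mathsf T$, passing from the ``$y$-geometry'' to the ``$\theta^*$-geometry'' while killing the resulting cross term in expectation, and then controlling the two truncation-error terms coordinatewise; \eqref{eq:generic_online2} then follows by specializing $\lambda$. Recall that $\hat\theta$ is the output $\hat y$ of the algorithm, so the left side of \eqref{eq:generic_online} is $\E\|\hat y_{\mathsf T}-\theta^*_{\mathsf T}\|^2$.

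Fix $P\in\mathcal P_{\mathsf T}$ and $\theta\in\Theta_P$. Applying Proposition~\ref{prop:aggregate_regret} to each $S\in P\subset\mathcal S$ and summing over $S\in P$, and using that $P$ partitions $\mathsf T$, the left sides add to $\|y_{\mathsf T}-\hat y_{\mathsf T}\|^2$, the $\log\e|\mathcal S|$ terms to $|P|\log\e|\mathcal S|$, and the truncation terms to $\|y_{\mathsf T}-\Pi_\lambda y_{\mathsf T}\|^2$ and $\sum_{s\in\mathsf T}\mathrm{1}(|y_s|>\lambda)$. In the leading sum $\sum_{S\in P}\sum_{t:\rho(t)\in S}(y_{\rho(t)}-\hat y^{(S)}_{\rho(t)})^2$ I would add and subtract $\sum_{S\in P}\|y_S-\theta_S\|^2=\|y_{\mathsf T}-\theta\|^2$ and recognize the leftover, for the ordering $\rho$ in force, as being at most $\mathcal R(y,\theta,P)$ by the definition \eqref{eq:regret_bnd}. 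This yields
\begin{align*}
\|y_{\mathsf T}-\hat y_{\mathsf T}\|^2 &\le \|y_{\mathsf T}-\theta\|^2+\mathcal R(y,\theta,P)+8\lambda^2|P|\log\e|\mathcal S| \\
&\qquad +2\|y_{\mathsf T}-\Pi_\lambda y_{\mathsf T}\|^2+4\lambda^2\sum_{s\in\mathsf T}\mathrm{1}(|y_s|>\lambda).
\end{align*}

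Next I would substitute $y=\theta^*+\sigma\ep$ and expand both $\|y_{\mathsf T}-\hat y_{\mathsf T}\|^2$ and $\|y_{\mathsf T}-\theta\|^2$ around $\theta^*_{\mathsf T}$; the common $\sigma^2\|\ep_{\mathsf T}\|^2$ cancels, leaving the same bound for $\|\theta^*_{\mathsf T}-\hat y_{\mathsf T}\|^2$ with an extra term $2\sigma\langle\ep_{\mathsf T},\hat y_{\mathsf T}-\theta\rangle$ on the right. The conceptual crux is that this term has zero mean: writing it as $\sum_{t:\rho(t)\in\mathsf T}\ep_{\rho(t)}(\hat y_{\rho(t)}-\theta_{\rho(t)})$ and conditioning on the information available at the start of round $t$ (which contains $\rho[1:t]$ and $y_{\rho[1:t-1]}$), the factor $\hat y_{\rho(t)}-\theta_{\rho(t)}$ is measurable --- the learner commits to $\hat y_{\rho(t)}$ and knows $\rho(t)$ before $y_{\rho(t)}$ is revealed --- while the conditional mean of $\ep_{\rho(t)}$ vanishes because $\rho$ is non-anticipating. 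Taking expectations and writing $\overline{\mathcal R}(\theta,P)=\E\mathcal R(y,\theta,P)$, the remaining task is to bound $\sum_{s\in\mathsf T}\E(|y_s|-\lambda)_+^2$ and $\sum_{s\in\mathsf T}\P(|y_s|>\lambda)$ by the last three terms of \eqref{eq:generic_online}.

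This coordinatewise truncation bookkeeping is the step requiring the most care, and the key is to split on whether $|\theta^*_s|\le\lambda$. For $|\theta^*_s|\le\lambda$, the sub-Gaussian tail bounds $\P(|y_s|>\lambda)$ and, upon integrating the tail, $\E(|y_s|-\lambda)_+^2$ by an absolute constant times $e^{-(\lambda-|\theta^*_s|)^2/(2\sigma^2)}$ and $\sigma^2 e^{-(\lambda-|\theta^*_s|)^2/(2\sigma^2)}$ respectively; since $(\lambda-|\theta^*_s|)^2=\min(|\theta^*_s-\lambda|^2,|\theta^*_s+\lambda|^2)$ here, both feed into the exponential term of \eqref{eq:generic_online}. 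For $|\theta^*_s|>\lambda$, I would use the deterministic bound $(|y_s|-\lambda)_+\le\sigma|\ep_s|+(|\theta^*_s|-\lambda)$, giving $\E(|y_s|-\lambda)_+^2\le 2(|\theta^*_s|-\lambda)^2+2\sigma^2\E\ep_s^2$, charging $(|\theta^*_s|-\lambda)^2$ to $\|\theta^*_{\mathsf T}-\Pi_\lambda\theta^*_{\mathsf T}\|^2$ and the $\sigma^2$ piece either to that same term (when $|\theta^*_s|-\lambda\ge\sigma$) or to $(\sigma^2+\lambda^2)e^{-c}$ (otherwise), while the crude bound $4\lambda^2\mathrm{1}(|y_s|>\lambda)\le 4\lambda^2$ is charged to the term $C\lambda^2|\{s\in\mathsf T:|\theta^*_s|>\lambda\}|$. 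Enlarging the absolute constants assembles \eqref{eq:generic_online}. Finally, once $\lambda\ge C(\sigma\sqrt{\log|\mathsf T|}\vee\|\theta^*\|_\infty)$ with $C$ large, the terms involving $\Pi_\lambda\theta^*_{\mathsf T}$ and $\{s:|\theta^*_s|>\lambda\}$ vanish, and each summand of the exponential term is at most $e^{-c(\lambda-\|\theta^*\|_\infty)^2/\sigma^2}\le e^{-c(\lambda/2)^2/\sigma^2}\le|\mathsf T|^{-cC^2/4}$; taking the threshold $C$ large enough makes the whole exponential term of \eqref{eq:generic_online} at most $(\sigma^2+\lambda^2)/|\mathsf T|$, so that dividing \eqref{eq:generic_online} by $|\mathsf T|$ yields \eqref{eq:generic_online2}.
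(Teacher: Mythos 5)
Your proposal is correct and follows essentially the same route as the paper: sum the individual-sequence bound of Proposition~\ref{prop:aggregate_regret} over the blocks of $P$, use the non-anticipating property of $\rho$ to make the cross term $\langle \ep_{\mathsf T}, \hat y_{\mathsf T}\rangle$ vanish in expectation so that the $\sigma^2\|\ep_{\mathsf T}\|^2$ contributions cancel, and control the truncation errors coordinatewise via sub-Gaussian tails, splitting on whether $|\theta^*_s|\le\lambda$. The only cosmetic difference is that the paper cancels the noise energy through the identity $\E\|y_S-\hat y_S\|^2=\E\|\hat y_S-\theta^*_S\|^2+\sigma^2\E\|\ep_S\|^2$ rather than expanding both squared norms around $\theta^*$ at once, and your derivation of \eqref{eq:generic_online2} is spelled out in slightly more detail than the paper's.
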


\begin{remark}\label{remark:truncation_parameter}
Our truncation threshold $C (\sigma \sqrt{\log |\mathsf T|} \vee\|\theta^*\|_{\infty})$ is comparable to the 
threshold  given in \cite[Theorem~5]{2021optimal} for Gaussian errors.
\end{remark}

We now explain various features and aspects of the above theorem.

\begin{itemize}
\item The reader can read the bound in~\eqref{eq:generic_online2} as
\begin{align*}
{\rm MSE}(\hat \theta_{\mathsf T}, \theta_{\mathsf T}^*) \le \inf_{\substack{P \in {\mathcal P}_{\mathsf T},\\ 
\theta \in \Theta_P}} \frac{1}{|\mathsf T|} \big(\underbrace{\|\theta_{\mathsf T}^* - \theta 
\|^2}_{T_1} + \underbrace{C\, \lambda^2 \,|P|\log \e |\mathcal S|}_{T_2} + 
\underbrace{\overline{\mathcal{R}}(\theta,P)}_{T_3}\big) + \textit{lower order term}.\end{align*}
Indeed, the only importance of the factor $\frac{1}{|\mathsf T|^{2}}$ in the last term of \eqref{eq:generic_online2} is that it is $o(\frac{1}{|\mathsf T|})$, i.e., of lower order 
than the principal term. Indeed, by suitably increasing the constant 
$C$, we can get any given power of $|\mathsf T|$ in the 
denominator.

\medskip

\item To understand and interpret the above bound, it helps to first consider $\mathsf T = K$ and 
then fix a partition $P \in \mathcal{P}_{K} = \mathcal{P}$ and a piecewise regular signal $\theta \in \Theta_{P}.$ We also keep in mind two prediction rules, the first one is the online rule $r_{P}$ and the second one is $r_{\theta}$ (both described before the statement of Theorem~\ref{thm:generic_online}).
The bound inside the infimum is a sum of three terms, $T_1,T_2$ and $T_3$ as in the last display. 
\begin{enumerate}

\item The first term $T_1$ is simply the squared distance between $\theta$ and $\theta^*.$ This term is obviously small or big depending on whether $\theta$ is close or far from $\theta^*.$

\item The second term captures the complexity of the partition $P$ where the complexity is simply the 
cardinality or the number of constituent sets/experts $|P|$ multiplied by log cardinality of the total number of 
experts $\log \e |\mathcal{S}|$. The reader can think of this term as the ideal risk bound achievable and 
anything better is not possible when the true signal $\theta^*$ is piecewise regular on $P.$ This term is 
small or big depending on whether $|P|$ is small or big.

\item The third term $T_3$ is $\mathcal{R}(\theta, P)$ which can be interpreted as the expected excess squared loss or regret of the online rule $r_{P}$ as compared to the prediction rule $r_{\theta}.$  This term is small or big depending on how good or bad is the online rule $r_{P}$ compared to the prediction rule $r_{\theta}.$
\end{enumerate}

\medskip

\item Our bound is an infimum over the sum of three terms $T_1,T_2,T_3$ for any $P \in \mathcal{P}$ and $\theta \in \Theta_{P}$ which is why we can think of this bound as an oracle risk bound in the following 
sense. Consider the case when $\mathsf T = K$ and $\theta^*$ lies in $\Theta_{P^*}$ for some $P^*$ which 
is of course unknown. In this case, an oracle who knows the true partition $P^*$ might naturally trust experts 
locally and use the online prediction rule $r_{P^*}.$ In this case, our bound reduces (by setting $P = P^*, \theta = \theta^*$) to the MSE incurred by this oracle prediction rule plus the ideal risk  $|P^*| \log \e 
|\mathcal{S}|$ term which is unavoidable. Because of the term $T_1$, this argument holds even if $\theta^*$ does not exactly lie in $\Theta_{P^*}$ but is very close to it. To summarize, our MSE bound ensures that we 
nearly perform as well as an oracle prediction rule which knows the \textit{true} partition corresponding to 
the target signal $\theta^*.$

\medskip

\item The term $T_{3}$ in the MSE bound in~\eqref{eq:generic_online2} behooves us to find experts with 
good online prediction rules. If each expert $S \in \mathcal{S}$ indeed is equipped with a good prediction 
rule, then under the assumption that $\theta^*$ is exactly (or is close to) piecewise regular on a partition 
$P^* \in \mathcal{P}$, the term $T_3 = \overline{\mathcal{R}}(\theta^*, P^*)$ will be small and our bound 
will thus be better. This is what we do in our example applications, where we use provably good online rules 
such as running mean or the online linear regression forecaster of Vovk~\cite{vovk1998competitive}. Infact, in each of 
the examples that we discuss subsequently in this paper, we bound the term $\overline{\mathcal{R}}(\theta, 
P)$ in two stages. First, we write
$$\E \mathcal{R}(y, \theta, P) \leq |P|\:\: \E \sup_{\rho, S \in P} \Big(\sum_{t: \rho(t) \in S}(y_{\rho(t)} - \hat{y}_{\rho(t)}^{(S)})^2  - \|y_S - \theta_{S} \|^2\Big).$$
Then in the second stage we 
obtain a bound on the expectation in the right side above by a log factor. Thus there is no real harm if the reader thinks of $\overline{\mathcal{R}}(\theta, P)$ as also being of the order $|P|$ up to log factors which is the ideal and unavoidable risk as mentioned before.

\medskip 
\item A remarkable feature of Theorem~\ref{thm:generic_online} is that the MSE bound in~\eqref{eq:generic_online2} holds \textit{simultaneously} for all sets $\mathsf T \subset K.$ Therefore, the 
interpretation that our prediction rule performs nearly as well as an oracle prediction rule holds locally for 
every subset or subregion $\mathsf T$ of the domain $K.$ This fact makes our 
algorithm provably \textit{spatially adaptive} to the class of all subsets of $K$ with respect to the complexity parameter proportional to $|P|$ in the sense described in Section~\ref{sec:adap}. The implications of this will be further discussed when we analyze online prediction of specific function classes in the next two sections. 



\end{itemize}

\begin{proof}[Proof of Theorem~\ref{thm:generic_online}]
Since $y = \theta^* + \sigma \epsilon$, we can write for any $S \in \mathcal S$,
	\begin{align*}
		\|y_S - \hat y_S\|^2 = \|\hat y_S - \theta^*_S \|^2 + 2\sigma \langle \epsilon_S, \hat y_S\rangle + \sigma^2 \|\epsilon_S\|^2.
	\end{align*}
However, since $\rho$ is non-anticipating and $\hat y_{\rho(t)}$ is measurable relative to $(\rho[1:t], y_{\rho[1: (t-1)]})$ 
and $\ep_s$'s  have mean zero, it follows from the previous display that
	\begin{align}\label{eq:l2decomp}
		\E \|y_S - \hat y_S\|^2 = \E \|\hat y_S - \theta^*_S \|^2 +  \sigma^2 \E \|\epsilon_S\|^2.
	\end{align}	
	On the other hand, adding up the upper bound on $\|y_S - \hat y_S\|^2$ given by 
	Proposition~\ref{prop:aggregate_regret} over all $S \in P$ for some $P \in \mathcal P$ we get 
	\begin{align*}
		\|y_{\mathsf T} - \hat y_{\mathsf T}\|^2\, \le \, \, & \sum_{S \in P}\,\sum_{\substack{t: \rho(t) \in S}} \big(y_{\rho(t)} - \hat y_{\rho(t)}^{(S)} \big)^2 + 2\|y_{\mathsf T} - 
		\Pi_{\lambda} y_{\mathsf T}\|^2 + 8 \lambda^2 |P| \log \e |\mathcal S| \nonumber \\  & + 4 \lambda^2\sum_{s \in \mathsf T} 1\{|y_s| > \lambda\}.
	\end{align*}Now taking expectations on both sides and using the definition of $\overline 
	\lambda(\theta, P)$ from \eqref{eq:regret_bnd}, we can write
	\begin{align}\label{eq:y_haty_dist}
		\E \|y_{\mathsf T} - \hat y_{\mathsf T}\|^2 \le \, \, & \E\|y_{\mathsf T} - \theta \|^2 + \overline \lambda(\theta, P) + 8  \lambda^2 \,|P|\log \e |\mathcal S| + 2\E \|y_{\mathsf T} - \Pi_{\lambda}y_{\mathsf T}\|^2 \nonumber \\ 
		&+ 4 \lambda^2 \sum_{s \in \mathsf T} \P(|y_s| > \lambda).
	\end{align}
	Since $\ep_s$'s have mean zero, we get by expanding $\|y_{\mathsf T} - 
	\theta \|^2$,
	\begin{align*}
		\E\|y_{\mathsf T} - \theta \|^2 \le \|\theta_{\mathsf T}^* - \theta \|^2  + \sigma^2 \E\|\ep_{\mathsf T}\|^2.
	\end{align*}
	Plugging this bound into the right hand side of \eqref{eq:y_haty_dist}, we obtain
	\begin{align*}
		\E \|y_{\mathsf T} - \hat y_{\mathsf T} \|^2 \le  & \:\:\|\theta_{\mathsf T}^* - \theta \|^2 + \sigma^2 \E\|\ep_{\mathsf T}\|^2 + \overline \lambda(\theta, P) + 8 \lambda^2 \,|P|\log \e |\mathcal S| + 2\E\|y_{\mathsf T} - \Pi_{ \lambda}y_{\mathsf T}\|^2 \nonumber \\ & + 4  \lambda^2 \sum_{s \in \mathsf T} \P(|y_s| >  \lambda).
	\end{align*}
	Together with \eqref{eq:l2decomp}, this gives us
	\begin{align*}
		\E \|y_{\mathsf T} - \theta_{\mathsf T}^*\|^2 \le & \:\:\|\theta_{\mathsf T}^* - \theta \|^2 + \overline \lambda(\theta, P) + 8  \lambda^2 \,|P|\log \e |\mathcal S| + 2\E\|y_{\mathsf T} - \Pi_{ \lambda}y_{\mathsf T}\|^2 \nonumber \\ 
		& + 4  \lambda^2 \sum_{s \in \mathsf T} \P(|y_s| >  \lambda).
	\end{align*}
	Minimizing the right hand side in the above display over all $P \in \mathcal P$ and $\theta 
	\in \Theta_P$, we get 
	\begin{align}\label{eq:tail_bnd_decomp}
		\E \|y_{\mathsf T} - \theta_{\mathsf T}^*\|^2 \le & \:\: \inf_{P \in \mathcal P,\, \theta \in \Theta_P} \big(\|\theta_{\mathsf T}^* - \theta \|^2 + \overline \lambda(\theta, P) + 8  \lambda^2 \,|P|\log 
		|\mathcal S|\big) + 2\E\|y_{\mathsf T} - \Pi_{ \lambda}y_{\mathsf T}\|^2 \nonumber \\ 
		& + 4  \lambda^2 \sum_{s \in \mathsf T} \P(|y_s| >  \lambda).\end{align} It only remains to verify the 
	bounds on the error terms due to truncation. Since $|\hat y_s| \le  \lambda$ by the design of our 
	algorithm, we have \begin{equation}\label{eq:thetas_big} (\hat y_s - \theta_s^*)^2 \le 
	2(\theta_s^* - T_{ \lambda}(\theta_s^*))^2  + 8  \lambda^2 \end{equation} for all $s \in \mathsf T$. We 
	will apply this naive bound whenever $|\theta_s^*| >  \lambda$. So let us assume that $s$ is such 
	that $|\theta_s^*| \le  \lambda$. Let us start by writing
	\begin{align*}
		\E(y_s - T_{\lambda}(y_s))^2 = I_+ + I_-,
	\end{align*}
	where, with $x_+ \coloneqq \max(x, 0)$ and $x_- \coloneqq -\min(x, 0)$, \begin{align*}I_+ \coloneqq \E(y_s - \lambda)_+^2 = 2\sigma^2 \int_{x > \frac {\lambda}{\sigma}} \big (x - \frac {\lambda}{\sigma} \big)\,\P\big[\ep_s > x - \frac{\theta_s^*}{\sigma}\big] \, dx,\:\: \mbox{and}\nonumber \\ 
		I_- \coloneqq \E(y_s + \lambda)_-^2 = -2 \sigma^2 \int_{x < -\frac{\lambda}{\sigma}} \big(x 
		+ \frac{\lambda}{\sigma}\big)\,\P\big[\ep_s < x - \frac{\theta_s^*}{\sigma}\big] \, dx.
	\end{align*} In writing these expressions we used the standard fact that $$\E(X - a)_+^2 = 2 \int_{x > a} (x - a) \P[X > x] \, dx.$$ We deal with $I_+$ first. Since $\theta_s^* \le \lambda$ and $\ep_s$ has 
	sub-Gaussian decay around $0$ with unit dispersion factor, we 
	can bound $I_{+}$ as follows:\begin{align} \label{eq:I+estimate} I_{+} =  2\sigma^2 \int_{x > \frac {\lambda - \theta_s^*}{\sigma}} \big (x - \frac {\lambda - \theta_s^*}{\sigma} \big)\,\P\big[\ep_s > x \big] \, dx \le C\sigma^2  \,\e^{-\frac{|\theta_s^* - \lambda|^2}{2 
				\sigma^2}}\end{align} where $C$ is an absolute constant. Similarly we can deduce \begin{align} \label{eq:I-estimate} I_{-} \le C\sigma^2  \,\e^{-\frac{|\theta_s^* - \lambda|^2}{2 \sigma^2}}.\end{align}
	On the other hand, for any $|\theta_s^*| \le \lambda$, we have \begin{align}\label{eq:gaussian_tail}\P[|y_s| > \lambda] \le \P \big[ \ep_s > \frac{\lambda - \theta_s^*}{\sigma} \big] + \P\big[ \ep_s < \frac{- \lambda - \theta_s^*}{\sigma}\big] \le 2\e^{- \frac{|\theta_s^* - \lambda|^2}{2\sigma^2} \wedge \frac{|\theta_s^{*} + \lambda|^2}{\sigma^2}}.\end{align}
	Finally we plug the estimates \eqref{eq:I+estimate}, \eqref{eq:I-estimate} and 
	\eqref{eq:gaussian_tail} into \eqref{eq:tail_bnd_decomp} when $|\theta_s^*| \le \lambda$, and the 
	estimate \eqref{eq:thetas_big} and the trivial upper bound on probabilities when 
	$|\theta_s^*| > \lambda$ to get \eqref{eq:generic_online}. \eqref{eq:generic_online2} then 
	follows immediately from \eqref{eq:generic_online} by choosing $C$ large 
	enough.\end{proof}

In the next few sections we will introduce and discuss online prediction rules for several classes of functions. 
In each case we will apply Theorem~\ref{thm:generic_online} to derive the corresponding risk bounds. The 
proofs of all the results are given in the Appendix section.
\section{Online Mean Aggregation over Dyadic Rectangles (OMADRE)}\label{sec:omadre}
In this section, we will specifically study a particular instantiation of our general 
algorithm (laid out in Section~\ref{sec:algo}) which we tentatively call as the Online Mean Aggregation over Dyadic Rectangles estimator/predictor (OMADRE). Here, $K = L_{d,n}$ and the set of experts corresponds to the set of all 
\textit{dyadic rectangles} of $L_{d,n}$. Some precise definitions are given below. 

An {\em axis aligned rectangle} or 
simply a rectangle $R$ is a subset of $L_{d, n}$ which is a 
product of intervals, i.e., $R = \prod_{i=1}^d[a_i, b_i]$ for some $1 \le a_i \le b_i \le n$; 
$i \in [d]$. A sub-interval of $[1, n]$ is called {\em dyadic} if it is of the form 
$((a-1)2^s, a2^s]$ for some integers $0 \le s \le k$ and $1 \le a \le 2^{k-s}$ where we assume 
$n = 2^k$ for simplicity of exposition. \textit{We call a rectangle dyadic if it is a product of 
dyadic intervals}.

Now we take our experts to be the dyadic sub-rectangles of $L_{d, n}$, i.e., 
in the terminology of Section~\ref{sec:main_result}, $\mathcal S$ is the set of dyadic sub-rectangles of $L_{d, n}$. We set $F_S = {\rm span}(\{1\})$ ---- the space of 
all constant functions on $S$ --- for all $S \in \mathcal S$. We also let $\mathcal P_{dp}$ be the 
set of all {\em dyadic rectangular partitions} of $L_{d, n}$ where a (dyadic) rectangular 
partition $P$ is a partition of $L_{d, n}$ comprising only (respectively, dyadic) rectangles. 
Since there are at most $2n$ dyadic sub-intervals of $[n]$, we note that \begin{equation}\label{eq:no_of_expert} 
|\mathcal S| = (2n)^d = 2^d N.
\end{equation}
 Under this setting, for any partition $P$ of $L_{d,n}$ the set $\Theta_{P}$ refers to the set of all arrays $\theta \in \R^{L_{d,n}}$ such that $\theta$ is constant on each constituent set of $P.$

	Finally we come to the choice of our online rule $\mb r$. It is very natural to consider the online averaging rule ${\mb r}$ 
	defined as:
	\begin{equation}\label{def:online_average}
	r_{U, s}^{(S)}((y_{u} : u \in U)) = \overline y_U  \mbox{ for all $U \subset S \in \mathcal S$ and $s \in S \setminus U$}.
	\end{equation}
By convention, we set $\overline y_{\emptyset} = 0$.



\begin{lemma}[Computational complexity of {\rm OMADRE}]\label{lem:complexity_online_mean}
There exists an absolute constant $C > 0$ such that the computational complexity, i.e., the number of 
elementary operations involved in the computation of OMADRE is bounded by $C N (\log_2 2n)^d$.
\end{lemma}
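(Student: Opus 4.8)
The plan is to show that the OMADRE predictor can be computed in $O(N(\log_2 2n)^d)$ elementary operations by tracking what work is actually done at each of the $N$ rounds. At round $t$, the adversary reveals $\rho(t)$, and the only experts that can possibly change state are those dyadic rectangles $S$ containing $\rho(t)$. The first key observation is that a fixed point $\rho(t) \in L_{d,n}$ lies in exactly $\prod_{i=1}^d (\log_2 2n) = (\log_2 2n)^d$ dyadic rectangles, since along each coordinate axis there are exactly $\log_2 2n = k+1$ dyadic subintervals of $[n]$ containing a given coordinate (one of each dyadic scale $0,1,\dots,k$). So $|A_t| \le (\log_2 2n)^d$ for every $t$, and in fact the set of dyadic rectangles containing $\rho(t)$ can be enumerated in $O((\log_2 2n)^d)$ time by intersecting the $d$ lists of containing dyadic intervals per coordinate.

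\textbf{Per-round bookkeeping.} For each of these $O((\log_2 2n)^d)$ active experts $S$, the work consists of: (i) reading off its current prediction $\hat y^{(S)}_{\rho(t)} = \overline y_{\rho[1:(t-1)]\cap S}$, which is available in $O(1)$ time if we maintain, for each dyadic rectangle $S$, the running sum $\sum_{u \in \rho[1:(t-1)]\cap S} y_u$ and the running count $|\rho[1:(t-1)]\cap S|$; (ii) forming the truncated prediction $T_\lambda(\hat y^{(S)}_{\rho(t)})$ and multiplying by the normalized weight $\hat w_{S,t}$, then summing these over $S \in A_t$ to produce $\hat y_{\rho(t)}$ — this is $O(|A_t|) = O((\log_2 2n)^d)$ arithmetic operations, where the normalizing constant $\sum_{S\in A_t} w_{S,t}$ is computed in the same pass; (iii) after $y_{\rho(t)}$ is revealed, computing $\ell_{S,t}$ and performing the multiplicative weight update $w_{S,t+1} = w_{S,t} e^{-\alpha \ell_{S,t}} \cdot (\sum_{S\in A_t} w_{S,t}) / (\sum_{S \in A_t} w_{S,t} e^{-\alpha\ell_{S,t}})$, again $O(|A_t|)$ operations; and (iv) updating the running sum and count of each active $S$ by adding $y_{\rho(t)}$ and $1$ respectively, in $O(|A_t|)$ time. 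Crucially, the weights of inactive experts are unchanged and need not be touched, so no round costs more than $O((\log_2 2n)^d)$.

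\textbf{Aggregating over rounds.} Summing over $t = 1,\dots,N$ gives a total of $O(N (\log_2 2n)^d)$ operations for the prediction and update steps. One should also account for initialization: there are $|\mathcal S| = 2^d N$ dyadic rectangles, each needing its weight initialized to $1/|\mathcal S|$ and its running sum and count initialized to $0$, which is $O(2^d N) = O(N)$ work since $d$ is fixed; this is dominated by $O(N(\log_2 2n)^d)$. The data structure mapping a coordinate to its list of containing dyadic intervals, and a $d$-tuple of dyadic intervals to the corresponding rectangle's stored state, can be set up in $O(N)$ preprocessing (e.g.\ via arrays indexed by $(\text{scale}, \text{position})$ per axis). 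Absorbing all fixed-$d$ factors into the constant $C$ yields the claimed bound $C N (\log_2 2n)^d$.

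\textbf{Main obstacle.} The only real subtlety is the normalization in step~3 of the algorithm and in the weight update: one must verify that the quantities $\sum_{S\in A_t} w_{S,t}$ and $\sum_{S\in A_t} w_{S,t} e^{-\alpha \ell_{S,t}}$ needed for normalization depend only on the active experts and can each be computed by a single pass over $A_t$, so that there is no hidden cost scaling with $|\mathcal S|$. This is immediate from the algorithm's definition, but it is worth stating explicitly, as it is precisely the feature that makes the per-round cost $O(|A_t|)$ rather than $O(|\mathcal S|)$. A secondary, purely routine point is to confirm that each arithmetic operation (including evaluating $e^{-\alpha\ell_{S,t}}$) is counted as a single elementary operation, which is the standard convention.
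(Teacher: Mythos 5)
Your proposal is correct and follows essentially the same route as the paper's proof: bound the number of dyadic rectangles containing $\rho(t)$ by $(\log_2 2n)^d$ (enumerable coordinate-wise), maintain per-rectangle running state so each active expert's prediction and weight update cost $O(1)$, observe that the normalizing sums range only over $A_t$, and sum over the $N$ rounds. The paper proves this as a special case of the OLRADRE complexity bound (where the per-expert cost is $O(|\mathcal F|^3)$ due to the matrix inversion), but restricted to online averaging the bookkeeping is exactly your running sum and count.
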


\begin{remark}
	The above computational complexity is near linear in the sample size $N$ but exponential in the dimension $d.$ Therefore, the estimators we are considering here are very efficiently computable in low dimensions which are the main cases of interest here. 
	\end{remark}


The OMADRE, being an instance of our general algorithm will satisfy our simultaneous oracle risk bound in Theorem~\ref{thm:generic_online}. This oracle risk bound can then be used to derive risk bounds for several function classes of interest. 
We now discuss two function classes of interest for which the OMADRE performs near optimally. 

\subsection{\textbf{Result for Rectangular Piecewise Constant Functions in General Dimensions}}\label{sec:piecewise_constant}
Suppose $\theta^*$ is piecewise constant on some unknown rectangular partition $P^*$ of the domain $K = 
L_{d,n}$. For concreteness, let the partition $P^* = (R_1,\dots,R_k)$. 
	An \textit{oracle predictor} $\hat{\theta}_{({\rm oracle})}$ --- which {\em knows} the minimal rectangular partition $(R_1,\dots,R_k)$ of $\theta^*$ {\em exactly} ---can simply use the online averaging prediction 
	rule given in~\eqref{def:online_average} separately within each of the rectangles$(R_1,\dots,R_k).$ By a 
	basic result about online mean prediction, (see Lemma~\ref{lem:online_mean_regret1}), it can be shown 
	that the MSE of this oracle predictor is bounded by $O(\frac{k \|\theta^*\|_{\infty} \log n}{N}).$ In words,  
	the MSE of this oracle predictor scales (up to a log factor which is necessary) like the \textit{number of 
		constant pieces of $\theta^*$ divided by the sample size $N$ which is precisely the parametric rate of convergence.}

A natural question is whether there exists an online prediction rule which a) adaptively achieves a MSE 
bound similar to the oracle prediction rule b) is computationally efficient. In the batch set up, this question is 
classical (especially in the univariate setting when $d = 1$) and has recently been studied thoroughly in 
general dimensions in~\cite{chatterjee2019adaptive}. It has been shown there that the Dyadic CART 
estimator achieves this near (up to log factors) oracle performance when $d \leq 2$ and a more 
computationally intensive version called the Optimal Regression Tree estimator (ORT) can achieve this near 
oracle performance in all dimensions under some assumptions on the true underlying partition. 
However, we are not aware of this question 
being explicitly answered in the online setting. We now state a theorem saying that the OMADRE 
essentially attains this objective. Below we denote the set of all partitions of $L_{d,n}$ into rectangles by 
$\mathcal{P}_{\all}.$ Note that the set $\mathcal{P}_{\dpt}$ is strictly contained in the set $\mathcal{P}_{\all}$.

\begin{theorem}[Oracle Inequality for Arbitrary Rectangular Partitions]\label{thm:pcconst}
	Let $\mathsf T$ be any subset of $K$ and $\hat{\theta}^{OM}$ denote the OMADRE predictor. There exists an 
	absolute constant $C$ such that for any $\lambda \ge C (\sigma \sqrt{\log N} 
	\vee\|\theta^*\|_{\infty})$,  one has for any non-anticipating ordering $\rho$ of $L_{d, n}$, 
	\begin{align}\label{eq:pcconst}
		\E\, \frac{1}{|\mathsf T|}\|\hat \theta^{OM}_{\mathsf T} - \theta_{\mathsf T}^* \|^2  \le 
		\inf_{\substack{P \in {\mathcal P}_{\all, \mathsf T}\\ \theta \in \Theta_P \subset \R^{\mathsf T}}} 
		&\big( \frac{1}{|\mathsf T|} \|\theta_{\mathsf T}^* - \theta \|^2 + C \lambda^2 
		\frac{|P|}{|\mathsf T|} (\log \e n)^d \log 2^d N \big) + \frac{\sigma^2 + \lambda^2}{|\mathsf T|^2}.
	\end{align}
	\end{theorem}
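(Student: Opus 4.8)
The plan is to derive Theorem~\ref{thm:pcconst} from the general bound~\eqref{eq:generic_online2}, specialized to the OMADRE instantiation: $K = L_{d,n}$, $\mathcal S$ the family of dyadic subrectangles of $L_{d,n}$ (so $|\mathcal S| = 2^d N$ by~\eqref{eq:no_of_expert}), $F_S = {\rm span}(\{1\})$ for every $S$, and $\mathbf r$ the online averaging rule~\eqref{def:online_average}. Since $|\mathsf T| \le N$, the hypothesis $\lambda \ge C(\sigma\sqrt{\log N}\vee\|\theta^*\|_\infty)$ is at least as strong as the one needed for~\eqref{eq:generic_online2}, and $\log\e|\mathcal S| = \log(\e\,2^d N)$ is comparable to $\log 2^d N$; thus~\eqref{eq:generic_online2} already has the right shape. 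Two gaps remain: in~\eqref{eq:generic_online2} the infimum runs over $\mathcal P_{\mathsf T}$, which for OMADRE is the set of \emph{dyadic} rectangular partitions of $\mathsf T$, whereas~\eqref{eq:pcconst} ranges over \emph{all} rectangular partitions $\mathcal P_{\all, \mathsf T}$; and the regret term $\overline{\mathcal R}(\theta, P)$ still has to be estimated for the averaging rule. The rest of the argument handles these two points.

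For the first, I would invoke the standard interval-decomposition fact: with $n = 2^k$, every subinterval of $[n]$ is a disjoint union of at most $2\log_2 n$ dyadic intervals, hence, taking products, every rectangle of $L_{d,n}$ is a disjoint union of at most $(2\log_2 n)^d \le C(\log\e n)^d$ dyadic rectangles. Consequently any $P \in \mathcal P_{\all,\mathsf T}$ refines to a dyadic rectangular partition $\widetilde P \in \mathcal P_{\mathsf T}$ with $|\widetilde P| \le C(\log\e n)^d\,|P|$. Since $\widetilde P$ refines $P$ and every $F_S$ consists of constants, any $\theta \in \Theta_P$ is also constant on the pieces of $\widetilde P$, i.e.\ $\Theta_P \subseteq \Theta_{\widetilde P}$. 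Feeding the pair $(\widetilde P,\theta)$ into~\eqref{eq:generic_online2} leaves $\|\theta^*_{\mathsf T}-\theta\|^2$ unchanged and turns the complexity term $C\lambda^2|\widetilde P|\log\e|\mathcal S|$ into one of order $\lambda^2|P|(\log\e n)^d\log 2^d N$, which is exactly the complexity term in~\eqref{eq:pcconst}.

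The main work is the bound $\overline{\mathcal R}(\theta,\widetilde P) \le C\lambda^2|\widetilde P|\log 2^d N + (\sigma^2+\lambda^2)/|\mathsf T|$ for $\theta$ constant on the pieces of $\widetilde P$. I would split $\mathcal R$ according to the event $E = \{\max_{s\in K}|y_s|\le 2\lambda\}$. On a single dyadic rectangle $S$, writing $z_1,\dots,z_m$ for the values of $y$ on $S$ in their order of appearance (with $\bar z_0 \coloneqq 0$), the running-mean predictor obeys the elementary identity $\sum_{j=1}^m (z_j - \bar z_{j-1})^2 - \sum_{j=1}^m (z_j - \bar z_m)^2 = z_1^2 + \sum_{j=2}^m \tfrac1j (z_j - \bar z_{j-1})^2$ (the running-mean regret bound underlying Lemma~\ref{lem:online_mean_regret1}); combined with $\|y_S - \theta_S\|^2 \ge \|y_S - \bar y_S\mathbf 1\|^2$, this shows that on $E$ the $S$-summand in~\eqref{eq:regret_bnd} is at most $C\lambda^2\log N$ \emph{uniformly over all orderings} $\rho$, so summing over $S \in \widetilde P$ gives $\mathcal R(y,\theta,\widetilde P)\le C\lambda^2|\widetilde P|\log N$ on $E$. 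Off $E$, since $\lambda \ge C\sigma\sqrt{\log N}$ and $\|\theta^*\|_\infty \le \lambda/C$, the sub-Gaussian tail bound makes $\P(E^c)$ decay faster than any fixed power of $N$; combining the crude bound $\mathcal R(y,\theta,\widetilde P) \le N^2\max_s y_s^2$ with Cauchy--Schwarz and the fourth moments of $y$ shows $\E[\mathcal R(y,\theta,\widetilde P)\,\mathbf 1_{E^c}] \le (\sigma^2+\lambda^2)/|\mathsf T|$ once $C$ is large. Adding the two contributions gives the claimed bound on $\overline{\mathcal R}(\theta,\widetilde P)$.

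Putting this together, plugging $(\widetilde P,\theta)$ into~\eqref{eq:generic_online2} and using the two estimates above, all extra logarithmic factors and the $\sigma^2+\lambda^2$ can be absorbed into the constant $C$ and, after dividing by $|\mathsf T|$, into the lower-order term $(\sigma^2+\lambda^2)/|\mathsf T|^2$; taking the infimum over $P \in \mathcal P_{\all,\mathsf T}$ and $\theta \in \Theta_P$ then yields~\eqref{eq:pcconst}. I expect the regret estimate to be the only genuinely delicate step, and within it the subtle point is that $\mathcal R$ carries a supremum over orderings $\rho$ \emph{inside} the expectation while the noise is unbounded; this is defused by noting that the deterministic regret bound on $E$ is independent of $\rho$, so the supremum costs nothing there, while the complementary contribution is negligible by the sub-Gaussian control on $\max_s|y_s|$.
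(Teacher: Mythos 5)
Your proposal is correct and follows essentially the same route as the paper: apply the general oracle bound \eqref{eq:generic_online2} to the OMADRE instantiation, control the regret term $\overline{\mathcal R}(\theta,P)$ via the deterministic running-mean regret bound (Lemma~\ref{lem:online_mean_regret1}), and pass from arbitrary to dyadic rectangular partitions at the cost of a $(\log \e n)^d$ factor in $|P|$. The only immaterial difference is in the stochastic part of the regret estimate, where the paper simply plugs $\E\|y\|_{\infty}^2 \le C(\|\theta^*\|_{\infty}^2+\sigma^2\log \e N)$ into the deterministic bound $4\|y\|_{\infty}^2\log \e N$ rather than splitting on the event $\{\max_s|y_s|\le 2\lambda\}$ and controlling the complement separately.
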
 
	
We now discuss some noteworthy aspects of the above theorem.
\begin{enumerate}
	\item It is worth emphasizing that the above oracle inequality holds over all subsets $\mathsf T$ of $K$ simultaneously. Therefore, the OMADRE is a \textit{spatially adaptive}
	estimator in the sense of Section~\ref{sec:adap}. Such a guarantee is not available for any existing estimator, even in the batch learning setup. 
	For example, in the batch learning setup, all available oracle risk bounds for estimators such as Dyadic 
	CART and related variants are known only for the full sum of squared errors over the entire domain. 
	
	\medskip
	
	\item To the best of our knowledge, the above guarantee is the first of its kind explicitly stated in the 
	online learning setup. Therefore, the above theorem shows it is possible to attain a near (up to log factors) 
	oracle performance by a near linear time computable estimator in the online learning set up as well; thereby answering our first main question laid out in Section~\ref{sec:intro}.
	
	\medskip

	\item We also reiterate that the infimum in the R.H.S in~\eqref{eq:pcconst} is over the space of {\em all} 
	rectangular partitions $\mathcal{P}_{\all}.$ This means that if the true signal $\theta^*$ is piecewise 
	constant on an {\em arbitrary}  rectangular partition with $k$ rectangles, the OMADRE attains the 
	desired $\tilde{O}(k/N)$ rate. Even in the batch learning set up, it is not known how to attain this rate in 
	full generality. For example, it has been shown in~\cite{chatterjee2019adaptive} that the Dyadic 
	CART (or ORT) estimator enjoys a similar bound where the infimum is over the space of all recursive dyadic rectangular partitions (respectively decision trees) of $K$ which is a {\em stirct} subset of $\mathcal P_{{\rm all}}$. Thus, the bound presented here is stronger in this sensethan both these bounds known for Dyadic CART/ORT. More details about comparisons with Dyadic CART and ORT is given in Section~\ref{sec:discuss}.
	
	\medskip
	
\end{enumerate}
	
	
\subsection{\textbf{Result for Functions with Bounded Total Variation in General Dimensions}}
\label{sec:TV}
Consider the function class whose total variation (defined below) is bounded by some number. This is a 
classical function class of interest in offline nonparametric regression since it contains functions which 
demonstrate spatially heterogenous smoothness; see Section $6.2$ in~\cite{tibshiraninonparametric} and 
references therein. In the offline setting, the most natural estimator for this class of functions is what is 
called the Total Variation Denoising (TVD) estimator. The two dimensional version of this estimator is also 
very popularly used for image denoising; see~\cite{rudin1992nonlinear}. It is known that a well tuned TVD 
estimator is minimax rate optimal for this class in all dimensions; see~\cite{hutter2016optimal} 
and~\cite{sadhanala2016total}.

In the online setting, to the best of our knowledge, the paper~\cite{baby2019online} gave the first online algorithm attaining the minimax optimal rate. This algorithm is based on wavelet shrinkage. Recently, the paper~\cite{baby2021optimal} studied a version of the 
OMADRE in the context of online estimation of univariate bounded variation functions. In this  section we state a result showing that with our definition of the OMADRE, it is possible to predict/forecast bounded 
variation functions online in general dimensions at nearly the same rate as is known for the batch set up.

We can think of $K = L_{d,n}$ as the $d$ dimensional regular lattice graph. Then, thinking of $\theta \in \R^{L_{d,n}}$ as a function on $L_{d,n}$ we define
\begin{equation}\label{eq:TVdef}
\TV(\theta) =  \sum_{(u,v) \in E_{d,n}} |\theta_{u} - \theta_{v}| 
\end{equation}
where $E_{d,n}$ is the edge set of the graph $L_{d,n}$. The above definition can be motivated via the analogy with the continuum case. If we think  
$\theta[i_1,\dots,i_n] = f(\frac{i_1}{n},\dots,\frac{i_d}{n})$ for a differentiable function 
$f: [0,1]^{d} \rightarrow \R$, then the above definition divided by $n^{d - 1}$ is precisely 
the Reimann approximation for $\int_{[0,1]^d} \|\nabla f\|_1$. In the sequel we denote,
\begin{equation*}
	\mathcal{BV}_{d, n}(V^*) \coloneqq 	\{\theta \in \R^n : \TV(\theta) \le V^*\}.
\end{equation*}

We are now ready to state:
\begin{theorem}[Prediction error for $\mathcal{BV}_{d, n}(V^*)$ with online averages]
	\label{thm:TV_slow_rate}	
	Fix any $\mathsf T \subset K$ that is a dyadic square and denote $V^*_{\mathsf T} = TV(\theta^*_{\mathsf T})$. If $\lambda \ge C (\sigma \sqrt{\log N} 
	\vee\|\theta^*\|_{\infty})$ as in Theorem~\ref{thm:generic_online}, we have for some 
	absolute constant $C > 1$ and any non-anticipating ordering $\rho$ of $L_{d, n}$,
	\begin{equation}\label{eq:TV_slow_rate_d>1}
	\E \, \frac{1}{|\mathsf T|} \|\hat \theta^{OM}_{\mathsf T} - \theta_{\mathsf T}^* \|^2 \le 
	\frac{C}{|\mathsf T|} \big(\lambda^2 (\log 2^d N)^2 + \lambda V_{\mathsf T}^* (\log 2^d N)^{3/2} \big) + 
	\frac{\sigma^2  + \lambda^2}{|\mathsf T|^2}.
	\end{equation}
	when $d > 1$. On the other hand, for $d = 1$ we have
	\begin{equation}\label{eq:TV_slow_rate_d=1}
	\E \, \frac{1}{|\mathsf T|} \|\hat \theta^{OM}_{\mathsf T} - \theta_{\mathsf T}^* \|^2 \le 
	C\, \lambda^{4/3} (\log 2^d N)^{4/3} \left(\frac{V_{\mathsf T}^*}{|\mathsf T|}\right)^{2/3} + \, 
	\frac{\sigma^2 + \lambda^2}{|\mathsf T|^2}.
	\end{equation} 
\end{theorem}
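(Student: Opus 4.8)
The plan is to feed known \emph{deterministic} approximation estimates for $\TV$-bounded arrays into the simultaneous oracle inequality of Theorem~\ref{thm:pcconst} and then to optimize over the resolution of the approximating partition; in this sense the proof is essentially a corollary of Theorem~\ref{thm:pcconst} together with approximation-theoretic facts established in \cite{chatterjee2019adaptive}. Fix the dyadic square $\mathsf T$, write $V = V^*_{\mathsf T} = \TV(\theta^*_{\mathsf T})$, and abbreviate $L = (\log \e n)^d \log 2^d N$, so that \eqref{eq:pcconst} reads
$$
\E \tfrac{1}{|\mathsf T|}\|\hat\theta^{OM}_{\mathsf T} - \theta^*_{\mathsf T}\|^2 \;\le\; \inf_{P \in \mathcal P_{\all,\mathsf T},\ \theta \in \Theta_P}\Big(\tfrac{1}{|\mathsf T|}\|\theta^*_{\mathsf T} - \theta\|^2 + C\lambda^2 \tfrac{|P|}{|\mathsf T|} L\Big) + \tfrac{\sigma^2+\lambda^2}{|\mathsf T|^2}.
$$
It therefore suffices to produce, for each relevant value of a resolution parameter $k$, a rectangular partition $P_k$ of $\mathsf T$ with $|P_k|\le k$ and a $\theta^{(k)}\in\Theta_{P_k}$ whose $\ell^2$ distance to $\theta^*_{\mathsf T}$ admits a good bound in terms of $k$ and $V$, and then to choose $k$ balancing the approximation term against the complexity term $C\lambda^2|P_k|L/|\mathsf T|$. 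One may always take $\theta^{(k)}$ to be the array of block averages of $\theta^*_{\mathsf T}$ over the pieces of $P_k$; this never increases the $\ell^\infty$ norm, so $\|\theta^{(k)}\|_\infty\le\|\theta^*_{\mathsf T}\|_\infty\le\lambda$ and no truncation error beyond what is already present in \eqref{eq:pcconst} is incurred.

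For $d=1$ the approximation bound is elementary: the cumulative variation $i\mapsto\sum_{j<i}|\theta^*_{j+1}-\theta^*_j|$ (indices restricted to the interval $\mathsf T$) is nondecreasing with total increase $V$, so cutting $\mathsf T$ wherever it crosses a multiple of $V/k$ produces an interval partition $P_k$ with $|P_k|\le k$ on each piece of which the oscillation of $\theta^*_{\mathsf T}$ is at most $V/k$; hence $\|\theta^*_{\mathsf T}-\theta^{(k)}\|^2\le |\mathsf T|(V/k)^2$. For $d\ge2$ the analogue is exactly the content of the approximation results of \cite{chatterjee2019adaptive}: a discrete Poincar\'e/Sobolev estimate of the form $\|\theta_S-\overline{\theta_S}\|^2\lesssim \TV(\theta_S)^2$, valid on any rectangle $S$ when $d\ge2$, together with a recursive rectangular splitting that equidistributes the total-variation mass across the pieces up to constants, yields a rectangular partition $P_k$ with $|P_k|\lesssim k$ and $\|\theta^*_{\mathsf T}-\theta^{(k)}\|^2\lesssim V^2/k$ (up to logarithmic factors that are harmless for the stated bound).

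It remains to optimize over $k\in\{1,\dots,|\mathsf T|\}$. For $d=1$ one balances $V^2/k^2$ against $C\lambda^2 kL/|\mathsf T|$, i.e.\ takes $k$ of order $\big(|\mathsf T|V^2/(\lambda^2L)\big)^{1/3}$, which gives a bound of order $\lambda^{4/3}L^{2/3}(V/|\mathsf T|)^{2/3}$; since $L^{2/3}$ is of order $(\log 2^d N)^{4/3}$ this is \eqref{eq:TV_slow_rate_d=1}. For $d\ge2$ one balances $V^2/(k|\mathsf T|)$ against $C\lambda^2kL/|\mathsf T|$, i.e.\ takes $k$ of order $V/(\lambda\sqrt L)$, which gives the bound of order $\lambda V\sqrt L/|\mathsf T|$ appearing as the second term of \eqref{eq:TV_slow_rate_d>1}; the first term, of order $\lambda^2(\log 2^d N)^2/|\mathsf T|$, is what survives in the small-$V$ regime where the optimal $k$ is $1$ --- and there the single-piece partition $\{\mathsf T\}$ is itself one of the dyadic experts (as $\mathsf T$ is a dyadic square), so it incurs only the $\log\e|\mathcal S|$ complexity rather than the full factor $L$. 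In every case, when the balancing value of $k$ falls outside $\{1,\dots,|\mathsf T|\}$ one clips it to that range and absorbs the slack, along with the residual terms already present in \eqref{eq:pcconst}, into the additive $\tfrac{\sigma^2+\lambda^2}{|\mathsf T|^2}$; this is routine. I expect the only genuinely nontrivial step to be the $d\ge2$ approximation estimate: unlike the one-dimensional cumulative-variation argument, partitioning a $d$-dimensional box into $k$ rectangles on which the total-variation mass is equidistributed up to constants is delicate, which is precisely why one leans on the machinery of \cite{chatterjee2019adaptive} there.
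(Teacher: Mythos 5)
Your overall strategy --- approximate $\theta^*_{\mathsf T}$ by a piecewise constant array on a rectangular partition whose resolution is then optimized against the complexity term of the oracle inequality --- is exactly the paper's strategy, and your identification of the $d\ge 2$ approximation step as the place where one must lean on the machinery of \cite{chatterjee2019adaptive} is also correct: the paper imports precisely such a result (Proposition~\ref{prop:division}), which produces a partition with $|P|\le 1+\log_2N\,(1+\TV(\theta)/\delta)$ pieces, each with total variation at most $\delta$ \emph{and aspect ratio at most $2$}; the aspect-ratio control is what makes the discrete Poincar\'e inequality (Proposition~\ref{prop:gagliardo}) give $\|\theta_S-\overline{\theta_S}\|^2\lesssim\TV(\theta_S)^2$ with an absolute constant. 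Your $d=1$ cumulative-variation argument and the resulting exponent $\lambda^{4/3}(\log N)^{4/3}(V/|\mathsf T|)^{2/3}$ check out.

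There is, however, one concrete quantitative gap. You route everything through Theorem~\ref{thm:pcconst}, whose complexity term carries the factor $L=(\log \e n)^d\log 2^dN$ coming from refining an \emph{arbitrary} rectangular partition into a dyadic one. Balancing $V^2/(k|\mathsf T|)$ against $\lambda^2kL/|\mathsf T|$ then yields a middle term of order $\lambda V\sqrt{L}/|\mathsf T|\asymp \lambda V(\log N)^{(d+1)/2}/|\mathsf T|$, which matches the stated $(\log 2^dN)^{3/2}$ only for $d\le 2$ and is strictly weaker for $d\ge 3$ (similarly for the $\lambda^2$ term at intermediate $k$; your observation that $\{\mathsf T\}$ is itself a dyadic expert only rescues the $k=1$ endpoint). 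The logarithmic losses in the approximation step are therefore \emph{not} harmless once compounded with $L$. The fix, which is what the paper does, is to note that the partition supplied by Proposition~\ref{prop:division} is already a recursive dyadic partition, so one should plug it directly into the dyadic-partition oracle bound (Corollary~\ref{cor:meanresult}), whose complexity term is only $C\lambda^2|P|\log 2^dN$ with no $(\log\e n)^d$ refinement penalty; optimizing over $\delta$ (the paper takes $\delta=\lambda(\log 2^dN)^{1/2}$ for $d>1$) then produces the exponents in \eqref{eq:TV_slow_rate_d>1} for every $d$. This also removes the need for your "equidistribution" construction with exactly $k$ pieces: parametrizing by the per-piece variation $\delta$ rather than by the piece count $k$ is what lets the dyadic structure be preserved.
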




Here are some noteworthy aspects of the above theorem.

\begin{enumerate}
	\item The above theorem ensure that the OMADRE matches the known minimax rate of 
	estimating bounded variation functions in any dimension. To the best of our knowledge, this result is new 
	in the in the online setting for the multivariate (i.e., $d \geq 2$) case.
	
	\item Note that our MSE bounds hold simultaneously for all dyadic square regions. Thus, the OMADRE adapts to the unknown variation of the signal $V^*_{\mathsf T}$, for any local dyadic square 
	region $\mathsf T$.  In this sense, the OMADRE is spatially adaptive. Even in the batch setting, 
	this type of simultaneous guarantee over a class of subsets of $L_{d,n}$ is not available for the canonical 
	batch TVD estimator.

	\item We require $\mathsf T$ to be a dyadic square because of a particular step in our proof where we 
	approximate a bounded variation array with an array that is piecewise constant over a recursive 
	dyadic partition of $L_{d,n}$ with pieces that have bounded aspect ratio. See 
	Proposition~\ref{prop:division} in the appendix.
	\end{enumerate}

\section{Online Linear Regression Aggregation over Dyadic Rectangles (OLRADRE)}\label{sec:olradre}
In this section, we consider another instantiation of our general prediction algorithm which 
is based on the Vovk, Azoury and 
Warmuth online linear regression forecaster, e.g see~\cite{vovk1998competitive}. Similar to Section~\ref{sec:omadre}, we take our set of experts $\mathcal{S}$ to 
be the set of all dyadic sub-rectangles of $L_{d, n}$. However, the main difference is that 
we now take $F_S$ to be the subspace spanned by a finite set $\mathcal F$ of basis functions on $\R^d$ 
restricted to $S$ .  In the next two subsections, we will focus specifically on the case when $\mathcal F$ is the set of all monomials in $d$ 
variables with a maximum degree (see \eqref{def:F_piecepoly} below).

Next we need to choose an online rule $\mb r$ to which end the VAW linear regression forecaster leads to an online rule defined as:
\begin{equation}\label{def:online_regression}
	r^{(S)}_{U, s}((y_u : u \in U)) = \hat \beta_s \cdot x_s \mbox{ with } \hat \beta_s \coloneqq 
	\Big(I + \sum_{u \in U \cup \{s\}} x_u x_u^T \Big)^{-1} \big(\sum_{u \in U} y_u x_u \big)
\end{equation} for all $U \subset S \in \mathcal S$ and $s \in S \setminus U$ where $x_u$ is the vector 
$(f(u):  f \in \mathcal F) \in \R^{\mathcal F}$ and $\cdot$ denotes the canonical inner product in 
$\R^{\mathcal F}$. By convention, we interpret an empty summation as 0.

The next lemma gives the computational complexity of the OLRADRE  which is the same as that of the OMADRE except that it scales cubically with the cardinality of the basis function class $\mathcal{F}.$
\begin{lemma}[Computational complexity of {\rm OLRADRE}]\label{lem:complexity_online_regress}
There exists an  absolute constant $C > 0$ such that the computational complexity of OLRADRE is bounded by $C |\mathcal F|^3 N (\log_2 2n)^d$.
\end{lemma}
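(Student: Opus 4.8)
\textbf{Proof proposal for Lemma~\ref{lem:complexity_online_regress}.}

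The plan is to follow the template of the proof of Lemma~\ref{lem:complexity_online_mean}, replacing the running‑average bookkeeping on each dyadic rectangle by running least‑squares bookkeeping of size $|\mathcal F|$. The starting point is the purely combinatorial observation that, since $n = 2^k$, a fixed coordinate of $[n]$ belongs to exactly $k+1 = \log_2 2n$ dyadic subintervals --- one at each scale --- and hence a fixed point of $L_{d,n}$ belongs to exactly $(\log_2 2n)^d$ dyadic rectangles. Consequently the active set $A_t$ at round $t$ has $|A_t| \le (\log_2 2n)^d$, the pair $(t,S)$ with $\rho(t)\in S$ is encountered exactly once over the whole run, and $\sum_{t=1}^{N}|A_t| \le N (\log_2 2n)^d$. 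It therefore suffices to bound (i) the work done per pair $(t,S)$ by $O(|\mathcal F|^3)$, and (ii) the remaining per‑round overhead by $O(|A_t|)$.

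For (i), I would maintain incrementally, for every dyadic rectangle $S$, the regularized Gram matrix $M_S = I + \sum_{u} x_u x_u^{T}$ and the vector $b_S = \sum_u y_u x_u$, both sums running over the points $u$ of $S$ already revealed. When a new index $s = \rho(t) \in S$ is revealed, the rule \eqref{def:online_regression} calls for $\hat\beta_s \cdot x_s$ with $\hat\beta_s = (M_S + x_s x_s^{T})^{-1} b_S$ (note $b_S$ does not yet include $y_s x_s$). This is computed by: a rank‑one update $M_S \leftarrow M_S + x_s x_s^{T}$ at cost $O(|\mathcal F|^2)$; solving the $|\mathcal F|\times|\mathcal F|$ system $M_S \hat\beta_s = b_S$ by Gaussian elimination at cost $O(|\mathcal F|^3)$; forming $\hat\beta_s \cdot x_s$ at cost $O(|\mathcal F|)$; and applying $T_\lambda$ at cost $O(1)$. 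Once the adversary reveals $y_s = y_{\rho(t)}$, we perform the vector update $b_S \leftarrow b_S + y_s x_s$ at cost $O(|\mathcal F|)$. All of this is $O(|\mathcal F|^3)$ per pair $(t,S)$. (One could instead propagate $M_S^{-1}$ via the Sherman--Morrison formula and reduce this to $O(|\mathcal F|^2)$, but this refinement is not needed for the stated bound.)

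For (ii), within each round the aggregation and update steps 3--4 of the algorithm --- normalizing the weights $w_{S,t}$ over $S \in A_t$, forming the weighted truncated prediction $\hat y_{\rho(t)}$, computing the losses $\ell_{S,t}$, and performing the multiplicative‑weights update --- each cost $O(|A_t|)$, hence $O((\log_2 2n)^d)$ per round and $O(N(\log_2 2n)^d)$ in total, which is dominated by the bound from (i). The one‑time preprocessing --- evaluating $x_u = (f(u): f\in\mathcal F)$ for all $u$ at cost $O(N|\mathcal F|)$, and building for each point the list of dyadic rectangles containing it at cost $O(N(\log_2 2n)^d)$ --- is likewise dominated. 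Multiplying the per‑pair cost $O(|\mathcal F|^3)$ by $\sum_t |A_t| \le N(\log_2 2n)^d$ yields the claimed bound $C\,|\mathcal F|^3 N (\log_2 2n)^d$. There is no real obstacle here beyond careful bookkeeping: the only point that needs attention is that $M_S$ and $b_S$ are additive over the revealed points of $S$, so each arrival triggers only an $O(|\mathcal F|^2)$ rank‑one update and an $O(|\mathcal F|)$ vector update rather than a recomputation over the entire history of $S$ --- and a fresh $O(|\mathcal F|^3)$ linear solve per prediction already meets the target, so inverses need not even be propagated. The argument is essentially that of Lemma~\ref{lem:complexity_online_mean} with the scalar running mean on each rectangle replaced by a size‑$|\mathcal F|$ least‑squares solve, which accounts precisely for the extra $|\mathcal F|^3$ factor.
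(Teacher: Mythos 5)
Your proposal is correct and follows essentially the same route as the paper's proof: you maintain the regularized Gram matrix and the vector $\sum_u y_u x_u$ incrementally for each dyadic rectangle, use the fact that each point of $L_{d,n}$ lies in at most $(\log_2 2n)^d$ dyadic rectangles, and charge $O(|\mathcal F|^3)$ per (round, rectangle) pair for the linear solve, with the aggregation and weight-update overhead being lower order. The only cosmetic difference is that you solve the linear system by Gaussian elimination (and mention Sherman--Morrison as an optional refinement) where the paper explicitly forms the inverse; both cost $O(|\mathcal F|^3)$ and give the same bound.
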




We reiterate here that the set of basis functions can be taken to be anything (e.g relu functions, wavelet basis etc.) and yet a simultaneous oracle risk bound such as Theorem~\ref{thm:generic_online} will hold for 
the OLRADRE. We now move on to focus specifically on piecewise polynomial and 
univariate higher order bounded variation functions where OLRADRE performs near optimally.


\subsection{\textbf{Result for Rectangular Piecewise Polynomial Functions in General Dimensions}}\label{sec:piece_poly}
The setup for this subsection is essentially similar to that in Section~\ref{sec:piecewise_constant} except 
that $\theta^*$ can now be piecewise {\em polynomial} of degree at most $m$ on the (unknown) partition 
$P^*$.  More precisely, we let
\begin{equation}\label{def:F_piecepoly}
	\mathcal F \coloneqq  \Big\{ \Big(\frac{u}{n}\Big)^{\mb m} \coloneqq \prod_{i \in [d]} \Big(\frac{u_i}{n}\Big)^{m_i}: |\mb m| 
	\le m \Big\} 
\end{equation}
where $\mb m = (m_1, \ldots, m_d)$ is the {\em multidegree} of the monomial $\Big(\frac{u}{n}\Big)^{\mb 
	m}$ and $|\mb m| = \sum_{i \in [d]} m_i$ is the corresponding degree.

As before, an \textit{oracle predictor} $\hat{\theta}_{({\rm oracle})}$ --- which {\em knows} the minimal rectangular partition $(R_1,\dots,R_k)$ of $\theta^*$ {\em exactly} ---can simply use the online VAW online linear regression
rule given in~\eqref{def:online_regression} separately within each of the rectangles$(R_1,\dots,R_k).$ By a 
basic result about VAW online linear regression, (see Proposition~\ref{prop:online_regress_regret}), it can be shown 
that the MSE of this oracle predictor is bounded by $O_{d}(\frac{k \|\theta^*\|_{\infty} \log n}{N}).$ In words,  
the MSE of this oracle predictor again scales (up to a log factor which is necessary) like the \textit{number of 
	constant pieces of $\theta^*$ divided by the sample size $N$ which is precisely the parametric rate of convergence.} We will now state a result saying that OLRADRE, which is computationally efficient, can attain this oracle rate of convergence, up to certain additional multiplicative log factors.

Since any $\theta \in  \Theta_P$, where $P \in \mathcal P_{\all, \mathsf T}$ for some $\mathsf T \subset K$ 
(cf.~ the statement of Theorem~\ref{thm:pcconst}), is piecewise polynomial on $P$, we can associate to any 
such $\theta$ the number
\begin{equation}\label{def:sobolev_norm}
s_{m, \infty}(\theta) = \max_{S \in P,\, |\mb m| \le m} n^{|\mb m|}\beta_{\mb m, S} \mbox{ where } 
\theta_S \equiv \sum_{|\mb m| \le m} \beta_{\mb m, S} \, u^{\mb m}  =  \sum_{|\mb m| \le m} n^{|\mb m|} 
\beta_{\mb m, S} \, \Big(\frac{u}{n}\Big)^{\mb m}.\end{equation}
The reader should think of $\theta_s$ as $g( \frac sn)$ where $g$ is some piecewise polynomial function 
defined on the unit cube $[0, 1]^d$  and hence of $s_{m, \infty}(\theta)$ as its maximum coefficient which is a bounded number, i.e., it does not grow with $n$. Let us keep in mind that the OLRADRE depends on the underlying degree $m$ which we keep implicit in our discussions  below. We can now state the 
analogue of Theorem~\ref{thm:pcconst} in this case.


\begin{theorem}[Oracle Inequality for Arbitrary Rectangular Partitions]\label{thm:pcpoly}
Let $\mathsf T$ be any subset of $K$ and $\hat{\theta}^{OL}$ denote the OLRADRE predictor. Then there 
exist an absolute constant $C$ and a number $C_{m, d} > 1$ depending only on $m$ and $d$ such that for 
$\lambda \ge C (\sigma \sqrt{\log N} \vee \|\theta\|_{\infty})$, one has for any non-anticipating ordering $\rho$ of $L_{d, n}$,
\begin{align}\label{eq:generic_online_regr2}
\E \,\frac{1}{|\mathsf T|}\|\hat \theta^{OL}_{\mathsf T} - \theta_{\mathsf T}^* \|^2  \le \inf_{\substack{P \in {\mathcal 
P}_{\all,\mathsf T}\\ \theta \in \Theta_P \subset \R^{\mathsf T}}} &\big( \frac{1}{|\mathsf T|} \|\theta_{\mathsf 
T}^* - \theta \|^2 + C_{m,d} \lambda_{m, *}^2 \frac{|P|}{|\mathsf T|} (\log \e n)^d \log 2^d N \big) + 
\frac{\sigma^2 + \lambda^2}{|\mathsf T|^2}
	\end{align}
where $\lambda_{m, *} = \lambda + s_{m, \infty}(\theta)$.
\end{theorem}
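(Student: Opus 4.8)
The plan is to deduce \eqref{eq:generic_online_regr2} from the general simultaneous bound \eqref{eq:generic_online2} of Theorem~\ref{thm:generic_online}, specialised to the OLRADRE set-up of Section~\ref{sec:olradre}: $\mathcal S$ is the collection of dyadic sub-rectangles of $L_{d,n}$, $F_S$ is the span of the monomial features $\mathcal F$ restricted to $S$, and $\mb r$ is the VAW online rule \eqref{def:online_regression}. Fix $\mathsf T\subseteq K$; if $\mathsf T$ admits no partition into rectangles then $\mathcal P_{\all,\mathsf T}=\emptyset$ and the right-hand side of \eqref{eq:generic_online_regr2} is $+\infty$, so assume otherwise, and fix $P\in\mathcal P_{\all,\mathsf T}$ and $\theta\in\Theta_P$. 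The first step is a \emph{dyadic refinement} of $P$: each side $[a_i,b_i]\subseteq[n]$ of a rectangle $R=\prod_{i\in[d]}[a_i,b_i]\in P$ is a disjoint union of at most $2\log_2(2n)$ dyadic intervals, so $R$ is a disjoint union of at most $(2\log_2(2n))^d$ dyadic rectangles contained in $R$; collecting these over all $R\in P$ yields a partition $P'\in\mathcal P_{dp,\mathsf T}$ (which is exactly $\mathcal P_{\mathsf T}$ in the notation of Theorem~\ref{thm:generic_online}) with $|P'|\le (2\log_2(2n))^d\,|P|$. Since the restriction of a degree-$\le m$ polynomial to a sub-rectangle is again a degree-$\le m$ polynomial with the same coefficients, $\theta\in\Theta_{P'}$ and the value of $s_{m,\infty}(\theta)$ computed against $P'$ equals its value against $P$.

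The second step bounds $\overline{\mathcal R}(\theta,P')$ using the VAW regret bound, Proposition~\ref{prop:online_regress_regret}. For $S\in P'$, writing $\theta_S(u)=\beta^{(S)}\cdot x_u$ with $\beta^{(S)}=(n^{|\mb m|}\beta_{\mb m,S})_{|\mb m|\le m}$ and $x_u=((u/n)^{\mb m})_{|\mb m|\le m}$, the $u/n$ normalisation gives $\|x_u\|_\infty\le1$, hence the log-determinant term $\sum_t x_t^\top(I+\sum_{s\le t}x_sx_s^\top)^{-1}x_t\le|\mathcal F|\log(1+|\mathcal F|\,|S|)$ in the VAW bound is $O_{m,d}(\log n)$; together with $\|\beta^{(S)}\|^2\le|\mathcal F|\,s_{m,\infty}(\theta)^2$ this gives, for every ordering $\rho$ and every $S\in P'$,
\[
\sum_{t:\rho(t)\in S}\big(y_{\rho(t)}-\hat y^{(S)}_{\rho(t)}\big)^2 - \|y_S-\theta_S\|^2 \le |\mathcal F|\,s_{m,\infty}(\theta)^2 + C_{m,d}\Big(\max_{s\in\mathsf T}y_s^2\Big)\log N .
\]
Summing over the $|P'|$ pieces, taking expectations, and using the sub-Gaussian maximal inequality $\E\max_{s\in\mathsf T}y_s^2\le 2\|\theta^*\|_\infty^2+C\sigma^2\log(eN)\le C'\lambda^2$ (valid under the assumed lower bound on $\lambda$, since $|\mathsf T|\le N$) gives $\overline{\mathcal R}(\theta,P')\le C_{m,d}|P'|\lambda_{m,*}^2\log N\le C_{m,d}|P|(\log\e n)^d\lambda_{m,*}^2\log 2^dN$, the last inequality absorbing $(2\log_2(2n))^d$ into a $d$-dependent constant and using $s_{m,\infty}(\theta)^2\le\lambda_{m,*}^2$.

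The third step assembles. Since $|\mathcal S|=2^dN$ we have $\log\e|\mathcal S|\le C\log 2^dN$, and plugging $P'$ and $\theta$ into \eqref{eq:generic_online2},
\[
{\rm MSE}(\hat\theta^{OL}_{\mathsf T},\theta^*_{\mathsf T})\le\frac{1}{|\mathsf T|}\Big(\|\theta^*_{\mathsf T}-\theta\|^2+C\lambda^2|P'|\log\e|\mathcal S|+\overline{\mathcal R}(\theta,P')\Big)+\frac{\sigma^2+\lambda^2}{|\mathsf T|^2}.
\]
By the refinement bound on $|P'|$, the term $C\lambda^2|P'|\log\e|\mathcal S|$ is also at most $C_{m,d}\lambda_{m,*}^2|P|(\log\e n)^d\log 2^dN$; combining the two middle terms and taking the infimum over $P\in\mathcal P_{\all,\mathsf T}$ and $\theta\in\Theta_P$ recovers \eqref{eq:generic_online_regr2}. (The hypothesis $\lambda\ge C(\sigma\sqrt{\log N}\vee\|\theta^*\|_\infty)$ supplies the condition $\lambda\ge C(\sigma\sqrt{\log|\mathsf T|}\vee\|\theta^*\|_\infty)$ required to invoke \eqref{eq:generic_online2}.)

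I expect the second step to be the main point of care: obtaining the VAW regret bound with a leading constant that does not grow with $n$, and then integrating it against the sub-Gaussian noise. The two enabling ingredients are the $u/n$ normalisation of the monomials, which keeps the feature vectors bounded so that the log-determinant factor is $O_{m,d}(\log n)$, and the parametrisation of the comparator through $s_{m,\infty}(\theta)$, which keeps $\|\beta^{(S)}\|^2$ under control; with these in hand the assumed lower bound on $\lambda$ turns the random factor $\max_s y_s^2$ into $O(\lambda^2)$ in expectation. The dyadic-refinement step is routine but is exactly what accounts for the extra $(\log\e n)^d$ factor relative to the idealised oracle rate $\tilde{O}(|P|/|\mathsf T|)$.
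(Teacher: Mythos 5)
Your proposal is correct and follows essentially the same route as the paper: invoke the general bound \eqref{eq:generic_online2}, control $\overline{\mathcal R}(\theta,P)$ via the Vovk--Azoury--Warmuth regret bound of Proposition~\ref{prop:online_regress_regret} (using the $u/n$ normalisation of the features and the $s_{m,\infty}$ control on the comparator coefficients, then integrating $\max_s y_s^2$ against the sub-Gaussian tail), and pass from $\mathcal P_{\all,\mathsf T}$ to a dyadic refinement at the cost of a $(\log \e n)^d$ inflation of $|P|$. You have simply written out in full the steps the paper compresses into ``similar arguments as in \eqref{eq:online_mean_regret_bnd}'' and ``similar to the proof of Theorem~\ref{thm:pcconst}.''
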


We now make some remarks about this theorem.

\begin{remark}
	We are not aware of such a simultaneous oracle risk bound explicitly stated before in the literature for piecewise polynomial signals in general dimensions in the online learning setting. 
	\end{remark}

\begin{remark}
	Even in the batch learning setting, the above oracle inequality is a stronger result than available results for higher order Dyadic CART or ORT~(\cite{chatterjee2019adaptive}) in the sense that the infimum is taken over the space of all rectangular partitions $\mathcal{P}_{\all}$ instead of a more restricted class of partitions. 
	\end{remark}

\subsection{\textbf{Result for Univariate Functions of Bounded Variation of Higher Orders}}\label{sec:univar}
One can consider the univariate function 
class of all $m$ times (weakly) differentiable functions, whose $m$ th derivative is of bounded variation. This is also a canonical function class in offline nonparametric regression. A seminal result of~\cite{donoho1998minimax} shows that a wavelet threshholding estimator attains the minimax rate in this problem. Locally adaptive regression splines, proposed by~\cite{mammen1997locally}, is also known to achieve the minimax rate in this problem. 
Recently, Trend Filtering, proposed by~\cite{kim2009ell_1}, has proved to be a popular nonparametric regression method. Trend Filtering is very closely related to locally adaptive regression splines and is also minimax rate optimal over the space of higher order bounded variation functions; see~\cite{tibshirani2014adaptive} and references therein. Moreover, it is known that Trend Filtering adapts to functions which are piecewise polynomials with regularity at the knots. If the number of pieces is not too large and the length of the pieces is not too small, a well tuned Trend Filtering estimator can attain near 
parametric risk as shown in~\cite{guntuboyina2020adaptive}. In the online learning setting, this function 
class has been studied recently by \cite{baby2020adaptive} using online wavelet shrinkage methods. We 
now state a spatially adaptive oracle risk bound attained by the OLRADRE  for this function class.

Let $K = L_{1, n} = [[1, n]]$ and for any vector $\theta \in \R^n$, let us define its $m$-th 
order (discrete) derivative for any integer $r \ge 0$ in a recursive manner as follows. We start with $D^{(0)}(\theta) = \theta$ and $D^{(1)}(\theta) = (\theta_2 - 
\theta_1,\dots,\theta_n - \theta_{n - 1})$. Having defined $D^{(m-1)}(\theta)$ for some $m \ge 
2$, we set $D^{(m)}(\theta) = D^{(1)}(D^{(m - 1)}(\theta))$. Note that $D^{(m)}(\theta) \in 
\R^{n - m}$. For sake of convenience, we denote the operator $D^{(1)}$ by $D$. For any positive integer 
$m \geq 1$, let us also define the $m$-th order variation of a vector $\theta$ as follows:
\begin{equation}\label{eq:rth_orderTV}
V^{(m)}(\theta) = n^{m - 1} |D^{(m)}(\theta)|_{1}
\end{equation}
where $|.|_1$ denotes the usual $\ell^1$-norm of a vector. Notice that $V^{1}(\theta)$ is the total variation of a vector defined in~\eqref{eq:TVdef}. Like our definition of total 
variation, our definition in \eqref{eq:rth_orderTV} is also motivated by the analogy with the 
continuum. If we think of $\theta$ as an evaluation of an $m$ times differentiable function 
$f:[0,1] \rightarrow \R$ on the grid $(1/n,2/n\dots,n/n)$, then the Reimann approximation to 
the integral $\int_{[0,1]} f^{(m)}(t) dt$ is precisely equal to $V^{(m)}(\theta)$. Here 
$f^{(m)}$ denotes the $m$-th order derivative of $f$. Thus, the reader should assume that 
$V^{(m)}(\theta)$ is of constant order for a generic $\theta$. 
Analogous to the class $\mathcal{BV}_{d, n}(V^*)$, 
let us define for any integer $m \ge 1$,
\begin{equation*}
	\mathcal{BV}^{(m)}_{n}(V^*) = \{\theta \in \R^n:  V^{(m)}(\theta)\leq V^*\}.
\end{equation*}

In the spirit of our treatment of the class $\mathcal{BV}_{d, n}(V^*)$ in Section~\ref{sec:TV}, we take 
$$\mathcal F = \{1, x, \ldots, x^{m-1}\}.$$  We now state the main result of this subsection.
\begin{theorem}[Prediction error for $\mathcal{BV}_{N}^{(m)}(V^*)$, $m > 1$]
	\label{thm:trendfilter_slow_rate}	
	Fix any interval $\mathsf T \subset K$ and denote $V_{\mathsf T}^* = V^{(m)}(\theta^*_{\mathsf T})$. Also let $\|\theta^*\|_{m-1, \infty} \coloneqq \max_{0 \le j < m}N^{j}\|D^{j}(\theta^*)\|_{\infty}$. 
	Then there exist an absolute constant $C$ and a number $C_m > 1$ depending only on $m$ such that for $\lambda \ge C (\sigma \sqrt{\log N} \vee \|\theta^*\|_{\infty})$, we have 
	for any non-anticipating ordering $\rho$ of $L_{1, n}$,
	\begin{equation}\label{eq:trendfilter_slow_rate_d=1}
		\E \, \frac{1}{|\mathsf T|}\| \hat \theta_{\mathsf T }^{OL}  - \theta_{\mathsf T}\|^2 \le C_m\, \lambda_{m, *} ^{\frac{4m}{2m + 1}} \log \e N 
		\left(\frac{(V_{\mathsf T}^*)^{1/m}}{|\mathsf T|}\right)^{\frac{2m}{2m + 1}} + \, \frac{\sigma^2 + \lambda^2}{|\mathsf T|^2}
	\end{equation} 
where $\lambda_{m, *} \coloneqq \lambda + \|\theta^*\|_{m-1, \infty}$ (cf.~the statement of 
Theorem~\ref{thm:pcpoly}).
\end{theorem}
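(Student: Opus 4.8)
The overall strategy is to reduce the bounded variation statement to the piecewise polynomial oracle inequality of Theorem~\ref{thm:pcpoly} (specialized to $d=1$, degree $m-1$), and then to optimize the resulting bound over a well-chosen partition of the interval $\mathsf T$. Concretely, I would start from \eqref{eq:generic_online_regr2} with $d=1$: for any interval $\mathsf T$ and any partition $P$ of $\mathsf T$ into subintervals together with any $\theta \in \Theta_P$,
\begin{align*}
\E\,\frac{1}{|\mathsf T|}\|\hat\theta^{OL}_{\mathsf T}-\theta^*_{\mathsf T}\|^2 \le \frac{1}{|\mathsf T|}\|\theta^*_{\mathsf T}-\theta\|^2 + C_m\,\lambda_{m,*}^2\,\frac{|P|}{|\mathsf T|}\,\log\e N + \frac{\sigma^2+\lambda^2}{|\mathsf T|^2},
\end{align*}
where $\Theta_P$ here is the set of arrays that are polynomial of degree $\le m-1$ on each piece of $P$. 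So the task is to find, for each $k$, a partition $P$ with $|P|=k$ and a piecewise-polynomial $\theta$ such that the approximation error $\|\theta^*_{\mathsf T}-\theta\|^2$ is small in terms of $V^*_{\mathsf T}=V^{(m)}(\theta^*_{\mathsf T})$.

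The key approximation-theoretic input is a discrete analogue of the classical fact that a function with $\int |f^{(m)}| \le V$ on an interval can be approximated in sup-norm to accuracy $O(V (h/|\mathsf T|)^{m})$ on each subinterval of length $h$ by a degree-$(m-1)$ polynomial (Taylor/spline approximation). I would invoke (or prove, presumably it is in the appendix of \cite{chatterjee2019adaptive} or stated as a lemma there) the bound: there is a partition of $\mathsf T$ into $k$ subintervals and a piecewise degree-$(m-1)$ polynomial $\theta$ on it with
\begin{align*}
\|\theta^*_{\mathsf T}-\theta\|_\infty \le C_m\,\frac{V^*_{\mathsf T}}{N^{m-1}}\left(\frac{|\mathsf T|}{k}\right)^{m-1}\cdot\frac{1}{?}
\end{align*}
— more precisely, partitioning into $k$ equal pieces and using that on a piece of length $\ell$ the local $m$-th order variation times $\ell^{m-1}$ controls the polynomial approximation error in sup norm, so that $\|\theta^*_{\mathsf T}-\theta\|^2 \le |\mathsf T|\cdot \big(C_m V^*_{\mathsf T}/k^{m}\big)^2$ after accounting for the normalization in $V^{(m)}$. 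Plugging this into the display above gives a bound of the form
\begin{align*}
\E\,\frac{1}{|\mathsf T|}\|\hat\theta^{OL}_{\mathsf T}-\theta^*_{\mathsf T}\|^2 \lesssim_m \Big(\frac{V^*_{\mathsf T}}{k^{m}}\Big)^2 + \lambda_{m,*}^2\,\frac{k}{|\mathsf T|}\,\log\e N + \frac{\sigma^2+\lambda^2}{|\mathsf T|^2}.
\end{align*}
Optimizing the trade-off between the two leading terms by choosing $k \asymp \big((V^*_{\mathsf T})^{2}|\mathsf T|/(\lambda_{m,*}^2\log\e N)\big)^{1/(2m+1)}$ yields the exponent $\tfrac{2m}{2m+1}$ on $(V^*_{\mathsf T})^{1/m}/|\mathsf T|$ and the exponent $\tfrac{4m}{2m+1}$ on $\lambda_{m,*}$, matching \eqref{eq:trendfilter_slow_rate_d=1}; one also checks $k \le |\mathsf T|$ so the partition is legitimate (if the optimal $k$ exceeds $|\mathsf T|$ the bound is trivial since then $|\mathsf T|$ is small and the last term dominates).

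The main obstacle — and the step requiring real care — is the discrete approximation estimate: controlling how well a vector $\theta^*_{\mathsf T}$ with bounded discrete $m$-th order variation can be approximated piecewise by discrete polynomials, with explicit dependence on the piece lengths and on $V^*_{\mathsf T}$, and ensuring the polynomial coefficients stay bounded so that $s_{m-1,\infty}(\theta)$ (hence $\lambda_{m,*}$ in Theorem~\ref{thm:pcpoly}) is genuinely of order $\lambda + \|\theta^*\|_{m-1,\infty}$ rather than something that blows up with $N$. This is where one needs a careful summation-by-parts / discrete Taylor expansion argument, and where the factor $N^{m-1}$ in the definition \eqref{eq:rth_orderTV} of $V^{(m)}$ has to be tracked precisely. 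The other, more routine, obstacle is bookkeeping the constants $C_m$ and verifying that the $\log \e N$ factor (rather than a higher power) suffices; this follows from the fact that $|\mathcal S| = 2N$ for $d=1$ so $\log\e|\mathcal S| \asymp \log\e N$, and that we only pay one factor of $|P|\log\e|\mathcal S|$ in \eqref{eq:generic_online_regr2}, together with the bound on $\overline{\mathcal R}(\theta,P)$ coming from Proposition~\ref{prop:online_regress_regret} being of the same order.
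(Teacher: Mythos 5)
Your proposal is correct and takes essentially the same route as the paper: the paper also specializes the general oracle inequality to $d=1$ with degree-$(m-1)$ polynomial experts, invokes a piecewise-polynomial approximation result for $V^{(m)}$-bounded vectors (Proposition~\ref{prop:piecewise}, imported from \cite{chatterjee2019adaptive}, whose item~(c) supplies exactly the coefficient bound you flag as the delicate point needed to keep $\lambda_{m,*}$ of order $\lambda + \|\theta^*\|_{m-1,\infty}$), and then balances the approximation and complexity terms, parametrizing by the sup-norm accuracy $\delta$ (with $|P|$ of order $\delta^{-1/m}$) rather than by the number of pieces $k$, which is equivalent. No gaps.
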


We now make some remarks about the above theorem.

\begin{remark}
	The above spatially adaptive risk bound for bounded variation functions of a general order is new even in the easier batch learning setting. State of the art batch learning estimators like Trend Filtering or Dyadic CART are not known to attain such a spatially adaptive risk bound. 
\end{remark}

%

\section{Discussion}
In this section we discuss some natural related matters.

\subsection{Detailed Comparison with Dyadic CART} \label{sec:discuss}

The Dyadic CART is a natural offline analogue of the OMADRE described in 
Section~\ref{sec:omadre}. Similarly, higher order versions of Dyadic CART and Trend Filtering are natural 
offline analogues of the univariate piecewise polynomial OLRADRE  described in 
Section~\ref{sec:olradre}. Therefore, it makes sense to compare our oracle risk bound (notwithstanding 
simultaneity and the fact that OMADRE/OLRADRE are online algorithms) in 
Theorems~\ref{thm:pcconst},~\ref{thm:pcpoly},~\ref{thm:trendfilter_slow_rate} with the available offline 
oracle risk bound for Dyadic CART, see Theorem $2.1$ in~\cite{chatterjee2019adaptive}.
This result is an oracle risk bound  where the infimum is over all recursive dyadic partitions (see a precise definition in Section of~\cite{chatterjee2019adaptive}) of $L_{d,n}.$ On the other hand,  our oracle risk bounds are essentially an infimum over all dyadic partitions $\mathcal{P}_{dp}.$ In dimensions $d = 1,2$ these two classes of partitions coincide (see Lemma $8.2$ in~\cite{chatterjee2019adaptive}) but for $d > 2$, the class of partitions $\mathcal{P}_{dp}$ strictly contain the class of recursive dyadic partitions (see Remark $8.3$ in~\cite{chatterjee2019adaptive}). Therefore, the oracle risk bounds in Theorems~\ref{thm:pcconst},~\ref{thm:pcpoly} are stronger in this sense.

The above fact also allows us to convert the infimum over all dyadic partitions $\mathcal{P}_{dp}$ to the space of all rectangular partitions $\mathcal{P}_{\all}$ since any partition in $\mathcal{P}_{\all}$ can be refined into a partition in $\mathcal{P}_{dp}$ with the number of rectangles inflated by a $(\log n)^d$ factor. 
In dimensions $d \geq 3$, such an offline oracle risk bound (where the infimum is over $\mathcal{P}_{\all}$) is not known for Dyadic CART. As far as we are aware, the state of the art result here is shown in~\cite{chatterjee2019adaptive} where the authors show that a significantly more computationally intensive version of Dyadic CART, called the ORT estimator is able to adaptively estimate signals which are piecewise constant on fat partitions. In contrast, Theorems~\ref{thm:pcconst},~\ref{thm:pcpoly} hold for all dimensions $d$, the infimum in the oracle risk bound is over the set of all rectangular partitions $\mathcal{P}_{\all}$ and no fatness is needed.

It should also be mentioned here  that compared to batch learning bounds for Dyadic CART, our bounds have an extra log factor and some signal dependent factors which typically scale like $O(1).$ Note that the computational complexity of our algorithm is also worse by a factor $(\log n)^d$, compare Lemma~\ref{lem:complexity_online_mean} to Lemma $1.1$ in~\cite{chatterjee2019adaptive}. However, it should be kept in mind that we are in the online setup which is a more difficult problem setting than the batch learning setting.

\subsection{Some Other Function Classes} \label{sec:otherfunctons}
Our simultaneous oracle risk bounds are potentially applicable to other function classes as well not considered in this paper. We now mention some of these function classes.

A similar batch learning oracle risk bound with an infimum over the set of all recursive dyadic partitions was used by Donoho (1997) to demonstrate minimax
rate optimality of Dyadic CART for some anisotropically smooth bivariate function classes. Using our result, it should be possible to attain a simultaneous version of minimax rate optimal bounds for these types of function classes.

Consider the class of bounded monotone signals on $L_{d,n}$ defined as
\begin{equation*}
	\mathcal{M}_{d,n} = \{\theta \in [0,1]^{L_{n,d}}: \theta[i_1,\dots,i_d] \leq \theta[j_1,\dots,,j_d] \:\:\text{whenever}\:\: i_1 \leq j_1,\dots,i_d \leq j_d\}.
\end{equation*}
Estimating signals within this class falls under the purview of Isotonic Regression. Isotonic Regression has been a topic of recent interest in the online learning community; see~\cite{kotlowski2016online},~\cite{kotlowski2017random}. It can be checked that the total variation for any $d$ dimensional isotonic signal with range $O(1)$ grows like $O(n^{d - 1})$ which is of the same order as a canonical bounded variation function. Therefore, the bound in Theorem~\ref{thm:pcconst} would give spatially adaptive minimax rate optimal bounds for Isotonic Regression as well. In the offline setup, a lot of recent papers have investigated Isotonic regression with the aim of establishing minimax rate optimal rates as well as near optimal adaptivity to rectangular piecewise constant signals; see~\cite{deng2020isotonic},~\cite{han2019isotonic}. Theorem~\ref{thm:pcconst} establishes that such adaptivity to rectangular piecewise constant signals as well as maintaining rate optimality over isotonic functions is also possible in the online setting by using the OMADRE proposed here.

Let us now consider univariate convex regression. In the offline setting, it is known that the least squares estimator LSE is minimax rate optimal, attaining the $\tilde{O}(n^{-4/5})$ rate, over convex functions with bounded entries, see e.g.~\cite{GSvex},~\cite{chatterjee2016improved}. It is also known that the LSE attains the $\tilde{O}(k/n)$ rate if the true signal is piecewise linear in addition to being convex. Theorem~\ref{thm:pcpoly} and Theorem~\ref{thm:trendfilter_slow_rate} imply both these facts also hold for the OLRADRE (since a convex function automatically has finite second order bounded variation) where we fit linear functions (polynomial of degree $1$) on intervals. To the best of our knowledge, such explicit guarantees for online univariate convex regression were not available in the literature before this work.

\subsection{Computation Risk Tradeoff}
The main reason for us considering dyadic rectangles (instead of all rectangles) as experts is to save computation. In particular, if one uses the set of all rectangles as experts, the computational complexity of the resulting algorithm would be $O_{d}(N^3)$. One can think of this estimator as the online analogue of the ORT estimator defined in~\cite{chatterjee2019adaptive}. For this estimator, the risk bounds would be better. For example, the $(\log n)^d$ term multiplying $|P|$ in the bound in Theorems~\ref{thm:pcconst},~\ref{thm:pcpoly} would now no longer be present. In particular, the exponent of $\log n$ would be $2$ for all dimensions $d$ which is only one log factor more than a known minimax lower bound for the space of all rectangular piecewise constant functions; see Lemma $3.1$ in~\cite{chatterjee2019adaptive}.

One can also easily interpolate and take the set of experts somewhere between the set of dyadic rectangles and the set of all rectangles, say by considering all rectangles with side lengths a multiple of some chosen integer $l$. Thus one can choose the set of experts by trading off computational time and the desired statistical prediction performance.

\subsection{Open Problems}
In our opinion, our work here raises some interesting open questions which we leave for future research. 
\begin{enumerate}
	\item It appears that if a function class is well approximable by rectangular piecewise constant/polynomial functions then the type of oracle risk bounds proved here may be used to derive some nontrivial prediction bounds. However, for many function classes, this kind of approximability may not hold. For example, we 
	can consider the class of Hardy Krause Bounded Variation Functions~(see~\cite{fang2021multivariate}) or its higher order versions~(see~\cite{ki2021mars}) where the existing covering argument produces nets (to estimate metric entropy) which are not necessarily rectangular piecewise constant/linear respectively. 
	These function classes are also known not to suffer from the curse of dimensionality in the sense that the metric entropy does not grow exponentially in $\frac{1}{\epsilon}$ with the dimension $d$. More generally, it would be very interesting to come up with computationally efficient and statistically rate optimal online 
	prediction algorithms for such function classes. 
	
	\medskip

	\item The analysis presented here relies a lot on the light tailed nature of the noise. It can be checked that Theorem~\ref{thm:generic_online} can also be proved when the noise is mean $0$ sub exponential, we would only get an appropriate extra log factor. However, the proof would break down for heavy tailed noise. This seems to be an open area and not much attention has been given to the noisy online prediction problem with heavy tailed noise. Most of the existing results in the online learning community assume bounded but arbitrary data. The heavy tailed setting we have in mind is that the data $y$ is not arbitrary but of the form signal plus noise, except that the noise can be heavy tailed. It would be very interesting to obtain an analogue of Theorem~\ref{thm:generic_online} in this setting. Clearly, the algorithm has to change as well in the sense that instead of aggregating means one should aggregate medians of various rectangles in some appropriate way. 
	
		\medskip

\item Another important aspect that we have not discussed here is the issue of choosing the 
tuning/truncation parameter $\lambda$ in a {\em data driven} manner. It is possibly natural to choose a grid 
of candidate truncation values and run an exponentially weighted aggregation algorithm aggregating the 
predictions corresponding to each truncation value. This approach was already considered in 
\cite{baby2021optimal} (see Section~4). However, since our data is unbounded, we run into the same issue 
of choosing an appropriate tuning parameter. It is an important research direction to investigate whether the 
recent developments in the cross validation there are any other natural ways to address this problem.

\end{enumerate}

\section{Simulations}

\subsection{1D Plots}

We provide plots of the OMADRE for a visual inspection of its performance. There are three plots for scenarios corresponding to different true signals $\theta^*$, where for any $i \in [n]$, we have $\theta^*_i = f(i/n)$ for some function $f : [0,1] \to \R$, specified below and the errors are generated from $N(0, 1)$. The sample size is taken to be $n = 2^{16}$ for these plots, given in Figure~\ref{fig:tv1}. The truncation parameter $\lambda$ has been taken to be $2 \max\{\|\theta^*\|_{\infty}, \sigma \left(2 \log n\right)^{1/2}\}$ for all our $1D$ simulations. It may be possible to get better predictions by choosing a smaller value of $\lambda$ but we have not done any systematic search for these simulations as this particular choice seemed to work well.

The ordering of the revealed indices is taken to be the forward ordering $1,2,3,\dots$ and the backward ordering $n,n - 1,n - 2,\dots$. The predictions corresponding to the two orderings are then averaged in the plots. 



\begin{enumerate}
	\item Scenario 1 [Piecewise Constant Signal]: We consider the piecewise constant function 
	$$f(x) = 2(\mathrm{1}(x \in [1/5, 2/5])) + \mathrm{1}(x \in [2/5, 3/5]) + 2\mathrm{1}(x \in [3/5, 4/5]),$$
	and consider the the 1D OMADRE. The corresponding plot is shown in the second diagram of Figure~\ref{fig:tv1}.
	\item Scenario 2 [Piecewise Linear Signal]: We consider the piecewise linear function 
	$$f(x) = 6x(\mathrm{1}(x \in [0, 1/3])) + (-12x + 6)\mathrm{1}(x \in [1/3, 2/3]) + (x - 8/3)(\mathrm{1}(x \in [2/3, 1])),$$
	and consider the 1D OMADRE. The corresponding plot is shown in the second diagram of Figure~\ref{fig:tv1}.
	\item Scenario 3 [Piecewise Quadratic Signal]: We consider the piecewise quadratic function 
	$$f(x) = 
	\begin{cases}
	18x^2 \quad &\text{if}\;\; x \in [0, 1/3]\\
	-36(x-1/2-1/\sqrt{12})(x-1/2+\sqrt{12}) \quad &\text{if}\;\; x \in [1/3, 2/3]\\
	18(x-1)^2 \quad &\text{if}\;\; x \in [2/3, 1]
	\end{cases}.
	$$
	and consider the 1D OMADRE estimator. The corresponding plot is shown in the third diagram of Figure~\ref{fig:tv1}.
	
\end{enumerate}

\begin{figure}[H]
	\centering
	\includegraphics[scale = 0.25]{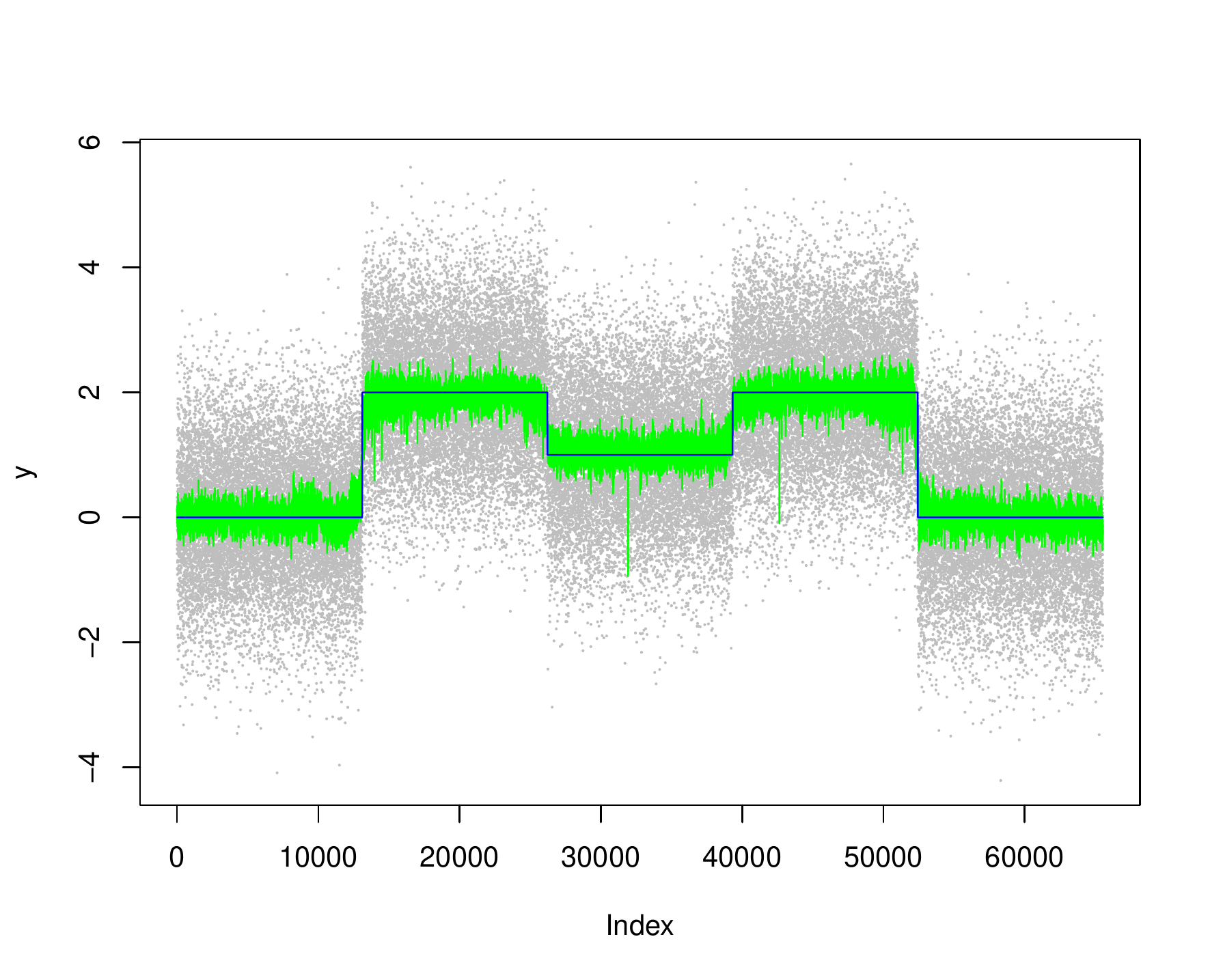} \includegraphics[scale = 0.25]{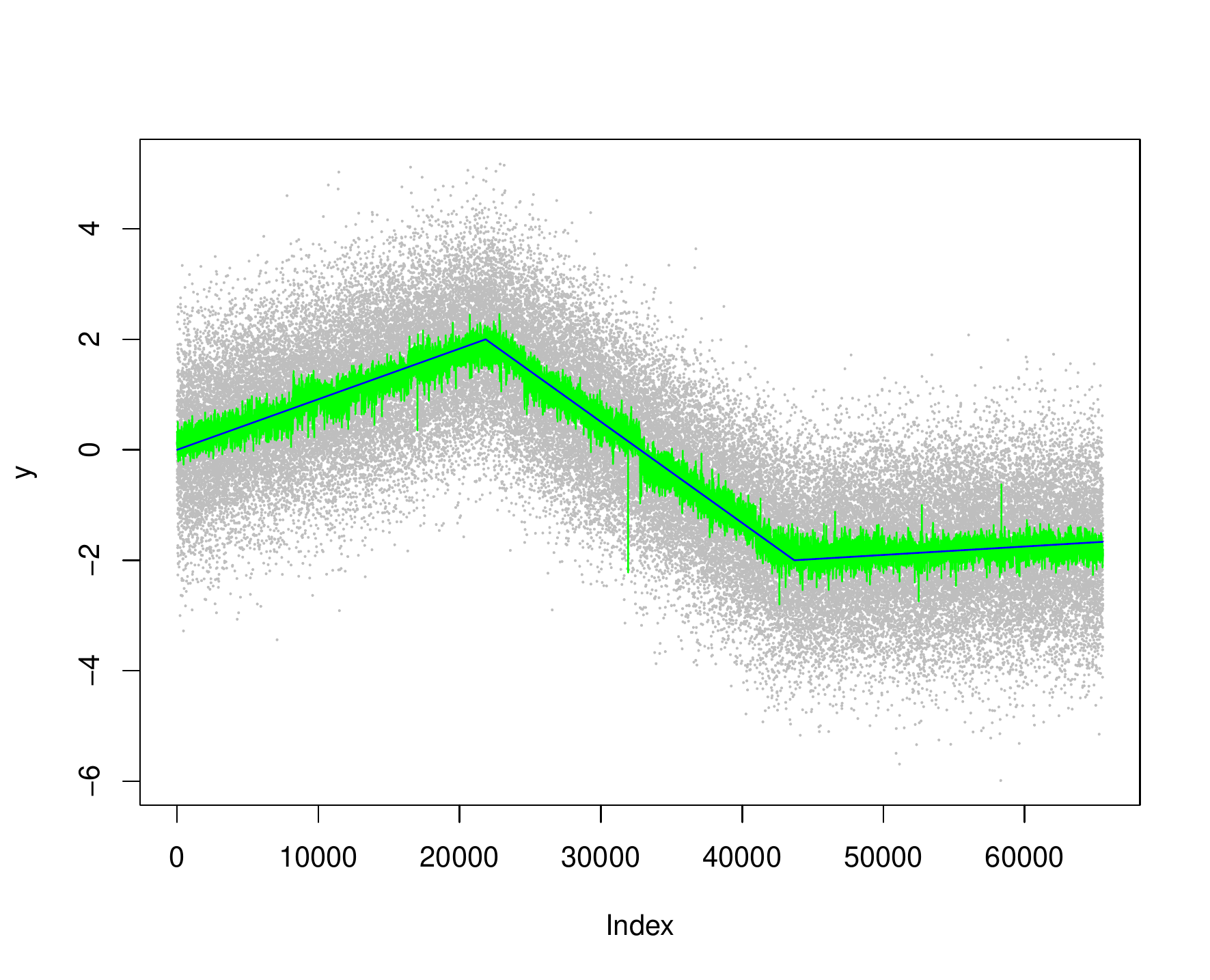} \includegraphics[scale = 0.25]{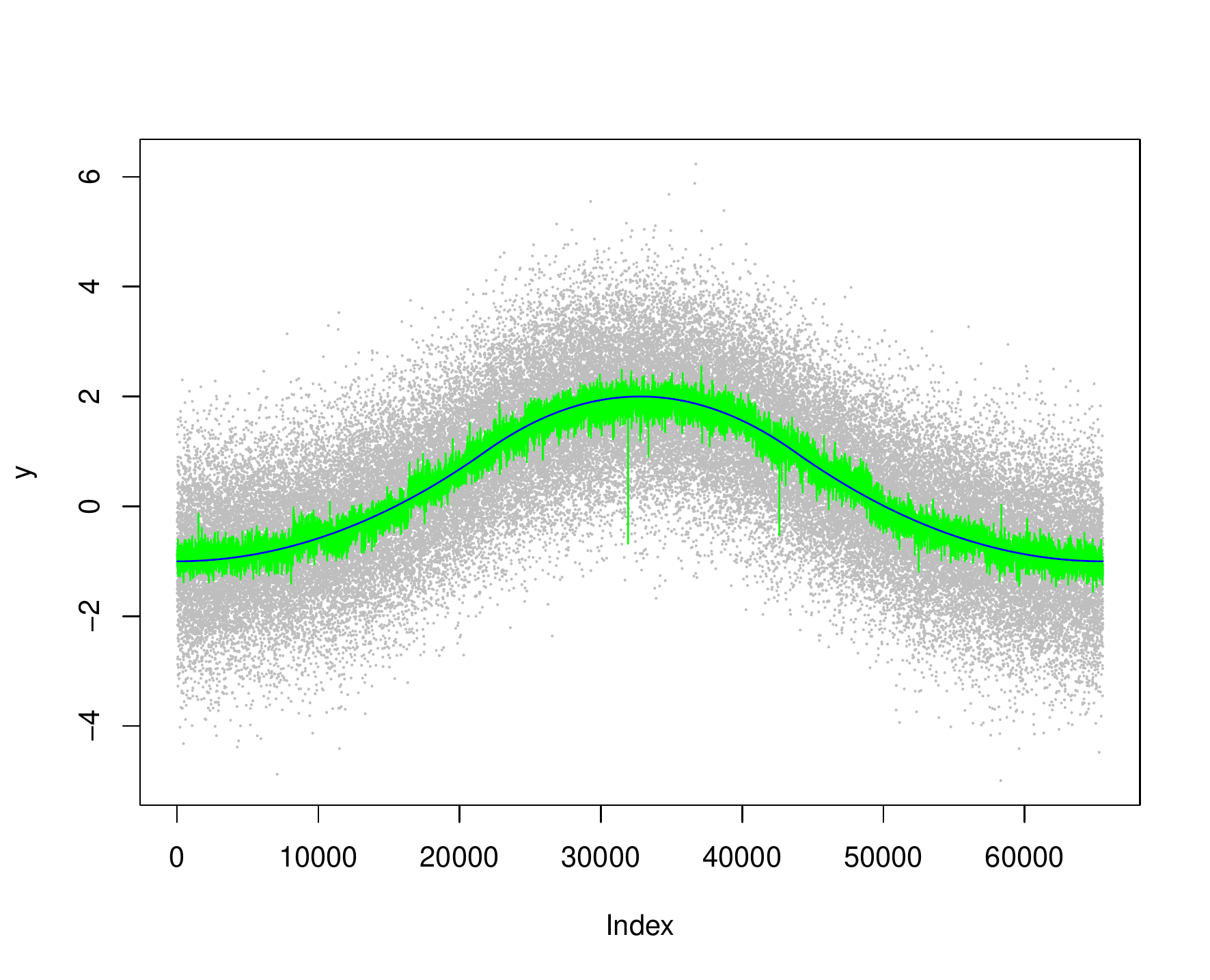}
	\caption{The blue curve is the true signal, the grey points are data points and the green curve constitutes the OMADRE predictions. The plotted predictions are averaged over two predictions when the data are revealed in the forward and backward order.}
	\label{fig:tv1}
\end{figure}

\subsection{1D Comparisons}
We conduct a simulation study to compare the performance of the OMADRE  and the OLRADRE of order $1,2$ which aggregates linear function predictions and quadratic function predictions. We consider the ground truth signal as the smooth sinusoidal function $$f(x) = \sin 2 \pi x + \cos 5 \pi x.$$ We considered various signal to noise ratios by setting the noise standard deviation $\sigma$ to be $0.5,1$ or $2$. We also considered sample sizes $n = 2^{10},2^{12},2^{14}.$ In each case, we estimated the MSE by 50 Monte Carlo replications. Here also, the predictions corresponding to the forward and backward orderings are averaged. We report the MSE's in Tables~\ref{tab:dc1},~\ref{tab:dc2} and~\ref{tab:dc3} respectively.

\begin{table}[H]
	\caption{MSEs of OMADRE estimator in different scenarios}
	\label{tab:dc1}
	\begin{tabular}{c|c|c|c}
		\hline
		$n$ &$\sigma = 0.5$  &$\sigma = 1$&$\sigma = 2$\\
		\hline
		$2^{10}$ & 0.045 & 0.076 & 0.191 \\
		$2^{12}$ & 0.027 & 0.048 & 0.127 \\
		$2^{14}$ & 0.014 & 0.031 & 0.087 \\
		\hline
	\end{tabular}
\end{table}

\begin{table}[H]
	\caption{MSEs of OLRADRE (linear) in different scenarios}
	\label{tab:dc2}
	\begin{tabular}{c|c|c|c}
		\hline
		$n$ &$\sigma = 0.5$  &$\sigma = 1$&$\sigma = 2$\\
		\hline
		$2^{10}$ & 0.088 & 0.099 & 0.143 \\
		$2^{12}$ & 0.049 & 0.057 & 0.087 \\
		$2^{14}$ & 0.025 & 0.030 & 0.050 \\
		\hline
	\end{tabular}
\end{table}

\begin{table}[H]
	\caption{MSEs of OLRADRE (quadratic) in different scenarios}
	\label{tab:dc3}
	\begin{tabular}{c|c|c|c}
		\hline
		$n$ &$\sigma = 0.5$  &$\sigma = 1$&$\sigma = 2$\\
		\hline
		$2^{10}$ & 0.079 & 0.091 & 0.136 \\
		$2^{12}$ & 0.040 & 0.048 & 0.079 \\
		$2^{14}$ & 0.020 & 0.025 & 0.044 \\
		\hline
	\end{tabular}
\end{table}

It is reasonable to expect that the OLRADRE  aggregating quadratic function predictions would perform no worse than the OLRADRE  aggregating linear function predictions which in turn would perform no worse than the OMADRE estimator. 
From the tables~\ref{tab:dc1},~\ref{tab:dc2} and~\ref{tab:dc3} we see that when the noise variance is low, the opposite happens and the OMADRE gives a better performance. It is only when the noise variance becomes high, the OLRADRE aggregating quadratic functions starts to perform the best. We see a similar phenomenon for other ground truth functions as well. We are not sure what causes this but we believe that in the low noise regime, the weights of the local experts are high (for the OMADRE estimator) and for smooth functions these predictions would be very accurate. Since the OLRADRE  has a shrinkage effect (note the presence of $I$ in the gram matrix), there is bias for the predictions of the local experts which is why the local experts in this case predict slightly worse than for the OMADRE estimator. In the case when the signal to noise ratio is low, the algorithms are forced to use experts corresponding to wider intervals for which case the bias of the OLRADRE predictions become negligible.

\subsection{2D Plots}
We conduct a simulation study to observe the performance of the proposed OMADRE estimator in three different scenarios each corresponding to a different true signal $\theta^*$. In every case, the errors are generated from a centered normal distribution with standard deviation $0.25$, the dimension $d = 2$ and we take the number of pixels in each dimension to be $n = 64, 128, 256$. We estimate the MSE by $50$ Monte Carlo replications and they are reported in Table~\ref{tab:omadre2d}. The truncation parameter $\lambda$ has been taken to be $2 \max\{\|\theta^*\|_{\infty}, \sigma \left(2 \log(n^2)\right)^{1/2}\}.$
In each of the cases, a uniformly random ordering of the vertices of $L_{2,n}$ has been taken to construct the OMADRE estimator. Overall, we see that our OMADRE estimator performs pretty well. 

\begin{enumerate}
	\item Scenario 1 [Rectangular Signal]: The true signal $\theta^*$ is such that for every $(i_1, i_2) \in L_{2, n}$, we have
	$$\theta^*_{(i_1, i_2)} = 
	\begin{cases}
	1 \quad &\text{if} \;\; n/3 \leq i_1, i_2 \leq 2n/3\\
	0 \quad &\text{otherwise}
	\end{cases}.$$
	The corresponding plots are shown in Figure~\ref{fig:dc_box} when $n = 256$.
\begin{figure}[H]
	\centering
	\includegraphics[scale = 0.25]{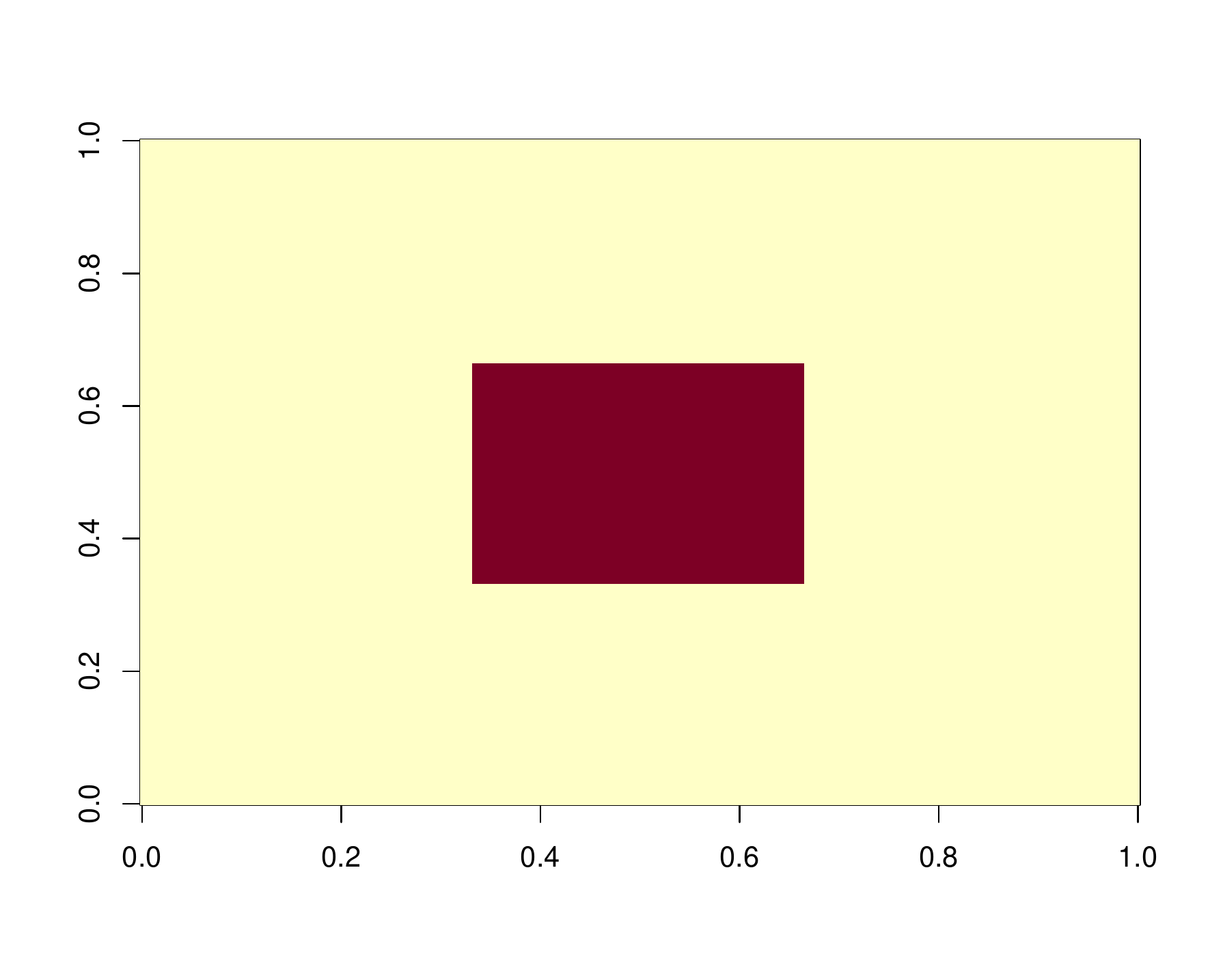} \includegraphics[scale = 0.25]{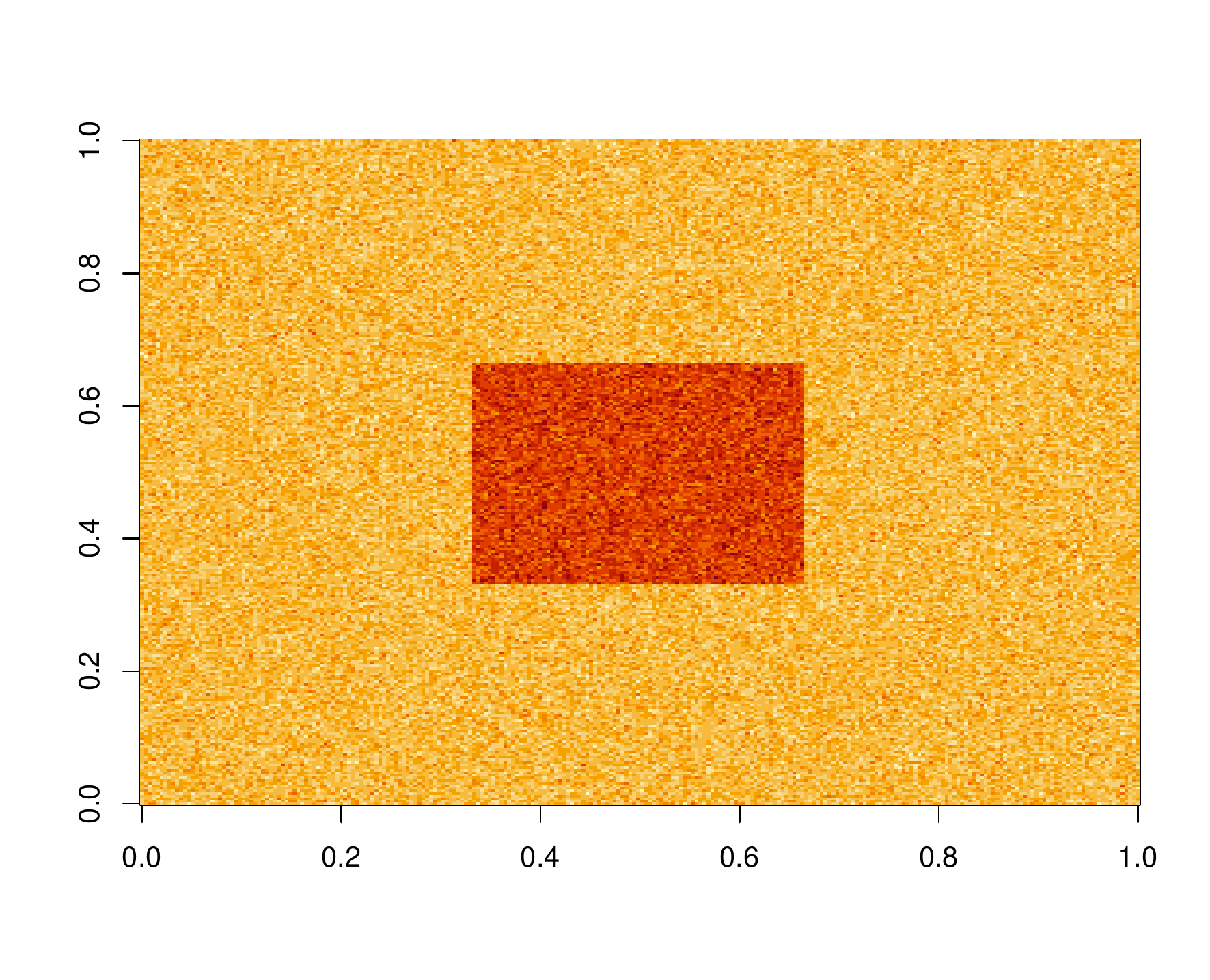} \includegraphics[scale = 0.25]{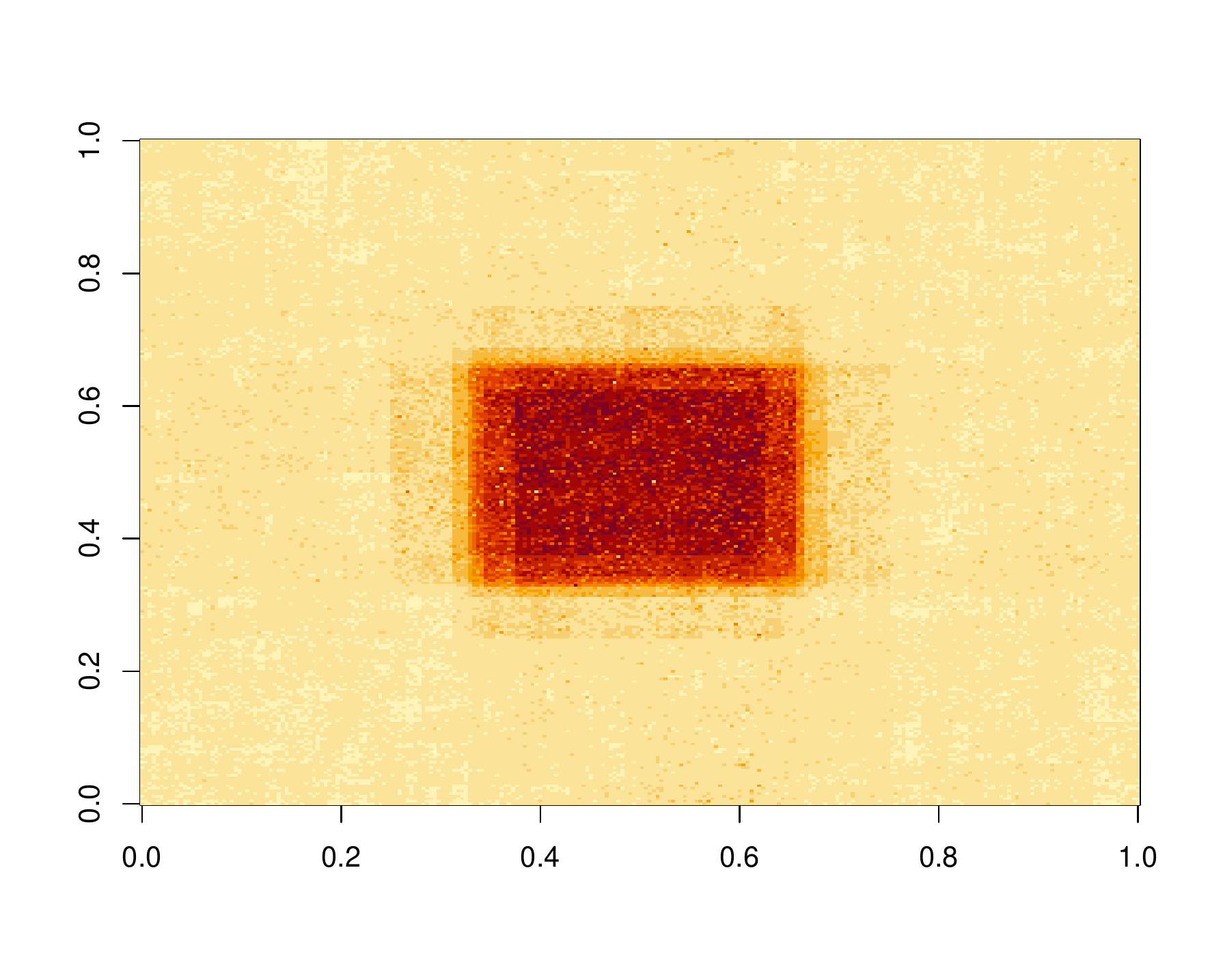}
	\caption{The first diagram refers to the true signal, the second one to the noisy signal and the third one to the estimated signal by the OMADRE estimator.}
	\label{fig:dc_box}
\end{figure}

\item Scenario 2 [Circular Signal]: The true signal $\theta^*$ is such that for every $(i_1, i_2) \in L_{2, n}$, we have
$$\theta^*_{(i_1, i_2)} = 
\begin{cases}
1 \quad &\text{if} \;\; \sqrt{(i_1 - n/2)^2 + (i_2 - n/2)^2} \leq n/4\\
0 \quad &\text{otherwise}
\end{cases}.$$
The corresponding plots are shown in Figure~\ref{fig:dc_circle} when $n = 256$.
\begin{figure}[H]
	\centering
	\includegraphics[scale = 0.25]{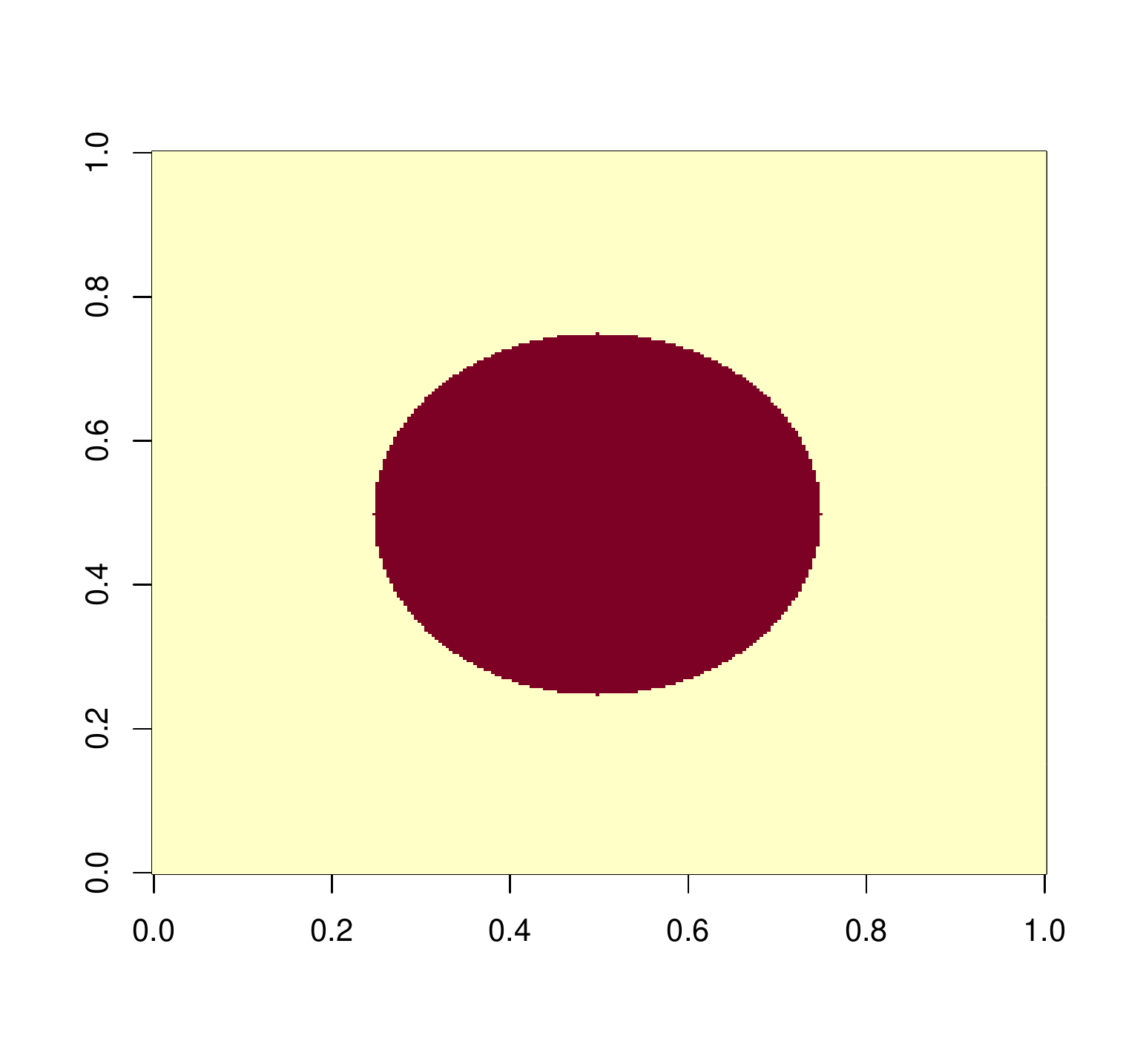} \includegraphics[scale = 0.25]{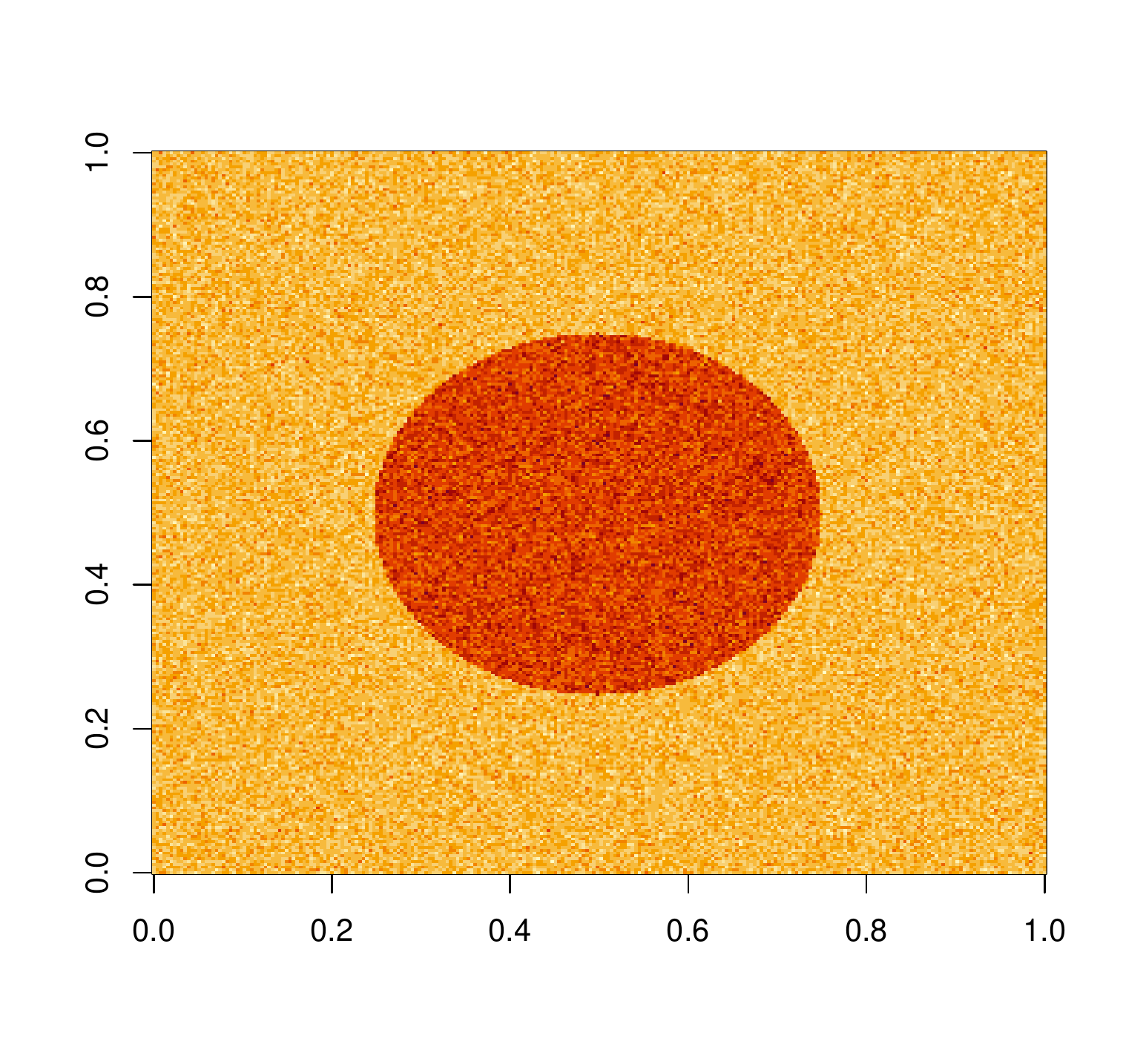} \includegraphics[scale = 0.25]{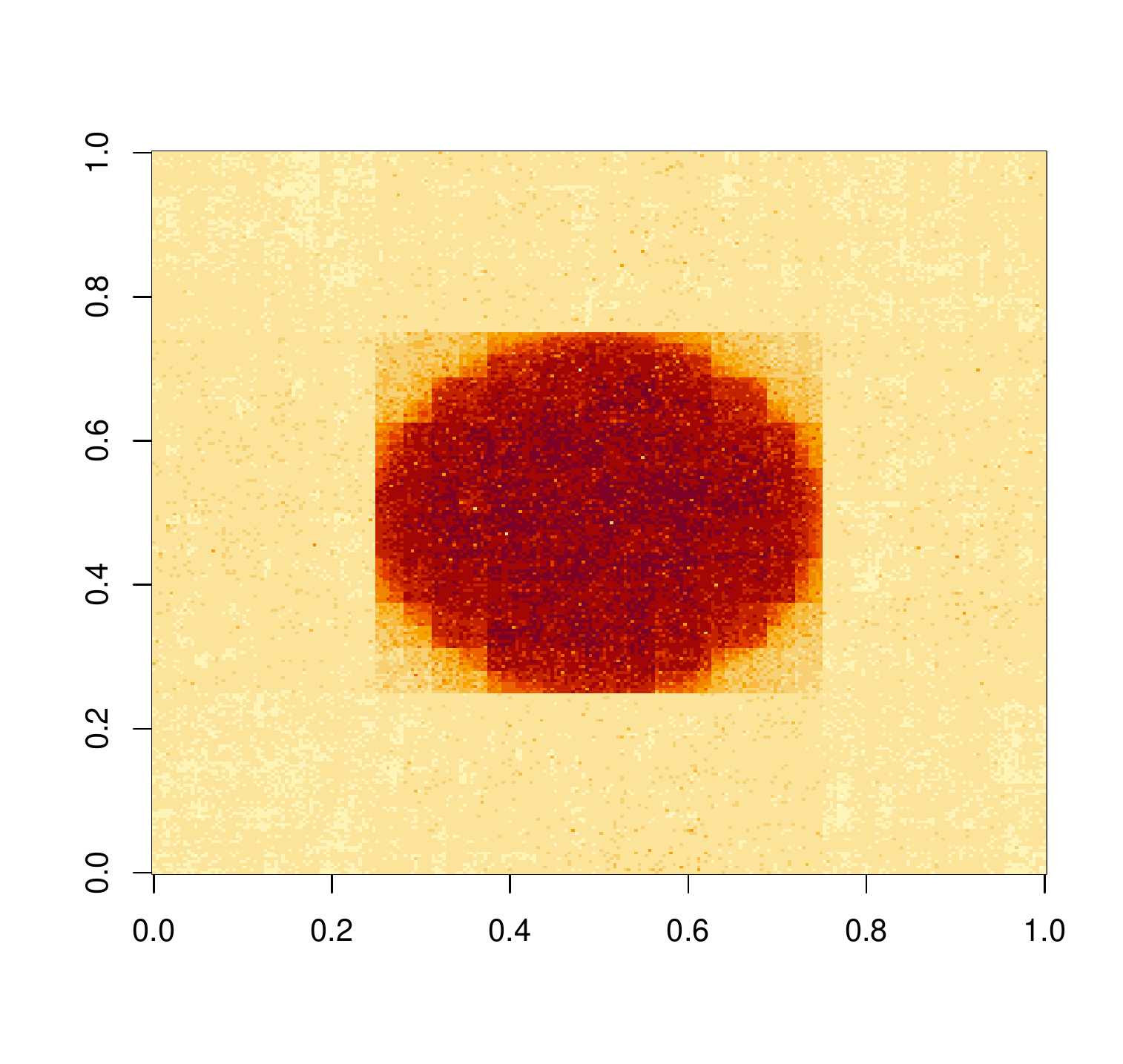}
	\caption{The first diagram refers to the true signal, the second one to the noisy signal and the third one to the estimated signal by the OMADRE estimator.}
	\label{fig:dc_circle}
\end{figure}

\item Scenario 3 [Sinusoidal Smooth Signal]: The true signal $\theta^*$ is such that for every $(i_1, i_2) \in L_{2, n}$, we have $\theta^*_{(i_1, i_2)} = f\left(i_1/n, i_2/n\right)$, where
$$f(x, y) = \sin(\pi x) \sin(\pi y).$$
The corresponding plots are shown in Figure~\ref{fig:dc_sinu} when $n = 256$.
\begin{figure}[H]
	\centering
	\includegraphics[scale = 0.25]{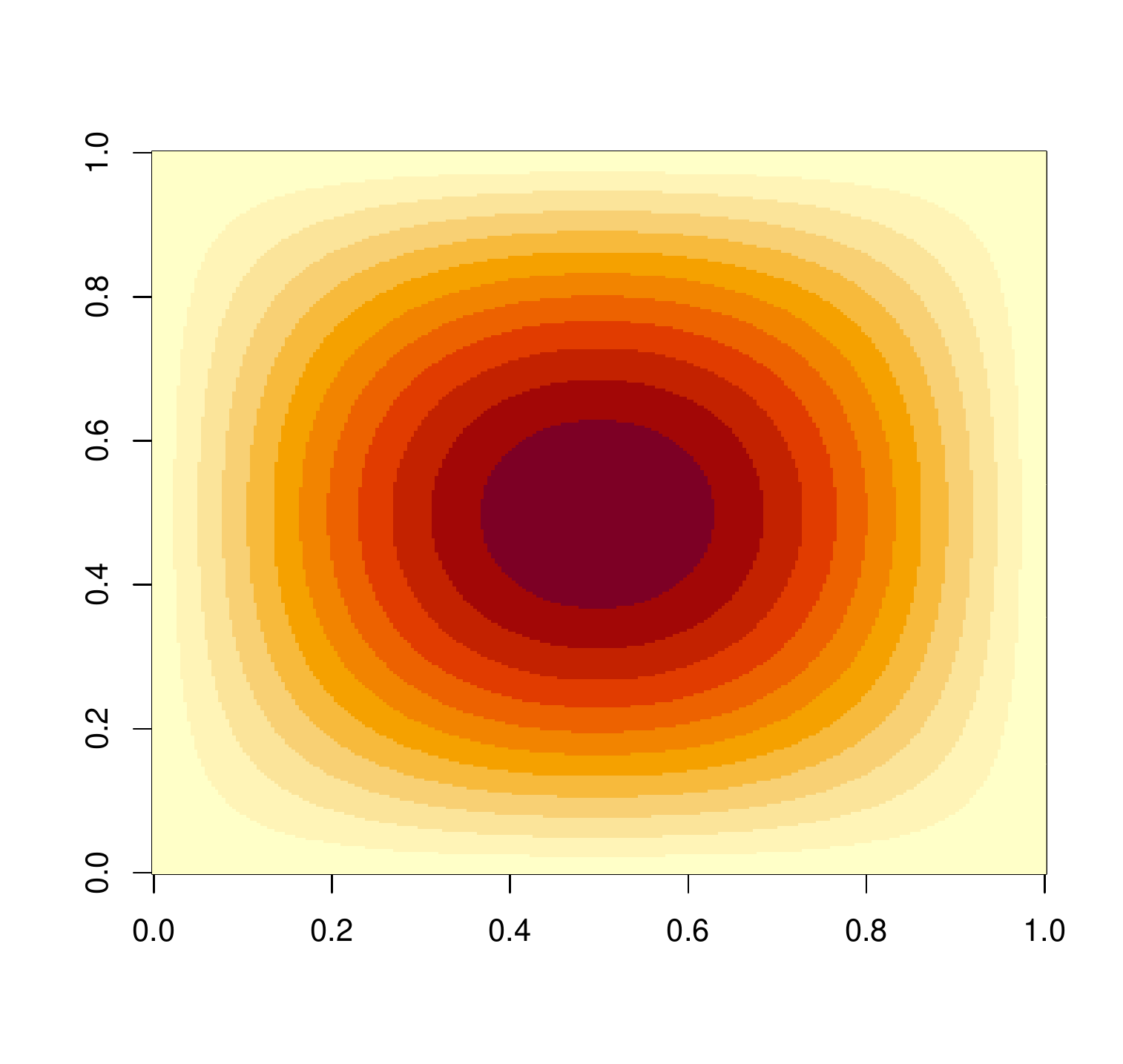} \includegraphics[scale = 0.25]{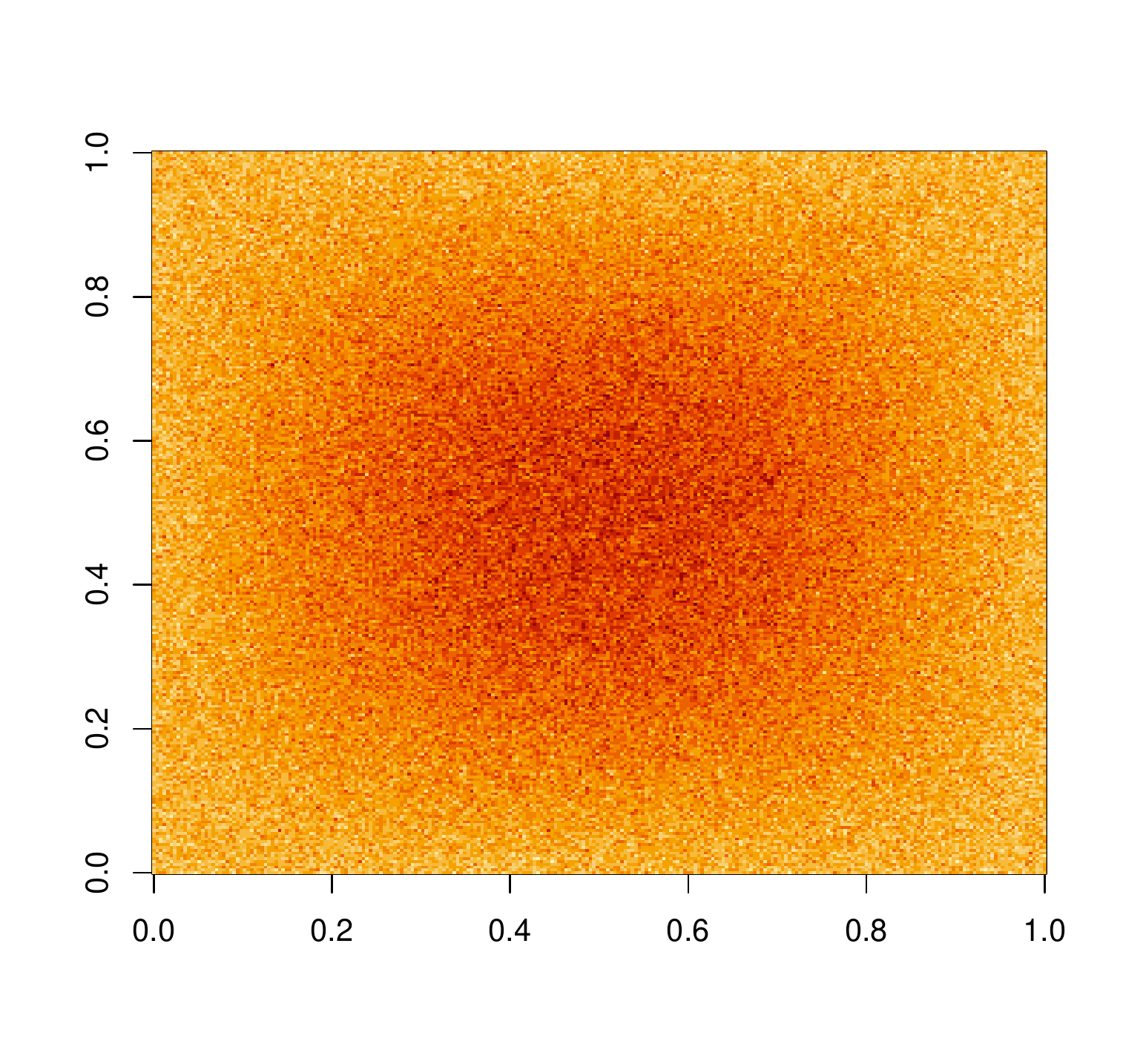} \includegraphics[scale = 0.25]{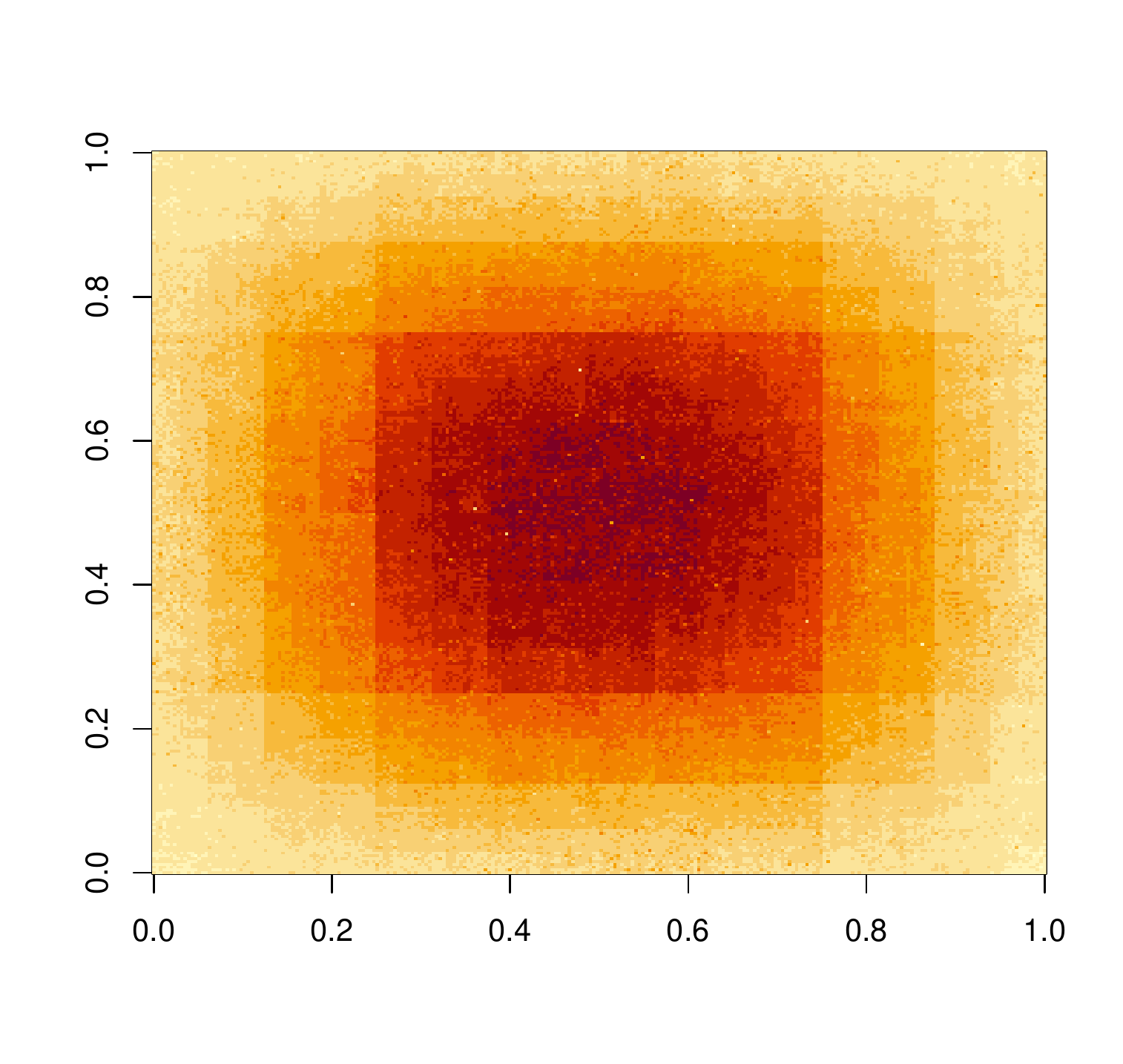}
	\caption{The first diagram refers to the true signal, the second one to the noisy signal and the third one to the estimated signal by the OMADRE estimator.}
	\label{fig:dc_sinu}
\end{figure}
\end{enumerate}

\begin{table}[H]
	\caption{MSEs of CV Dyadic CART estimator in different scenarios}
	\label{tab:omadre2d}
	\begin{tabular}{c|c|c|c}
		\hline
		$n \times n$ &Scenario 1  &Scenario 2 &Scenario 3\\
		\hline
		$64 \times 64$ & 0.035 & 0.037 & 0.014 \\
		$128 \times 128$ & 0.022 & 0.022 & 0.008 \\
		$256 \times 256$ & 0.012 & 0.013 & 0.005 \\
		\hline
	\end{tabular}
\end{table}

\section{Appendix}
\subsection{Proofs of Lemma~\ref{lem:complexity_online_mean} and Lemma~\ref{lem:complexity_online_regress}} We only prove 
Lemma~\ref{lem:complexity_online_regress} since it contains the proof of 
Lemma~\ref{lem:complexity_online_mean}. In the remainder of this subsection the constant $C$ always 
stands for an absolute constant whose precise value may change from one occurrence to the next. For 
every $s \in L_{d, n}$, we let $\mathcal S(s)$ denote the subcollection of all dyadic rectangles $S \subset 
L_{d, n}$ containing $s$.

At the outset of every round $t = 1 , \ldots, N$, we maintain several objects for every $S \in \mathcal S$. These 
include the weight $w_{S, t}$, the $L \times L$ matrix $X_{S, t} \coloneqq I + \sum_{s \in \rho[1:(t-1)] \cap S} 
x_s x_s^T$ where $L = |\mathcal F|$ and the vector $z_{S, t} = \sum_{s \in \rho[1:(t-1)] \cap S}\, y_s x_s \in 
\R^L$. We also store the indicator $I_{S, t} \in \{0, 1\}$ whether $S$ has had any datapoint upto round $t-1$ 
which is required to determine the set of active experts $A_t$ (recall step~2 of $\mathcal A$). In the beginning, $w_{S, 1} = \frac 1{|\mathcal S|}$ (recall the initialization step of $\mathcal A$), 
$X_{S, 1} = I$, $z_{S, 1} = 0$ and $I_{S, 1} = 0$ for all $S \in \mathcal S$. We first analyze the number of 
elementary operations necessary for computing the estimate $\hat y_{\rho(t)}$ and updating the matrices 
$(X_{S, t}; S \in \mathcal S(\rho(t)))$ as well as the indicators $I_{S, t}$ after the adversary reveals $\rho(t)$.

To this end observe that, we can visit all the rectangles in $A_t \subset \mathcal S(\rho(t))$ by performing 
binary search on each coordinate of $\rho(t) \in L_{d, n}$  in the lexicographic order and checking for the 
value of $I_{S, t}$. This implies, firstly, that $|\mathcal S(\rho(t))| \le (\log_2 2n)^d$ and secondly, that the 
number of operations required to update the indicators $I_{S, t}$'s is bounded by $(\log_2 2n)^d$.  Now let 
us recall from \eqref{def:online_regression} that, 
\begin{equation*}
\hat y_{\rho(t)}^{(S)} = X_{S+1, t}^{-1}\, z_{S, t} \cdot x_{\rho(t)},\:\: \mbox{where }X_{S, t+1} = X_{S, t} + x_{\rho(t)} x_{\rho(t)}^T.
\end{equation*}
Computing $X_{S+1, t}$ and its inverse, and the subsequent multiplication with $z_{S, t}$ require at most $C 
SL^3$ and $CL^2$ many basic operations respectively. Evaluating the inner product with $x_s$ afterwards  
take at most $C L$ many basic steps. 
Thus, we incur $C L^3$ as the total cost for computing 
$w_{S, t}T_{\lambda}(\hat y_{\rho(t)}^{(S)})$ and updating $X_{S, t}$ for each $S \in \mathcal S(\rho(t))$. 
Calculating $\hat y_{\rho(t)}$ from the numbers $w_{S, t}T_{\lambda}(\hat y_{\rho(t)}^{(S)})$'s (see step~3 
of $\mathcal A$), where $S \in \mathcal S(\rho(t))$, requires $C |\mathcal S(\rho(t))|$ many additional steps. 
Therefore, the combined cost for computing $\hat y_{\rho(t)}$ and updating $X_{S, t}$'s for all $S \in 
\mathcal S(\rho(t))$ is bounded by $C L^3 |\mathcal S(\rho(t))| =  C L^3(\log_2 2n)^d$.

After the adversary reveals $y_{\rho(t)}$, we need to update the weights $w_{S, t}$, the vectors $z_{S, 
t}$ and the indicators $I_{S, t}$ for all $S \in \mathcal S(t)$ (see step~4 of $\mathcal A$). For this we first need to compute the numbers $w_{S, 
t}\e^{-\alpha \ell_{S, t}}$ for all $S \in A_t$ and this takes $C |\mathcal S(\rho(t))| = C (\log_2 
2n)^d$ many basic operations. It takes an additional $C (\log_2  2n)^d$ many basic operations in order to 
compute the sums $\sum_{S \in A_t} w_{S, t}$ and $\sum_{S \in \mathcal A_t} w_{S, t} 
\e^{-\alpha \ell_{S, t}}$. Using these numbers, we can now update the weights as
\begin{equation*}
w_{S, t + 1} = \frac{w_{S, t} \e^{-\alpha \ell_{S, t}}}{\sum_{S \in A_t} w_{S, t} \e^{-\alpha \ell_{S, t}}} \sum_{S \in A_t} w_{S, t} 
\end{equation*}
and this also involves $C (\log_2 n)^d$ many elementary operations. Updating the vector $z_{S, t}$ to 
$z_{S, t + 1} = z_{S, t} + y_{\rho(t)} x_{\rho(t)}$ takes at most $C L$ many basic steps for every $S$ and 
hence $C L (\log_2 n)^d$ many steps in total.

Putting everything together, we get that the computational complexity of OLRADRE  is bounded by $C L^3 N (\log _2 n)^d$.\qed

\subsection{Proof of Theorem~\ref{thm:pcconst}}
Recall the definition of $\overline{R}(\theta,P)$ for any partition $P$ of $K = L_{d,n}$ and a 
$\theta \in \Theta_{P}$ given right after~\eqref{eq:regret_bnd}. It turns out that for the 
online averaging rule, one can give a clean bound on $\overline{R}(\theta,P)$ which is 
stated next as a proposition.

\begin{proposition}\label{prop:ruleprop1}
	Let $y_t = \theta^*_t + \sigma \ep_t$ for $t \in K = L_{d, n}$ where $\sigma > 0$ and $\ep_t$'s are 
	independent, mean zero sub-Gaussian variables with unit dispersion factor. Then we have for 
	any partition $P \in \mathcal{P}_{\mathsf T}$, where $\mathsf T \subset K $, and any $\theta \in 
	\Theta_{P}$,
	\begin{equation}\label{eq:expec_online_mean_regret1}
	\overline {\mathcal R}(\theta,P) \leq
	C |P| \, (\|\theta_{\mathsf T}^*\|_{\infty}^2 + \sigma^2 \log \e N)  \log \e N.
	\end{equation}
	where $C > 1$ is some absolute constant.
\end{proposition}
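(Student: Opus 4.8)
The plan is to exploit the product structure of the regret over the blocks of $P$ and then to apply, on each block separately, the deterministic regret bound for online mean prediction (this is what Lemma~\ref{lem:online_mean_regret1} provides), finishing with a sub-Gaussian maximal inequality.

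First I would reduce to a single block. Fix $P \in \mathcal{P}_{\mathsf T}$ and $\theta \in \Theta_P$. For a fixed ordering $\rho$, the summand indexed by $S$ in \eqref{eq:regret_bnd} depends on $\rho$ only through the order in which the elements of $S$ are revealed, so pulling the supremum inside the sum is legitimate and gives
\begin{equation*}
\mathcal{R}(y,\theta,P) \;\le\; \sum_{S \in P}\, \sup_{\tau}\Big(\sum_{j=1}^{|S|}\big(y_{\tau(j)} - \hat y^{(S)}_{\tau(j)}\big)^2 - \|y_S - \theta_S\|^2\Big),
\end{equation*}
where $\tau$ ranges over orderings of $S$. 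Since $S \in P$ and $F_S = {\rm span}(\{1\})$, the block $\theta_S$ is a constant vector, hence $\|y_S - \theta_S\|^2 \ge \inf_{a\in\R}\sum_{s\in S}(y_s - a)^2$.

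Next I would bound the per-block regret of the running mean. For a fixed ordering $\tau$ of $S$, write $\mu_j$ for the mean of $y_{\tau(1)},\dots,y_{\tau(j)}$ and $\mu_0 = 0$; then $\hat y^{(S)}_{\tau(j)} = \mu_{j-1}$ by \eqref{def:online_average}. The ``be the leader'' inequality for the square loss gives $(y_{\tau(j)} - \mu_{j-1})^2 - (y_{\tau(j)} - \mu_j)^2 \le 2(y_{\tau(j)} - \mu_{j-1})^2/j$, and telescoping the subtracted terms down to the best constant together with the crude bound $|y_{\tau(j)} - \mu_{j-1}| \le 2\max_{s\in S}|y_s|$ (and $|y_{\tau(1)} - \mu_0| = |y_{\tau(1)}|$ for the first round) shows that the cumulative running-mean loss on $S$ exceeds $\inf_{a\in\R}\sum_{s\in S}(y_s-a)^2$ by at most $C(\max_{s\in S} y_s^2)\log(\e|S|)$, uniformly in $\tau$ (the logarithm coming from $\sum_{j\le|S|}1/j$). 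Summing over $S \in P$ and using $\max_{s\in S}y_s^2 \le \max_{s\in\mathsf T}y_s^2$ gives $\mathcal{R}(y,\theta,P) \le C\,|P|\,(\max_{s\in\mathsf T} y_s^2)\log\e N$.

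Finally I would take expectations: writing $y_s = \theta^*_s + \sigma\ep_s$ and using $(a+b)^2 \le 2a^2 + 2b^2$ gives $\max_{s\in\mathsf T} y_s^2 \le 2\|\theta^*_{\mathsf T}\|_\infty^2 + 2\sigma^2\max_{s\in\mathsf T}\ep_s^2$, and the sub-Gaussian maximal inequality gives $\E\max_{s\in\mathsf T}\ep_s^2 \le C\log(\e|\mathsf T|) \le C\log\e N$; plugging this in produces \eqref{eq:expec_online_mean_regret1}. The only step that genuinely needs care is the reduction in the first paragraph --- checking that the supremum over global orderings splits across blocks (it does, since interleaving points outside $S$ leaves the $S$-summand unchanged) and that the per-block bound is uniform over orderings and absorbs the $U=\emptyset$ prediction $\overline y_\emptyset = 0$; everything after that is a one-line maximal inequality.
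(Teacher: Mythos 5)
Your proposal is correct and follows essentially the same route as the paper: decompose the regret over the blocks of $P$, apply the deterministic running-mean regret bound (Lemma~\ref{lem:online_mean_regret1}) on each block uniformly over orderings, and finish with a sub-Gaussian maximal inequality for $\E\max_{s}y_s^2$. The only difference is that you re-derive the per-block lemma via the follow-the-leader telescoping argument rather than citing it, which the paper delegates to \cite{orabona2019modern}.
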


\begin{proof}
	We have 
	\begin{align*}
		\overline {\mathcal R}(\theta,P) = \E \:\mathcal{R}(y, \theta, P) \leq |P|\:\: \E \sup_{\rho, S \in P} \Big(\sum_{t: \rho(t) \in S}(y_{\rho(t)} - \hat{y}_{\rho(t)}^{(S)})^2  - \|y_S - \theta_{S} \|^2\Big).
	\end{align*}
	
	Now, the following deterministic lemma is going to be of use to us. 
	\begin{lemma}
		\label{lem:online_mean_regret1}
		Let $z_1, \ldots, z_T$ be an arbitrary sequence of numbers and $\hat z_t \coloneqq 
		\frac{1}{t-1}\sum_{s = 1}^{t-1} z_s$ for $t = 2, \ldots, T$ where $\hat z_1 = 0$. Then, we have
		\begin{equation}\label{eq:online_mean_regret1}
		\|z - \hat z\|^2 - \|z - \bar z\|^2  \le 4 \|z\|_{\infty}^2 \log \e T.
		\end{equation}
	\end{lemma}

For a proof of the above lemma, see, e.g., Theorem~1.2 in \cite{orabona2019modern}. Using the above 
deterministic lemma and the previous display, we can write for any partition $P \in \mathcal{P}_{\mathsf T}$ 
and $\theta \in \Theta_{P}$,
	\begin{align}\label{eq:online_mean_regret_bnd}
		\overline {\mathcal R}(\theta,P) & \leq 4 |P|\, \,  \E \, \sup_{\rho,S \in P} \Big(\sum_{t: \rho(t) \in S}(y_{\rho(t)} - \hat{y}_{\rho(t)}^{(S)})^2  - \|y_S - \overline{y_{S}} \|^2\Big) \nonumber 
		\\ & \leq  4 |P| \:\:\E \|y\|_{\infty}^2 \:\log \e n \leq C |P|\, (\|\theta^*\|_{\infty}^2 + \sigma^2 \log \e N)  \log \e N.
	\end{align}
	where $\overline{y_{S}}$ denotes the mean of the entries of $y_{S}$ and we deduce the last inequality 
	from a standard upper bound on the tail of sub-Gaussian random variables. 
\end{proof}

The following corollary is a direct implication of Theorem~\ref{thm:generic_online} and 
Proposition~\ref{prop:ruleprop1} applied to the particular setting described at the beginning of 
Section~\ref{sec:omadre}.
\begin{corollary}\label{cor:meanresult}
	Let $\mathsf T$ be any subset of $K.$ Let $\hat{\theta}^{OM}$ denote the OMADRE predictor. 
	There exists an absolute constant $C > 1$ such that for $\lambda \geq C (\sigma \sqrt{\log N} 
	\vee\|\theta^*\|_{\infty})$, one has for any non-anticipating ordering $\rho$ of $K$,
	\begin{align}\label{eq:generic_online1}
		\E \|\hat \theta^{OM}_{\mathsf T} - \theta_{\mathsf T}^* \|^2  \le 
		\inf_{\substack{P \in {\mathcal 
					P}_{\dpt,T}\\ \theta \in \Theta_P \subset \R^{T}}} &\big( \|\theta_{\mathsf T}^* - \theta \|^2 + C \lambda^2 |P| \log 2^d N\big) + \frac{\sigma^2 + \lambda^2}{|\mathsf T|}.
	\end{align}
\end{corollary}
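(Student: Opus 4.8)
The plan is to specialize Theorem~\ref{thm:generic_online} to the OMADRE instantiation and then plug in the regret bound of Proposition~\ref{prop:ruleprop1}; the argument is essentially bookkeeping, so I will mainly indicate which substitutions to make.

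First I would fix the ingredients as at the start of Section~\ref{sec:omadre}: $K = L_{d,n}$, the expert family $\mathcal S$ is the set of all dyadic subrectangles, $F_S = {\rm span}(\{1\})$ for every $S \in \mathcal S$, and $\mathbf r$ is the online averaging rule \eqref{def:online_average}. For these choices the partition class $\mathcal P_{\mathsf T}$ appearing in Theorem~\ref{thm:generic_online} is exactly $\mathcal P_{\dpt, \mathsf T}$ (which is nonempty for every $\mathsf T$, since singletons are dyadic rectangles), and by \eqref{eq:no_of_expert} we have $|\mathcal S| = 2^d N$, so $\log \e |\mathcal S| \le C \log 2^d N$ for an absolute constant $C$. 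Since $\lambda \ge C(\sigma\sqrt{\log N} \vee \|\theta^*\|_\infty) \ge C(\sigma\sqrt{\log |\mathsf T|} \vee \|\theta^*\|_\infty)$ as $|\mathsf T| \le N$, the hypothesis of the second half of Theorem~\ref{thm:generic_online} holds; multiplying \eqref{eq:generic_online2} through by $|\mathsf T|$ gives
$$\E \|\hat\theta^{OM}_{\mathsf T} - \theta^*_{\mathsf T}\|^2 \le \inf_{\substack{P \in \mathcal P_{\dpt,\mathsf T}\\ \theta \in \Theta_P}}\Big(\|\theta^*_{\mathsf T} - \theta\|^2 + C\lambda^2 |P|\log \e |\mathcal S| + \overline{\mathcal R}(\theta, P)\Big) + \frac{\sigma^2 + \lambda^2}{|\mathsf T|}.$$

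Next I would bound the regret term uniformly over the pairs $(P,\theta)$ in the infimum. Proposition~\ref{prop:ruleprop1} gives $\overline{\mathcal R}(\theta, P) \le C|P|(\|\theta^*_{\mathsf T}\|_\infty^2 + \sigma^2 \log \e N)\log \e N$; since $\|\theta^*_{\mathsf T}\|_\infty \le \|\theta^*\|_\infty \le \lambda$ and, after possibly enlarging the absolute constant in the lower bound on $\lambda$, $\sigma^2 \log \e N \le \lambda^2$, this is at most $C\lambda^2 |P|\log \e N \le C\lambda^2 |P| \log 2^d N$. Substituting this together with $\log \e |\mathcal S| \le C\log 2^d N$ into the display above and folding all constants into one absolute $C$ yields exactly \eqref{eq:generic_online1}.

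I do not anticipate a real obstacle; the delicate points are only: (i) checking that for the dyadic-rectangle experts $\mathcal P_{\mathsf T} = \mathcal P_{\dpt, \mathsf T}$; (ii) noting that it is precisely the lower bound $\lambda \ge C\sigma\sqrt{\log N}$ (rather than $\sigma\sqrt{\log |\mathsf T|}$) that lets the additive $\sigma^2\log\e N$ inside Proposition~\ref{prop:ruleprop1} be absorbed into $\lambda^2$, so that the regret contribution merges with the $\lambda^2|P|\log 2^dN$ term instead of surviving as a separate summand; and (iii) tracking the harmless change $1/|\mathsf T|^2 \to 1/|\mathsf T|$ in the lower-order term when \eqref{eq:generic_online2} is multiplied by $|\mathsf T|$. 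For Theorem~\ref{thm:pcconst} itself one additionally replaces the infimum over $\mathcal P_{\dpt,\mathsf T}$ by one over all rectangular partitions $\mathcal P_{\all,\mathsf T}$, at the price of an extra $(\log \e n)^d$ factor multiplying $|P|$, since each rectangular partition refines into a dyadic one with at most a $(\log\e n)^d$-fold increase in the number of pieces.
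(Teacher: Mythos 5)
Your proposal is correct and follows exactly the paper's route: the paper proves this corollary in one line by combining \eqref{eq:generic_online2} with Proposition~\ref{prop:ruleprop1}, using $|\mathcal S| = 2^d N$ and the lower bound on $\lambda$ to absorb the regret term $\overline{\mathcal R}(\theta,P) \le C|P|(\|\theta^*\|_\infty^2 + \sigma^2\log \e N)\log\e N$ into $C\lambda^2|P|\log 2^d N$. Your bookkeeping of the substitutions (including the harmless $1/|\mathsf T|^2 \to 1/|\mathsf T|$ change upon multiplying by $|\mathsf T|$) matches the intended argument.
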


We are now ready to prove Theorem~\ref{thm:pcconst}.

\begin{proof}
	The proof directly follows from~\eqref{eq:generic_online2} and Proposition~\ref{prop:ruleprop1}.
\end{proof}

\begin{proof}[Proof of Theorem~\ref{thm:pcconst}]
	Fix any partition $P \in \mathcal{P}_{\all, \mathsf T}$ and any $\theta \in \Theta_{\mathsf P}.$ Consider a dyadic refinement of $P$ which we denote by $\tilde{P}$. By definition, 
	$\tilde{P} \in \mathcal{P}_{\dpt, \mathsf T}$ and  $\theta \in \Theta_{\tilde{P}}.$ Therefore, we can use the bound 
	in~\eqref{eq:generic_online1} given in Corollary~\ref{cor:meanresult}. The proof is then finished by noting 
	that $|\tilde{P}| \leq |P| (\log \e n)^d.$
\end{proof}

\subsection{Proof of Theorem~\ref{thm:TV_slow_rate}}
It has been shown in~\cite{chatterjee2019adaptive} that the class of functions $\mathcal{BV}_{d, n}(V^*)$ is well-approximable by 
piecewise constant functions with dyadic rectangular level sets which makes it natural to study the OMADRE estimator for this function class. 


The following result was proved in \cite{chatterjee2019adaptive} (see Proposition~8.5 in the arxiv version).
\begin{proposition}\label{prop:division}
	Let $\theta \in \R^{L_{d,n}}$ and $\delta > 0.$ Then there exists a dyadic partition $P_{\theta,\delta} = (R_1,\dots,R_k)$ in $\mathcal P_{\rdp}$ such that
	\newline
	a) $k = |P_{\theta,\delta}| \leq 1 +  \log_2 N \:\:\big(1 + \frac{\TV(\theta)}{\delta}\big)$, and for all $i \in [k]$,\newline
	c) $\TV(\theta_{R_i}) \leq \delta$, and \newline
	d) $\mathcal{A}(R_i) \leq 2$ \newline
	where $\mathcal{A}(R)$ denotes the aspect ratio of a generic rectangle $R.$ 
\end{proposition}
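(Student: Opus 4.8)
The plan is to build $P_{\theta,\delta}$ by a greedy top-down recursive dyadic refinement of $L_{d,n}$, always bisecting a \emph{longest} side of the current rectangle so that the aspect ratio stays controlled automatically. Concretely: maintain a collection of active dyadic rectangles, initialized to the single cube $L_{d,n}$; repeatedly pick an active rectangle $R$, declare it a leaf if $\TV(\theta_R)\le\delta$ (or $R$ is a singleton, in which case $\TV(\theta_R)=0$), and otherwise replace it by the two congruent dyadic sub-rectangles obtained by halving one side of maximal length, which become active in its place. Each split strictly decreases $\log_2|R|$ and a singleton cannot be split, so the process terminates; the resulting leaves form a recursive dyadic partition $P_{\theta,\delta}\in\mathcal P_{\rdp}$, and property (c) holds by the stopping rule.

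For property (d) I would argue by induction that every rectangle produced has aspect ratio $\le 2$. A dyadic rectangle with $\mathcal A\le 2$ has all its side lengths in $\{2^a,2^{a+1}\}$ for some $a$, since side lengths are powers of $2$ and the largest is at most twice the smallest. Halving a side of length equal to the current maximum $M$ leaves all side lengths in $\{2^a,2^{a+1}\}$ when $M=2^{a+1}$, and in $\{2^{a-1},2^a\}$ when $M=2^a$ (the cube case); in either case $\mathcal A\le 2$ is preserved. Since $L_{d,n}$ is a cube, all rectangles in the recursion --- in particular all leaves --- satisfy $\mathcal A\le 2$.

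For the count (a), I would view the recursion as a full binary tree whose internal nodes are the rectangles that got split and whose leaves are the members of $P_{\theta,\delta}$, so that $k$ equals one plus the number of internal nodes. The key structural fact is \emph{super-additivity of the restricted total variation}: if $R=R'\sqcup R''$ is a split, the grid edges internal to $R'$ and to $R''$ are disjoint subsets of the edges internal to $R$ (only the edges crossing the cut are lost), hence $\TV(\theta_{R'})+\TV(\theta_{R''})\le\TV(\theta_R)$. Iterating down to any fixed depth $j$, the depth-$j$ rectangles are pairwise disjoint and $\sum_{R\text{ at depth }j}\TV(\theta_R)\le\TV(\theta)$. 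Since every internal node $R$ was split precisely because $\TV(\theta_R)>\delta$, there are fewer than $\TV(\theta)/\delta$ internal nodes at each depth. Finally, each split decreases $\log_2|R|$ by exactly one starting from $\log_2 N$, and a rectangle at depth $\log_2 N$ is a singleton and thus never internal; so internal nodes occur only at depths $0,\dots,\log_2 N-1$, giving fewer than $\log_2 N\cdot\TV(\theta)/\delta\le\log_2 N\,(1+\TV(\theta)/\delta)$ internal nodes, i.e.\ $k\le 1+\log_2 N\,(1+\TV(\theta)/\delta)$.

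The only genuinely delicate point is the interplay between the aspect-ratio requirement and the recursive-dyadic structure: one must verify that the ``bisect a longest side'' rule both stays inside $\mathcal P_{\rdp}$ and never lets $\mathcal A$ exceed $2$, and the cube shape of $L_{d,n}$ (and, more generally, the dyadic-square hypothesis wherever this proposition is applied) is exactly what makes the base case of that induction go through. Once that is settled, the count is a routine amortization of the number of splits against the total-variation budget $\delta$ via the super-additivity above, and in fact yields the slightly sharper $k\le 1+\log_2 N\cdot\TV(\theta)/\delta$.
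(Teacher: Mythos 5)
Your proof is correct and is essentially the argument behind this result: the paper itself does not prove the proposition but cites it from Chatterjee and Goswami (2021, Proposition~8.5 of the arXiv version), where the construction is the same greedy ``bisect the longest side until $\TV(\theta_R)\le\delta$'' scheme, with the aspect ratio controlled by the side-lengths-in-$\{2^a,2^{a+1}\}$ invariant and the cardinality controlled by superadditivity of $\TV$ across each depth of the splitting tree. All the delicate points (termination at singletons, membership in the recursive dyadic class, the full-binary-tree leaf count, and the at-most-$\log_2 N$ depths of internal nodes) are handled correctly, and your slightly sharper bound $k\le 1+\log_2 N\cdot \TV(\theta)/\delta$ is indeed valid.
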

Let $\Pi_{\Theta_{P_{\theta, \delta}}} \coloneqq \Pi_{P_{\theta, \delta}}$ denote the orthogonal projector onto the subspace $\Theta_{P_{\theta, \delta}}$ of 
$\R^{L_{d, n}}$ comprising functions that are constant on each $R_i \in P_{\theta, \delta}$. 
It is clear that $\Pi_{P_{\theta, \delta}} \theta(a) = \overline \theta_{R_i}$ --- 
the average value of $\theta$ over $R_i$ --- for all $a \in R_i$ and $i \in [k]$. We will use 
$\Pi_{P_{\theta^*, \delta}} \theta^*$ as $\theta$ in our application of \eqref{eq:generic_online1} in this case. 
In order to estimate $\|\theta^* - \Pi_{\Theta_{P_{\theta^*, \delta}}}\theta^*\|$, we would need the following 
approximation theoretic result.
\begin{proposition}
	\label{prop:gagliardo}
	Let $\theta \in \R^{\otimes_{i \in [d]}[n_i]}$ and 
	$$\overline \theta \coloneqq \sum_{(j_1, j_2, \ldots, j_d) \,\in\, \otimes_{i\in [d]}[n_i]}\theta[j_1, j_2, \ldots, j_d] / \prod_{i \in [d]}n_i$$ be the average of 
	the elements of $\theta$. Then for every $d > 1$ we have,
	\begin{equation}\label{eq:var_bnd_d>1}
	\sum_{(j_1, j_2, \ldots, j_d) \,\in\, \otimes_{i\in [d]}[n_i]}|\theta[j_1, j_2, \ldots, j_d] - \overline \theta|^{2} \leq \Big(1 + \:\max_{i, j \in [d]}\frac{n_i}{n_j}\Big)^{2}\TV(\theta)^{2}\,.
	\end{equation}
	For $d = 1$, on the other hand, we have
	\begin{equation}\label{eq:var_bnd_d=1}
	\sum_{j \in [N]}|\theta[j] - \overline \theta|^{2} \leq N\,\TV(\theta)^{2}\,.
	\end{equation}
\end{proposition}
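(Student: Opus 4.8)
The plan is to handle the three regimes $d=1$, $d=2$ and $d\ge 3$ separately, the last one by induction on $d$ with the $d=2$ estimate as the base case.

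The case $d=1$ is elementary. Since $\overline\theta$ is a convex combination of the entries of $\theta$, for every $j\in[N]$ one has $|\theta[j]-\overline\theta|\le\max_k|\theta[j]-\theta[k]|\le\TV(\theta)$, the last step by telescoping along the path from $k$ to $j$; squaring and summing over the $N$ coordinates gives \eqref{eq:var_bnd_d=1}.

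For $d=2$ I would run a discrete Gagliardo--Nirenberg ("multiplication") argument. Write $R_1,R_2$ for the total variation of $\theta$ in the two coordinate directions, so $\TV(\theta)=R_1+R_2$, and let $g_1(a_2)$ (resp.\ $g_2(a_1)$) be the variation of $\theta$ along the $a_2$-th row (resp.\ the $a_1$-th column). Routing a path from a cell $a=(a_1,a_2)$ to another cell $b=(b_1,b_2)$ through $(b_1,a_2)$ and bounding each leg by the variation along the line it lies on gives $|\theta[a]-\theta[b]|\le g_1(a_2)+g_2(b_1)$, hence
\[ |\theta[a]-\overline\theta|\le\frac1N\sum_b|\theta[a]-\theta[b]|\le g_1(a_2)+\frac{R_2}{n_1}, \]
and routing instead through $(a_1,b_2)$ gives $|\theta[a]-\overline\theta|\le g_2(a_1)+\frac{R_1}{n_2}$. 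Multiplying the two bounds and summing over $a$ --- the first factor depends only on $a_2$ and the second only on $a_1$, so the double sum factorizes --- yields
\[ \sum_a(\theta[a]-\overline\theta)^2\le\Big(R_1+\frac{n_2}{n_1}R_2\Big)\Big(R_2+\frac{n_1}{n_2}R_1\Big)\le\max\Big(\frac{n_1}{n_2},\frac{n_2}{n_1}\Big)(R_1+R_2)^2, \]
which already gives \eqref{eq:var_bnd_d>1} for $d=2$ (in fact with a slightly better constant).

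For $d\ge 3$ I would induct on $d$. Splitting the sum over the first coordinate and applying, on each slice $\{x_1=a_1\}$, the orthogonal decomposition into the slice mean $\overline\theta_{(a_1)}$ plus the fluctuation, one gets
\[ \sum_a(\theta[a]-\overline\theta)^2=\sum_{a_1}\Big[\sum_{b:\,b_1=a_1}\big(\theta[b]-\overline\theta_{(a_1)}\big)^2+\Big(\prod_{j\ge2}n_j\Big)\big(\overline\theta_{(a_1)}-\overline\theta\big)^2\Big]. \]
Each slice is a $(d-1)$-dimensional array with aspect ratio at most $\rho:=\max_{i,j}n_i/n_j$, so by the inductive hypothesis the first inner sum is at most $(1+\rho)^2\sum_{a_1}D(a_1)^2$, where $D(a_1)$ is the total variation of the slice; since the edges in directions $2,\dots,d$ are partitioned among the slices, $\sum_{a_1}D(a_1)=\sum_{i\ge2}R_i$, and therefore $\sum_{a_1}D(a_1)^2\le(\sum_{i\ge2}R_i)^2$. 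The second inner sum concerns the one-dimensional array $a_1\mapsto\overline\theta_{(a_1)}$, whose total variation is at most $R_1/\prod_{j\ge2}n_j$, so by the $d=1$ bound it is at most $n_1R_1^2/(\prod_{j\ge2}n_j)^2$; multiplying by $\prod_{j\ge2}n_j$ and using $n_1/\prod_{j\ge2}n_j\le n_1/n_2\le\rho$ bounds that term by $\rho R_1^2$. Adding the two contributions and using $(\sum_{i\ge2}R_i)^2+R_1^2\le\TV(\theta)^2$ together with $\rho\le(1+\rho)^2$ closes the induction.

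The main obstacle is the $d=2$ step: a naive telescoping bound, or naively iterating the one-dimensional estimate, produces a constant of order $n$, and it is precisely the multiplication trick --- pairing two representations that depend on disjoint coordinates, so that the sum factorizes --- that keeps the constant independent of $N$. For the same reason the induction must bottom out at $d=2$ rather than $d=1$; the rest is the routine bookkeeping of directional total variations and the elementary inequality $n_1/\prod_{j\ge2}n_j\le\rho$ that prevents the constant from degrading at each inductive step.
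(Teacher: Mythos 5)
Your proof is correct. Note that the paper itself does not prove this proposition --- it defers \eqref{eq:var_bnd_d>1} to Proposition~8.7 of \cite{chatterjee2019adaptive} and \eqref{eq:var_bnd_d=1} to Lemma~10.3 of \cite{chatterjee2019new} --- so what you have written is a self-contained substitute for the cited arguments. All the steps check out: the $d=1$ telescoping bound; the $d=2$ factorization, where averaging the two path bounds over $b$ gives $|\theta[a]-\overline\theta|\le g_1(a_2)+R_2/n_1$ and $|\theta[a]-\overline\theta|\le g_2(a_1)+R_1/n_2$, and multiplying yields $\bigl(R_1+\tfrac{n_2}{n_1}R_2\bigr)\bigl(R_2+\tfrac{n_1}{n_2}R_1\bigr)=2R_1R_2+\tfrac{n_1}{n_2}R_1^2+\tfrac{n_2}{n_1}R_2^2\le\rho(R_1+R_2)^2\le(1+\rho)^2\TV(\theta)^2$; and the inductive step, where the ANOVA decomposition, the partition of the direction-$i$ edges ($i\ge2$) among the slices, the bound $R_1/\prod_{j\ge2}n_j$ on the variation of the slice-mean sequence, and the inequality $n_1/\prod_{j\ge2}n_j\le\rho$ combine exactly as you say. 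You correctly identify the two points where a naive argument degrades the constant (iterating the one-dimensional bound, and bottoming the induction out at $d=1$), and the multiplication trick you use for $d=2$ is the standard discrete Gagliardo--Nirenberg device that the cited results also rely on. The only cosmetic remark is that your $d=2$ bound $\rho\,\TV(\theta)^2$ and inductive conclusion are slightly sharper than the stated $(1+\rho)^2\TV(\theta)^2$, which of course only helps.
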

See \cite{chatterjee2019adaptive} (Proposition $8.7$ in the arxiv version) for a proof of \eqref{eq:var_bnd_d>1} and~\cite{chatterjee2019new} (Lemma $10.3$ in the arxiv version) for 
\eqref{eq:var_bnd_d=1}. Propositions~\ref{prop:division} and \ref{prop:gagliardo} together 
with the description of $\Pi_{P_{\theta, \delta}}$ as the operator that projects $\theta$ 
onto its average value on each rectangle $R_i$, imply that
\begin{equation}\label{eq:piecewise_const_TVd>1}
\|\theta - \Pi_{P_{\theta, \delta}}\theta\|^2 \le C |P_{\theta, \delta}| \delta^2 = C \log_2 2N \, (\delta^2 + \delta \TV(\theta) )
\end{equation}
for $d > 1$ whereas for $d = 1$,
\begin{equation}\label{eq:piecewise_const_TVd=1}
\|\theta - \Pi_{P_{\theta, \delta}}\theta\|^2 \le N \delta^2. 
\end{equation}

\begin{proof}[Proof of Theorem~\ref{thm:TV_slow_rate}]
	We get by plugging the bounds from \eqref{eq:piecewise_const_TVd>1} and item~(a) in 
	Proposition~\ref{prop:division} --- both evaluated at $\theta = \Pi_{P_{\theta_{\mathsf 
	T}^*,\delta}} \theta_{\mathsf T}^*$ --- into Corollary~\ref{cor:meanresult} 
	\begin{align*}
		\E \|\hat \theta^{OM}_{\mathsf T} - \theta_{\mathsf T}^* \|^2 \le \, \inf_{\delta > 0} \, C&\big((\delta^2 + 
		\delta V^*_{\mathsf T}) \log 2^d N + \lambda^2 (\log 2^d N)^2 \big(1 + \frac{V_{\mathsf T}^*}{\delta}\big) \big) + \frac{\sigma^2 + \lambda^2}{|\mathsf T|}.
	\end{align*}
	Now putting $\delta = \lambda (\log 2^d N)^{1/2}$ in the above display, we obtain \eqref{eq:TV_slow_rate_d>1}. 
	
	For $d = 1$, we follow the exact same steps except that we now use the bound \eqref{eq:piecewise_const_TVd=1} in lieu of \eqref{eq:piecewise_const_TVd>1} to deduce
	\begin{align*}
		\E \|\hat \theta^{OM}_{\mathsf T} - \theta_{\mathsf T}^* \|^2 \le \, \inf_{\delta > 0} \, C&\big( |\mathsf T| \delta^2 + \lambda^2 (\log 2^d N)^2 \big(1 + \frac{V_{\mathsf T}^*}{\delta}\big) \big) + \frac{\sigma^2 + \lambda^2}{|\mathsf T|}.
	\end{align*}
	This immediately leads to \eqref{eq:TV_slow_rate_d=1} upon setting $\delta = (V_{\mathsf T}^*)^{1/3} 
	\lambda^{2/3} (\log 2^d N)^{2/3} |\mathsf T|^{-1/3}$.
\end{proof}

\subsection{Proof of Theorem~\ref{thm:pcpoly}}
We take a similar approach as in the proof of Theorem~\ref{thm:pcconst}. Let us begin with an upper bound 
on the regret of the estimator (see, e.g., \cite[pp.~38--40]{rakhlin2012statistical} for a proof). \begin{proposition}[Regret bound for 
	Vovk-Azoury-Warmuth forecaster]\label{prop:online_regress_regret}
	Let $(z_1, x_1), \ldots,$ $(z_T, x_T) \in \R \times \R^d$ and define for $t = 1, \ldots, T$ (cf.~\eqref{def:online_regression}),
	\begin{equation*}
		\hat z_t \coloneqq \hat \beta_s \cdot x_s \mbox{ where } \hat \beta_s = \Big(I + \sum_{s = 1}^{t-1} x_s x_s^T \Big)^{-1} \big(\sum_{s = 1}^{t-1} y_s x_s \big).
	\end{equation*} Then, we have
	\begin{equation}\label{eq:online_regr_regret}
		\sum_{t \in [T]} (\hat z_t  - z_t)^2 - \inf_{\beta \in \R^d} \big\{\sum_{t \in [T]}(z_t - 
		\beta \cdot x_t)^2 + \|\beta\|^2  \big\} \le d \|z_\infty\|^2 \log (1 +  T \max_{t \in 
			[T]}\|x_t\|^2/d).
	\end{equation}
\end{proposition}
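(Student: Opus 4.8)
The plan is to run the classical potential-function analysis of the Vovk--Azoury--Warmuth (``forward'') forecaster. Write $A_t \coloneqq I + \sum_{s=1}^{t} x_s x_s^T$ and $b_t \coloneqq \sum_{s=1}^{t} z_s x_s$, with the conventions $A_0 = I$ and $b_0 = 0$, so that the rule in~\eqref{def:online_regression} reads $\hat z_t = x_t^T A_t^{-1} b_{t-1}$; note that the defining feature of the forecaster is that $A_t$, and not $A_{t-1}$, appears here. Introduce the ridge potential
\[
\Phi_t \coloneqq \min_{\beta \in \R^d} \big\{ \|\beta\|^2 + \sum_{s=1}^{t} (z_s - \beta\cdot x_s)^2 \big\}.
\]
Completing the square in $\beta$ gives the closed form $\Phi_t = \sum_{s=1}^{t} z_s^2 - b_t^T A_t^{-1} b_t$; in particular $\Phi_0 = 0$, while $\Phi_T$ equals the infimum over $\beta \in \R^d$ appearing on the left-hand side of~\eqref{eq:online_regr_regret}. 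Hence it suffices to bound $\sum_{t\in[T]} (\hat z_t - z_t)^2 - \Phi_T$.

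The heart of the argument is the per-round inequality
\[
(\hat z_t - z_t)^2 \le \Phi_t - \Phi_{t-1} + z_t^2\, x_t^T A_t^{-1} x_t .
\]
I would prove it by substituting $b_t = b_{t-1} + z_t x_t$ and $A_t = A_{t-1} + x_t x_t^T$ into $\Phi_t - \Phi_{t-1}$ and simplifying, which, after recognizing $\hat z_t = x_t^T A_t^{-1} b_{t-1}$, yields the identity
\[
(\hat z_t - z_t)^2 = \Phi_t - \Phi_{t-1} + z_t^2\, x_t^T A_t^{-1} x_t + \big(b_{t-1}^T A_t^{-1} x_t\big)^2 + b_{t-1}^T\big(A_t^{-1} - A_{t-1}^{-1}\big) b_{t-1},
\]
so the claim reduces to showing the last two terms sum to a nonpositive number. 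This is the one genuinely computational step, and I expect it to be the main obstacle: by the Sherman--Morrison formula, with $\gamma \coloneqq x_t^T A_{t-1}^{-1} x_t \ge 0$ and $c \coloneqq x_t^T A_{t-1}^{-1} b_{t-1}$, one has $A_t^{-1} x_t = A_{t-1}^{-1} x_t / (1+\gamma)$, hence $b_{t-1}^T A_t^{-1} x_t = c/(1+\gamma)$ and $b_{t-1}^T (A_t^{-1} - A_{t-1}^{-1}) b_{t-1} = -c^2/(1+\gamma)$, whose sum is $-c^2 \gamma/(1+\gamma)^2 \le 0$.

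Summing the per-round bound over $t \in [T]$ telescopes the potential and leaves $\sum_{t} (\hat z_t - z_t)^2 - \Phi_T \le \sum_{t} z_t^2\, x_t^T A_t^{-1} x_t \le \|z\|_\infty^2 \sum_{t} x_t^T A_t^{-1} x_t$. To control the last sum I would invoke the standard log-determinant lemma: since $\det A_t = \det A_{t-1}\,(1 + \gamma)$ and $x_t^T A_t^{-1} x_t = \gamma/(1+\gamma) \le \log(1+\gamma)$, summing gives $\sum_{t} x_t^T A_t^{-1} x_t \le \log(\det A_T / \det A_0) = \log \det A_T$. Finally, AM--GM applied to the eigenvalues of $A_T$ (equivalently $\det A_T \le (\mathrm{tr}\, A_T / d)^d$), together with $\mathrm{tr}\, A_T = d + \sum_{s\in[T]} \|x_s\|^2$, gives $\log \det A_T \le d \log\big(1 + \frac{1}{d} \sum_{s\in[T]} \|x_s\|^2\big) \le d \log\big(1 + T \max_{t\in[T]}\|x_t\|^2 / d\big)$. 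Chaining these inequalities produces exactly~\eqref{eq:online_regr_regret}. (Alternatively one may simply cite~\cite[pp.~38--40]{rakhlin2012statistical} or the corresponding development in~\cite{cesa2006prediction}, but the route above is short and self-contained.)
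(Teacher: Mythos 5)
Your proof is correct and complete: the ridge-potential identity, the Sherman--Morrison cancellation showing the residual terms are nonpositive, and the log-determinant/AM--GM bound all check out. The paper offers no proof of its own for this proposition --- it simply cites \cite[pp.~38--40]{rakhlin2012statistical} --- and your argument is exactly the standard potential-function analysis given there, so the two routes coincide.
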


Using similar arguments as in \eqref{eq:online_mean_regret_bnd}, but applying 
\eqref{eq:online_regr_regret} 
instead of \eqref{eq:online_mean_regret1} for bounding the regret, we get for any $P \in \mathcal P_{\mathsf 
T}$ and $\theta \in \Theta_P$,
\begin{equation*}
\overline{R}(\theta, P)	  \le C_{m, d} |P|\, (s_{m, \infty}(\theta)^2 + \sigma^2 \log \e N)  \log \e N
\end{equation*}
where $C_{m, d} > 1$ depends only on $m$ and $d$. The remaining part of the proof is similar to that of 
Theorem~\ref{thm:pcconst}.

\subsection{Proof of Theorem~\ref{thm:trendfilter_slow_rate}}
The proof requires, first of all, that  the class $\mathcal{BV}^{(m)}_{n}(V^*)$ is well-approximable by 
piecewise polynomial functions with degree at most $m-1$. To this end we present the following result 
which was proved in \cite{chatterjee2019adaptive} (see Proposition~8.9 in the arxiv version).
\begin{proposition}\label{prop:piecewise}
	Fix a positive integer $m > 1$ and $\theta \in \R^n$, and let $V^{m}(\theta) \coloneqq V$. For 
	any $\delta > 0$, there exists a partition $P_{\theta, m, \delta}$ (of $L_{1, n}$) in $\mathcal P_{\dpt}$ and 
	$\theta' \in \Theta_{P_{\theta, m,\delta}}(\{F_S: S \in \mathcal S\})$ such that \newline
	a) $|P_{\theta, m, \delta}| \leq C\, \delta^{-1/m}$ for an absolute constant $C$, 
	\newline
	b) $\|\theta - \theta^{'}\|_{\infty} \leq V \delta$, \mbox{ and} \newline
	c) $\max_{S \in P_{\theta, m, \delta},\, 0 \le j < m} \, n^{j} \beta_{j, S} \le C_m \max_{0 \le j < m} 
	n^{j}\|D^{j}(\theta)\|_{\infty}$ where $\theta' \equiv \beta_{0, S} + \ldots + \beta_{m-1, S} x^{m-1}$ on $S$ 
	and $C_m$ is a constant depending only on $m$.
\end{proposition}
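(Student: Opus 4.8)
The plan is to first prove a sharp \emph{local} polynomial approximation estimate on a single interval and then assemble the global partition by recursive dyadic bisection, refining an interval precisely until its local error drops below the target level $V\delta$. For the local estimate, fix an interval $I = \{a, a+1, \dots, b\} \subset [n]$ and take $p_I$ to be the degree-$(m-1)$ \emph{discrete Taylor polynomial} of $\theta$ at the left endpoint $a$, i.e. the Newton forward-difference interpolant determined by
$$ p_I(a + k) = \sum_{j=0}^{m-1} \binom{k}{j}\, \big(D^{(j)}\theta\big)_a. $$
The Newton remainder formula writes $\theta_{a+k} - p_I(a+k)$ as a discrete convolution of $D^{(m)}\theta$ against binomial weights $\binom{\cdot}{m-1}$, and since $\binom{k-1}{m-1} \le |I|^{m-1}$ this yields, with $V_I := n^{m-1}|D^{(m)}(\theta_I)|_1$ denoting the local $m$-th order variation, the bound
$$ \|\theta_I - p_I\|_\infty \le |I|^{m-1} |D^{(m)}(\theta_I)|_1 = (|I|/n)^{m-1}\, V_I $$
with an absolute constant. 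Note also that $p_I$ interpolates $\theta$ exactly as soon as $|I| \le m$, so its error vanishes on short intervals.

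Next I would construct $P_{\theta, m, \delta}$ by recursive dyadic partitioning: starting from $[1,n]$, split a dyadic interval $I$ into its two dyadic halves whenever $\|\theta_I - p_I\|_\infty > V\delta$. Because the error vanishes once $|I| \le m$, every branch terminates, the leaves form a partition in $\mathcal P_{\dpt}$, and setting $\theta' := p_I$ on each leaf $I$ gives $\|\theta - \theta'\|_\infty \le V\delta$, which is item (b). For item (a), I bound the number of leaves by (number of internal nodes)$+1$ and count internal nodes scale by scale. An interval of length $L$ is split only if $(L/n)^{m-1} V_I > V\delta$, i.e. $V_I > V\delta\,(n/L)^{m-1}$; since the index sets of $D^{(m)}(\theta_I)$ over disjoint intervals at a common scale are disjoint, one has $\sum_{|I| = L} V_I \le V$, so the number of split intervals at scale $L$ is at most $\min\{\,n/L,\ \delta^{-1}(L/n)^{m-1}\,\}$. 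Writing $x = L/n$, the two competing bounds cross at $x = \delta^{1/m}$, and summing $\min\{1/x,\ \delta^{-1}x^{m-1}\}$ over dyadic scales gives, on each side of the crossover, a geometric series dominated by its value at $x \asymp \delta^{1/m}$, namely a constant multiple of $\delta^{-1/m}$ (the ratios $2^{-1}$ and $2^{-(m-1)}$ keep the series summable for all $m > 1$). This produces $|P_{\theta, m, \delta}| \le C\, \delta^{-1/m}$ with an absolute constant $C$.

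It then remains to verify the coefficient control in item (c), which is where the constant $C_m$ enters. The key observation is that the entire domain has unit length in the rescaled variable $t = x/n \in (0,1]$, so that although the coefficients $\beta_{j,S}$ are read off in the \emph{global} monomial basis, expressing the polynomial $p_I$ in that basis amounts to extrapolating it from its subinterval back to the origin over a distance at most $1$. The continuous $t$-derivatives $\partial_t^j p_I$ are fixed linear combinations of the scaled differences $n^k (D^{(k)}\theta)_a$, $k < m$, with combinatorial coefficients depending only on $m$ and on $|I|/n \le 1$; hence each is bounded by $C_m \max_{k<m} n^k \|D^{(k)}(\theta)\|_\infty$. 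Taylor-expanding $p_I$ about $t = 0$ over the unit-length domain then shows the monomial coefficients, i.e. the quantities $n^j \beta_{j,S}$, obey the same bound, giving (c).

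The hard part, in my view, is precisely item (c): one must exhibit a \emph{single} explicit local polynomial that simultaneously delivers the approximation error needed for (a) and (b) \emph{and} has controlled coefficients in the global basis. The best $\ell^\infty$ approximant would serve for (b) but leaves its coefficients opaque, while a Taylor polynomial centered far from the origin could a priori have large global monomial coefficients. The discrete Taylor (Newton) polynomial resolves both demands at once, and the coefficient estimate ultimately rests on the elementary but essential fact that extrapolation distances never exceed the unit length of the rescaled domain; the remaining bookkeeping of binomial constants is routine and is what gets absorbed into $C_m$.
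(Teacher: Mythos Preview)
The paper does not supply its own proof of this proposition; it simply cites Proposition~8.9 in the arXiv version of \cite{chatterjee2019adaptive}. Your proposal is a correct, self-contained argument and follows what is essentially the standard route to such results: a local Newton--remainder bound, recursive dyadic bisection with the stopping rule $\|\theta_I - p_I\|_\infty \le V\delta$, and a scale-by-scale count of split nodes via the subadditivity $\sum_{|I|=L} V_I \le V$. The observation that the two competing bounds $n/L$ and $\delta^{-1}(L/n)^{m-1}$ yield geometric series on either side of the crossover $L/n \asymp \delta^{1/m}$, with ratios $1/2$ and $2^{-(m-1)} \le 1/2$, correctly delivers an absolute constant in (a). Your treatment of (c) is also sound: writing $p_I$ first in the shifted basis $\{(x-a)^\ell\}$ with coefficients controlled by $C_m \max_{j<m} n^{j}\|D^{(j)}\theta\|_\infty$ (via Stirling-type constants), and then re-expanding in $\{x^k\}$, the factors $(a/n)^{\ell-k} \le 1$ absorb the shift at the cost of a further $m$-dependent constant --- exactly your ``unit rescaled length'' point. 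This is almost certainly the same argument as in the cited reference; the only places requiring a moment's care are the boundary case $|I|<m$ near the right edge of $[1,n]$ (where one interpolates on $|I|$ nodes and sets higher coefficients to zero) and the fact that the index sets of $D^{(m)}(\theta_I)$ across disjoint $I$ omit the ``straddling'' differences, which only helps the inequality $\sum_I V_I \le V$.
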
 
Proposition~\ref{prop:piecewise} immediately gives us
\begin{equation}\label{eq:piecewise_poly_highTVd=1}
	\|\theta - \Pi_{\Theta_{P_{\theta, m, \delta}}}\theta\|^2 \le N V^2 \delta^2.
\end{equation}
(recall that $N = n$ since $d = 1$). 

\begin{proof}[Proof of Theorem~\ref{thm:trendfilter_slow_rate}]
	Given any $\delta > 0$, let $\theta'_\delta$ denote the vector given by 
	Proposition~\ref{prop:piecewise} for $\theta = \theta^*_{\mathsf T}$. Then from 
	Proposition~\ref{prop:online_regress_regret} and the item~c) in 
	Proposition~\ref{prop:piecewise}, we get, for some constant $C_m$ depending only on $m$,
\begin{align}\label{eq:expected_regret_bnd_trendfilter}
	\overline {\mathcal R}(\theta'_{\delta}, P_{\theta, m, \delta}) &\le C_m|P_{\theta, m, \delta}|\,(\, \|\theta^*_{\mathsf T}\|_{m-1, \infty}^2 + \E \|y_{\infty}\|^2 \log \e N \,) \nonumber \\ 
	&\le C_m|P_{\theta, m, \delta}|\, (\|\theta^*_{\mathsf T}\|_{m-1, \infty}^2 + \sigma^2 \log \e N)  \log \e N
\end{align}
	where the last step follows from a similar computation as in \eqref{eq:online_mean_regret_bnd} (observe 
	that $\|\theta^*_{\mathsf T}\|_{m-1, \infty} \ge \|\theta^*_{\mathsf T}\|_{\infty}$). Now, since $\log 
	|\mathcal S| = \log 2 N$ (see around \eqref{eq:no_of_expert}), we get by 
	plugging the bounds from \eqref{eq:piecewise_poly_highTVd=1}, item~(a) in 
	Proposition~\ref{prop:piecewise} and \eqref{eq:expected_regret_bnd_trendfilter} into 
	\eqref{eq:generic_online2} that for any $\lambda  \ge C (\sigma \sqrt{\log N} \vee\|\theta^*_{\mathsf 
		T}\|_{\infty})$,
	\begin{align*}
		\E \|\hat \theta^{OL}_{\mathsf T} - \theta_{\mathsf T}^* \|^2  \le \, \inf_{\delta > 0} \, C&\big( |\mathsf T| (V_{\mathsf T}^*)^2 \delta^2 +  C_m \delta^{-1/m}\, (\lambda + \|\theta^*_{\mathsf T}\|_{m-1, \infty})^2\log \e N\big) + \frac{\sigma^2 + \lambda^2}{|\mathsf T|}
	\end{align*} where $C_m > 0$ depends only on $m$. Now putting $$\delta = C_m^{\frac{m}{2m + 
			1}}(\lambda + \|\theta^*_{\mathsf T}\|_{m-1, \infty})^{\frac{2m}{2m + 1}}(V_{\mathsf T}^*)^{-\frac{2m}{2m + 1}} (\log \e N)^{\frac{m}{2m + 1}} |\mathsf T|^{-\frac{m}{2m+1}}$$ in the above display, we obtain \eqref{eq:trendfilter_slow_rate_d=1}.
\end{proof}

\bibliographystyle{chicago}
\def\noopsort#1{}

\end{document}